\newtheorem{theorem}{Theorem}[section]
\newtheorem{proposition}[theorem]{Proposition}
\newtheorem{lemma}[theorem]{Lemma}
\newtheorem{corollary}[theorem]{Corollary}
\newtheorem{question}[theorem]{Question}
\theoremstyle{definition}
\newtheorem{definition}[theorem]{Definition}
\theoremstyle{remark}
\newtheorem{remark}[theorem]{Remark}
\newtheorem*{ack}{Acknowledgments}
\numberwithin{equation}{section}
\providecommand{\Griff}{\mathop{\rm Griff}\nolimits}
\providecommand{\MU}{\mathop{\rm MU}\nolimits}
\renewcommand{\P}{{\mathbb P}}
\newcommand{\CH}{{\rm CH}}
\newcommand{\inj}{\hookrightarrow}
\newcommand{\Pic}{{\rm Pic}}
\newcommand{\Div}{{\rm Div}}
\newcommand{\Spec}{{\rm Spec \,}}
\newcommand{\NS}{{\operatorname{NS}}}
\newcommand{\Sch}{{\operatorname{\mathbf{Sch}}}}
\newcommand{\Sm}{{\mathbf{Sm}}}
\newcommand{\ds}{{/\kern-3pt/}}
\newcommand{\Tor}{{\operatorname{Tor}}}
\newcommand{\colim}{\mathop{\text{colim}}}
\newcommand{\tr}{{\operatorname{tr}}}
\renewcommand{\dim}{\text{\rm dim}}
\newcommand{\tuborg}{\left\{\begin{array}{ll}}
\newcommand{\sluttuborg}{\end{array}\right.}
\begin{document}

\title{Algebraic cobordism theory attached to algebraic equivalence}
\author{Amalendu KRISHNA and Jinhyun PARK}
\address{School of Mathematics, Tata Institute of Fundamental Research,  
1 Homi Bhabha Road, Colaba, Mumbai, 400 005, India}
\email{amal@math.tifr.res.in}
\address{Department of Mathematical Sciences, KAIST, 291 Daehak-ro, Yuseong-gu,
Daejeon, 305-701, Republic of Korea (South)}
\email{jinhyun@mathsci.kaist.ac.kr; jinhyun@kaist.edu}

\keywords{cobordism, Chow group, K-theory, algebraic cycle, Griffiths group}

\begin{abstract}Based on the algebraic cobordism theory of Levine and Morel, we develop a theory of algebraic cobordism modulo algebraic equivalence.

We prove that this theory can reproduce Chow groups modulo algebraic equivalence and the semi-topological $K_0$-groups. We also show that with finite coefficients, this theory agrees with the algebraic cobordism theory.

We compute our cobordism theory for some low dimensional varieties. The results on infinite generation of some Griffiths groups by Clemens and on smash-nilpotence by Voevodsky and Voisin are also lifted and reinterpreted in terms of this cobordism theory.

\end{abstract}

\subjclass[2010]{Primary 14F43; Secondary 55N22}

\maketitle


\section{Introduction}
The theory of algebraic cobordism  $\Omega^*$ was developed by Levine and Morel \cite{LM}. This theory is modelled on the geometric description of complex cobordism theory $\MU^*$ by Quillen \cite{Quillen}. The most interesting aspect of the algebraic cobordism theory is that it is universal among the oriented cohomology theories on smooth algebraic varieties. One of the consequences of this universality is that algebraic cobordism contains enough data to reproduce Chow groups and Grothendieck groups of algebraic varieties, as shown in \cite[Theorem 4.5.1, Corollary 4.2.12]{LM} (see also \cite{Dai}). This is in contrast with the topological situation where the natural map $\MU^* (X) \otimes_{\mathbb{L}^*} \mathbb{Z} \to H^* (X, \mathbb{Z})$ is known to be not an isomorphism in general for a CW-complex $X$ (see \cite[Theorem 2.2]{Totaro}).

More recently, Levine and Pandharipande \cite{LP} defined the double-point cobordism theory $\omega_*$ and showed that this is isomorphic to the Levine-Morel algebraic cobordism theory $\Omega_*$. As a consequence, the artificially imposed formal group law in the Levine-Morel definition of $\Omega_*$ gets a geometric interpretation. For a scheme $X$ over a field $k$, $\omega_*(X)$ is defined in terms of cobordism cycles over $X$ and the equivalence relation between these cobordism cycles is given in terms of a family of smooth cobordism cycles over $X \times \P^{1}$ which degenerate to a simple normal crossing of two smooth divisors in the family. The precise definition will be recalled below. It was remarked by Levine and Pandharipande (see \cite[\S~11.2]{LP}) that a double point cobordism theory based on algebraic equivalence should exist if one considers families of cobordism cycles parameterized by more general curves than just $\P^1$.

This leads one to the following natural question: is there  a theory of algebraic cobordism based on the Levine-Morel model, which reflects  \emph{algebraic equivalence}, reproduces Chow groups modulo algebraic equivalence and the semi-topological Grothendieck group, and more generally, interpolates between the algebraic and the complex cobordism? The goal of this paper is to develop such a theory. We go on to show that this new cobordism theory based on algebraic equivalence, also recovers the one suggested by Levine-Pandharipande in \emph{op.cit}. This was one of our motivations which led to the genesis of this paper.  

We show that the algebraic cobordism theory $\Omega_*^{\rm alg}$ interpolates between the algebraic and the complex cobordism in much the same way the semi-topological $K$-theory of Friedlander, Lawson and Walker (see \cite{FL}, \cite{FW1}, \cite{FW}) interpolates between the algebraic and the topological $K$-theories. We compute $\Omega_*^{\rm alg}$ for curves and surfaces and show that they are finitely generated modules over the Lazard ring. This is in contrast with the corresponding situation in algebraic cobordism. We further show that $\Omega_*^{\rm alg}$ agrees with $\Omega_*$ with finite coefficients. Our hope is that the functor $\Omega_*^{\rm alg}$ will inherit the properties of algebraic as well as the complex cobordism. As a consequence, this may be better suited for the study of complex algebraic varieties.  

Let $k$ always denote a fixed base field of characteristic zero throughout the paper. A \emph{scheme} in this paper will mean a separated scheme of finite type over $k$. We shall denote the category of schemes by $\Sch_k$. The full subcategory of smooth quasi-projective schemes over $k$ will be denoted by $\Sm_k$. In this paper, an oriented Borel-Moore homology theory on $\Sch_k$ and an oriented cohomology theory on $\Sm_k$ will mean the ones considered in \cite[Definitions 5.1.3, 1.1.2]{LM}.  Let $\mathbb{L}_*$ denote the Lazard ring (see \cite[p. 4]{LM}).

The central results of this paper can be summarized as follows:


\begin{theorem}\label{thm:intro main}  On the category $\Sch_k$, there are two isomorphic algebraic cobordism theories attached to algebraic equivalence : $\Omega_* ^{\rm alg}$ in Definition \ref{definition of sim cobordism} and $\omega_* ^{\rm alg}$ in Definition \ref{def:algebraic dp-cobordism}, 
which satisfy the following properties:

\emph{(1)} The functor $\Omega_* ^{\rm alg}$ defines an oriented Borel-Moore homology theory on $\Sch_k$ that respects algebraic equivalence in the sense of Definition \ref{def:respects alg equiv}. The restriction $\Omega^* _{\rm alg}$ on the subcategory $\Sm_k$, with the cohomological indexing in Definition \ref{definition of sim cobordism}, defines an oriented cohomology theory that respects algebraic equivalence in the sense of Definition \ref{def:respects alg equiv}.

\emph{(2)} $\Omega_* ^{\rm alg}$ satisfies the localization property, the $\mathbb{A}^1$-homotopy invariance and the projective bundle formula.

\emph{(3)} $\Omega^* _{\rm alg}$ is universal among all oriented cohomology theories on $\Sm_k$ that respect algebraic equivalence. Similarly, $\Omega_* ^{\rm alg}$ is universal among all oriented Borel-Moore homology theories on $\Sch_k$ that respect algebraic equivalence. 

\emph{(4)} For $X \in \Sch_k$, $\Omega_* ^{\rm alg} (X) \otimes_{\mathbb{L}_*} \mathbb{Z} \simeq \CH_* ^{\rm alg} (X)$ and $\Omega_*^{\rm alg} (X) \otimes _{\mathbb{L}_*} \mathbb{Z}[\beta, \beta^{-1}] \simeq G_0 ^{\rm semi} (X) [ \beta, \beta^{-1}]$, where $\CH_*^{\rm alg}$ is the Chow group modulo algebraic equivalence, $G_0 ^{\rm semi}$ is the semi-topological Grothendieck group of coherent sheaves, and $\beta$ is a formal symbol of degree $-1$.

\emph{(5)} For $X \in \Sch_k$, $\Omega_* (X) \otimes _{\mathbb{Z}} \mathbb{Z}/m \overset{\simeq}{\to} \Omega_* ^{\rm alg} (X) \otimes_{\mathbb{Z}} \mathbb{Z}/m$.

\end{theorem}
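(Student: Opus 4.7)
The plan is to construct $\Omega_*^{\rm alg}$ as the quotient of the Levine--Morel cobordism $\Omega_*$ by the subgroup generated by the algebraic equivalence relations between cobordism cycles, and then to deduce each clause of the theorem from the corresponding property of $\Omega_*$ together with a careful analysis of this subgroup. The central technical point is that the relation of algebraic equivalence on cobordism cycles is stable under all the structure operations of $\Omega_*$ (projective pushforward, smooth and refined pullback, Chern class operations, and external products), which implies that the quotient automatically inherits the structure of an oriented Borel--Moore homology theory.

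For clauses (1) and (3), the verification is essentially formal once stability is established: $\Omega_*^{\rm alg}$ becomes an oriented Borel--Moore homology theory respecting algebraic equivalence, and universality among such theories is inherited from the Levine--Morel universality of $\Omega_*$, since any oriented theory respecting algebraic equivalence must factor the canonical map from $\Omega_*$ through the relations we have quotiented by. Clause (2) is then also inherited from $\Omega_*$: the $\mathbb{A}^1$-homotopy invariance and the projective bundle formula descend because the isomorphisms of $\Omega_*$ manifestly send algebraic equivalence to algebraic equivalence, while localization requires a diagram chase verifying that an algebraic equivalence between cobordism cycles on an open subscheme lifts modulo contributions from the closed complement.

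For clause (4), the strategy is to combine the Levine--Morel comparison isomorphisms
\[
\Omega_*(X) \otimes_{\mathbb{L}_*} \Z \simeq \CH_*(X), \qquad \Omega_*(X) \otimes_{\mathbb{L}_*} \Z[\beta,\beta^{-1}] \simeq K_0(X)[\beta,\beta^{-1}]
\]
with the fact that $\CH_*^{\rm alg}$ and $G_0^{\rm semi}$ are themselves the universal quotients of $\CH_*$ and $K_0[\beta,\beta^{-1}]$ by algebraic equivalence. Under the Levine--Morel isomorphisms, algebraic equivalence of cobordism cycles is transported to algebraic equivalence of cycle classes and $K$-theory classes respectively, so both sides acquire the same universal property with respect to the algebraic equivalence relation and must therefore agree.

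Clause (5) is the most delicate and is where I expect the main obstacle to lie. The goal is to show that the kernel $R_*(X)$ of the surjection $\Omega_*(X) \surj \Omega_*^{\rm alg}(X)$ is a divisible subgroup of $\Omega_*(X)$; once this is in hand, the short exact sequence tensored with $\Z/m$ immediately yields the required isomorphism. A typical generator of $R_*(X)$ has the shape $[Y_{c_0} \to X] - [Y_{c_1} \to X]$, where $Y \to C \times X$ is a smooth family witnessing the equivalence, $C$ is a smooth projective curve, and $c_0, c_1 \in C$ are marked points. The plan is to exploit divisibility of $\Pic^0(C)$: writing $[c_0]-[c_1] = m \cdot D$ for some degree-zero divisor class $D$ on $C$, possibly after a finite base extension and a norm argument to descend from $\bar k$ to $k$, and then pulling the family back along the corresponding $m$-sheeted correspondence to exhibit $[Y_{c_0}]-[Y_{c_1}]$ as $m$ times an element of $R_*(X)$ modulo further algebraic equivalences.
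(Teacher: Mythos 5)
Your proposal never engages with half of the statement and, more importantly, with the paper's one key device. The theorem asserts the existence of \emph{two isomorphic} theories, $\Omega_*^{\rm alg}$ and the double-point theory $\omega_*^{\rm alg}$ built from degenerations over arbitrary smooth projective curves; you never construct $\omega_*^{\rm alg}$ nor prove the comparison, so that assertion is simply not addressed. The engine behind everything else in the paper is the basic exact sequence
$\bigoplus_{(C,t_1,t_2)}\Omega_*(X\times C)\xrightarrow{\,i_1^*-i_2^*\,}\Omega_*(X)\to\Omega^{\rm alg}_*(X)\to 0$
(Theorems \ref{thm:FES}, \ref{thm:FES-ST}), proved by transversality when $X$ is smooth and, for general $X$, by resolution of singularities, the blow-up formula and induction on dimension. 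Your ``formal descent'' claims need exactly this presentation of the kernel $R_*(X)$. For $\mathbb{A}^1$-invariance and the projective bundle formula it is not enough that the isomorphisms for $\Omega_*$ ``send algebraic equivalence to algebraic equivalence'': the inclusion $p^*(R_*(X))\subseteq R_*(V)$ is clear, but you need $R_*(V)\subseteq p^*(R_*(X))$, and a generator $[Y\to V,L]-[Y\to V,M]$ has no reason to descend to $X$, while the inverse of $p^*$ is not a geometric operation. The paper gets this because the kernel is presented as the image of $i_1^*-i_2^*$ from $X\times C$, which commutes with $p^*$ and maps isomorphically. Likewise, the oriented Borel--Moore structure in (1) is not ``automatic'' from stability of the relation under the listed operations: the l.c.i./Gysin pullbacks on the quotient are built via deformation to the normal cone, and that construction already uses localization and homotopy invariance of $\Omega_*^{\rm alg}$, plus compatibility of pseudo-divisor intersection with the relation (Lemma \ref{lem:Int-ps-div}); your asserted stability under ``refined pullback'' is precisely what must be proved, and the paper's logical order is (2) before (1) and (3) for this reason.

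On (4), your ``transport of universal properties'' becomes the paper's proof only after it is phrased through compatible presentations (the exact sequence above, the corresponding sequence for $\CH^{\rm alg}_*$, and the very definition of $G_0^{\rm semi}$) and right-exactness of $-\otimes_{\mathbb{L}_*}\mathbb{Z}$, together with the Levine--Morel comparison and, for singular $X$, Dai's result for $G_0$ (not $K_0$, as you write). On (5), your basic idea --- divisibility of the kernel via $\Pic^0(C)$ --- is the paper's, but your reduction of a kernel generator to a fiber difference $[Y_{c_0}\to X]-[Y_{c_1}\to X]$ of a smooth family is not available in your setup: over singular $X$ no transversality is possible, and in the $\Omega$-presentation the generators carry line bundles. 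The paper passes to the double-point picture, where a relation class equals $f_*\pi^*\bigl([\{\zeta\}\to C]-[\{p\}\to C]\bigr)$ by the strict normal crossing divisor formula \eqref{adpc equation}, and then uses $\omega_0(C)\simeq\CH_0(C)$. Your concluding step, exhibiting the generator as ``$m$ times an element of $R_*(X)$ modulo further algebraic equivalences,'' is circular as stated: what is needed is $R_*(X)\subseteq m\,\Omega_*(X)$. Finally, your instinct about non-closed $k$ is reasonable, but the norm trick only shows that $[k':k]\bigl([t_1]-[t_2]\bigr)$ is divisible by $m$, so it does not obviously close that point; the paper's own proof invokes divisibility of $\Pic^0(C)(k)$ directly.
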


\begin{theorem}\label{thm:intro computation}Let $X \in \Sch_k$ and let $\mathbb{L}^*$ be the Lazard ring with the cohomological indexing. 

\emph{(1)} For $X$ smooth over $\mathbb{C}$, there is a cycle class map $\Omega^* _{\rm alg}(X) \to \MU^{2*} (X(\mathbb{C}))$.

\emph{(2)} $\mathbb{L}^* \simeq \Omega^* (k) \simeq \Omega^* _{\rm alg} (k).$ Furthermore, $\Omega_* ^{\rm alg}$ is generically constant in the sense of 
\cite[Definition 4.1.1]{LM}. 

\emph{(3)} If $X$ is a cellular scheme, then $\Omega_* (X) \overset{\simeq}{\to} \Omega_* ^{\rm alg} (X)$ as free $\mathbb{L}_*$-modules.

\emph{(4)} When $X$ is smooth, the $\mathbb{L}^*$-module $\Omega_{\rm alg} ^* (X)$ is finitely generated if and only if the group $\CH^* _{\rm alg} (X)$ is finitely generated. When $X$ is smooth projective over $\mathbb{C}$, the $\mathbb{L}^*$-module $\Omega^* _{\rm alg}(X)$ is finitely generated if and only if the Griffiths group $\Griff^* (X)$ is finitely generated. If $\dim X \leq 2$, the $\mathbb{L}^*$-module $\Omega_{\rm alg} ^* (X)$ is finitely generated, and it is not necessarily finitely generated otherwise.

\emph{(5)} If $X$ is a connected smooth affine curve, then $\mathbb{L}^* \simeq \Omega_{\rm alg} ^* (X)$. If $X$ is a smooth curve over $ \mathbb{C}$, then $\Omega^* _{\rm alg} (X)  \overset{\simeq}{\to} \MU^{2*} (X(\mathbb{C}))$ and an
analogue of Quillen-Lichtenbaum conjecture holds:
\[
\Omega^* (X) \otimes_{\mathbb{Z}} \mathbb{Z}/m \overset{\simeq}{\to} \MU^{2*} (X(\mathbb{C})) \otimes_{\mathbb{Z}} \mathbb{Z}/m.
\]

\end{theorem}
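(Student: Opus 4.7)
The plan is to address the five parts in order, leaning on the universality statement Theorem~\ref{thm:intro main}(3), the coefficient identifications in Theorem~\ref{thm:intro main}(4), and the standard localization/homotopy/bundle toolkit of Theorem~\ref{thm:intro main}(2).

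For (1), I would first check that the functor $X\mapsto \MU^{2*}(X(\mathbb{C}))$ on $\Sm_{\mathbb{C}}$ is an oriented cohomology theory that respects algebraic equivalence: two closed fibers of a smooth projective family parameterized by a smooth connected curve $C$ are $\MU$-cobordant, since their difference bounds after pulling the family back along a real path joining the parameter points in $C(\mathbb{C})$. The desired cycle class map then comes from the universality in Theorem~\ref{thm:intro main}(3). For (2), the identification $\mathbb{L}^*\simeq \Omega^*(k)$ is \cite[Theorem~1.2.7]{LM}, and the canonical surjection $\Omega^*(k)\twoheadrightarrow\Omega^*_{\rm alg}(k)$ is injective because every smooth connected parameter curve maps constantly to $\Spec k$, introducing no non-trivial algebraic equivalence at a point; generic constancy follows by applying the same observation at the generic point of any integral smooth $X$, together with the localization sequence. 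For (3), on a cellular $X$ the Chow group is torsion-free and algebraic equivalence coincides with rational equivalence, so Theorem~\ref{thm:intro main}(4) shows that the canonical map $\Omega_*(X)\twoheadrightarrow \Omega^{\rm alg}_*(X)$ becomes an isomorphism after reduction modulo the augmentation ideal; since $\Omega_*(X)$ is a free $\mathbb{L}_*$-module on the cell classes, a graded Nakayama argument lifts this to an isomorphism.

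The main obstacle is (4). The forward implication in each equivalence is immediate from Theorem~\ref{thm:intro main}(4), because the reduction of a finitely generated $\mathbb{L}^*$-module modulo the augmentation ideal is finitely generated. For the converse, I would introduce the topological filtration on $\Omega^*_{\rm alg}(X)$ by codimension of support; its associated graded is controlled via the localization sequence of Theorem~\ref{thm:intro main}(2) and iterated application of Theorem~\ref{thm:intro main}(4), and these graded pieces become finite $\mathbb{L}^*$-modules as soon as $\CH^*_{\rm alg}$ is finitely generated, so finite generation propagates up the finite codimension filtration on a finite-dimensional $X$. Over $\mathbb{C}$, the image of the cycle class map $\CH^*_{\rm alg}(X)\to H^{2*}(X(\mathbb{C}),\mathbb{Z})$ lies in a finitely generated abelian group with kernel $\Griff^*(X)$, giving the Griffiths group equivalence. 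In dimensions $\le 2$, finite generation of $\CH^*_{\rm alg}$ is classical (by Mordell-Weil applied to the Picard scheme for curves, and by finiteness of the Neron-Severi group together with the Albanese-kernel analysis for surfaces), whereas Clemens' infinitely generated Griffiths group on a general quintic threefold supplies the counterexample in higher dimensions.

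For (5), a smooth affine curve $X$ has $\CH^{\rm alg}_*(X)=\mathbb{Z}$ concentrated in top degree, because any two closed points are algebraically equivalent on $X$ itself, and the filtration argument from (4) then gives $\Omega^*_{\rm alg}(X)\simeq\mathbb{L}^*$. For a smooth curve $X$ over $\mathbb{C}$, both $\Omega^*_{\rm alg}(X)$ and $\MU^{2*}(X(\mathbb{C}))$ are finite $\mathbb{L}^*$-modules generated by the fundamental class and, in the projective case, a point class, with relations determined in both cases by $\CH^{\rm alg}_*(X)\simeq H^{2*}(X(\mathbb{C}),\mathbb{Z})$ for curves, so the cycle class map of (1) sends generators to generators and is an isomorphism. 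The Quillen-Lichtenbaum analogue then follows by combining this isomorphism with Theorem~\ref{thm:intro main}(5).
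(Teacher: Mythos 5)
Parts of your outline do match the paper: (1) is proved essentially as in Proposition \ref{prop:STCC} (universality of $\Omega^*_{\rm alg}$ plus contractibility of a real path in $C(\mathbb{C})$ --- although what must be verified is $\tilde{c}_1(L_1)=\tilde{c}_1(L_2)$ on $\MU^*$ for algebraically equivalent line bundles, not that fibres of a family are $\MU$-cobordant), and the Griffiths-group equivalence via $0 \to \Griff^*(X) \to \CH^*_{\rm alg}(X) \to \CH^*_{\rm hom}(X) \to 0$, the Clemens example, and the deduction of the mod-$m$ statement in (5) from Theorem \ref{thm:FC} are the paper's arguments. The first genuine gap is in (2). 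Injectivity of $\mathbb{L}^*\to\Omega^*_{\rm alg}(k)$ is not because ``parameter curves map constantly to $\Spec k$'': by Proposition \ref{prop:Alg-st} (equivalently Theorem \ref{thm:FES-ST}) the kernel of $\Omega^*(k)\to\Omega^*_{\rm alg}(k)$ is generated by differences $[Y\to \Spec k, L]-[Y\to \Spec k, M]$ with $L\sim_{\rm alg}M$ on the smooth projective \emph{source} $Y$ (equivalently, by differences of smooth fibres of families over curves), and these are a priori nontrivial relations. The paper kills them rationally by using Levine--Pandharipande to reduce to $Y$ a product of projective spaces, where algebraic equivalence of line bundles is isomorphism, and then uses torsion-freeness of $\mathbb{L}^*$; generic constancy is then proved via the colimit over models and \cite[Corollary 4.4.3]{LM}, not by a localization argument.

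The second recurring gap is that you infer injectivity from graded Nakayama, which it cannot give: a surjection of graded $\mathbb{L}^*$-modules, bounded above in degree, from a free module, which becomes an isomorphism after $\otimes_{\mathbb{L}^*}\mathbb{Z}$, need not be injective (e.g.\ $\mathbb{L}^*\twoheadrightarrow \mathbb{L}^*/(a)$ for $a$ in the augmentation ideal). This breaks your proof of (3) --- the paper instead quotes the freeness theorem of Nenashev--Zainoulline, or alternatively induction on cells with localization, or comparison with $\MU$ over $\mathbb{C}$ --- and it breaks the isomorphism claim in (5): ``generators go to generators'' only yields surjectivity of $\Omega^*_{\rm alg}(X)\to\MU^{2*}(X(\mathbb{C}))$, whereas the paper obtains injectivity for a projective curve by comparing the localization sequence for a point $p\in X$ and its affine complement $U$ with the corresponding $\MU^{2*}$-sequence, the outer vertical maps being isomorphisms. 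In (4) the converse direction is precisely where Lemma \ref{lem:NAK} applies directly (lift a finite generating set of $\CH^*_{\rm alg}(X)$; it generates since $\Omega^n_{\rm alg}(X)$ vanishes for $n>\dim X$), so no topological filtration is needed; as written, your filtration sketch leaves its key step --- finite generation of the graded pieces --- unjustified. Finally, in (5) for an affine curve one needs every point class to vanish, i.e.\ $\Pic(X)/{\sim}=0$, which the paper deduces from the surjectivity of $\CH^0(Z)\to \Pic(\overline{X})/{\sim}$ for a smooth compactification; knowing only that differences of closed points are algebraically trivial is not enough, and the subsequent identification $\mathbb{L}^*\simeq \Omega^*_{\rm alg}(X)$ uses the generalized degree formula together with Proposition \ref{prop:Cob-point}, not the filtration machinery.
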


The following result of this paper is a cobordism analogue of a theorem of Voevodsky \cite{Voevodsky} and Voisin \cite{Voisin} about smash-nilpotence for algebraic cycles.

\begin{theorem}\label{thm:intro smash}Let $X$ be a smooth projective scheme and $\alpha$ be a cobordism cycle over $X$. If $\alpha$ vanishes  in $\Omega^* _{\rm alg}(X) _{\mathbb{Q}}$, then its smash-product $\alpha^{\boxtimes N}$ (see Definition \ref{def:smash-product}) vanishes in $\Omega^* (X^N)_{\mathbb{Q}}$ for some integer $N>0$. 
\end{theorem}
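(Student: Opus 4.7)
The plan is to adapt the classical Voevodsky--Voisin nilpotence argument for Chow groups to the cobordism setting, working always with rational coefficients and using the universality of $\Omega^*$ to import motivic decompositions.

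First I would unwind the definition of $\Omega^*_{\rm alg}$ to rephrase the hypothesis. If $\alpha$ vanishes in $\Omega^*_{\rm alg}(X)_{\Q}$, then modulo the Levine--Morel relations already present in $\Omega^*(X)_{\Q}$, the class $\alpha$ is a finite $\Q$-linear combination of elementary algebraically trivial differences $[Y_{c_0}\to X]-[Y_{c_1}\to X]$, where $Y\to C\times X$ is a smooth projective family over a smooth projective curve $C$ and $c_0,c_1\in C(k)$ are closed points. Smash-nilpotence is stable under finite $\Q$-linear combinations (at the cost of enlarging $N$ multiplicatively), so we may assume that $\alpha$ itself is one such elementary difference. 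In that situation the projection formula expresses $\alpha=\Phi_Y([c_0]-[c_1])$, where $\Phi_Y\colon \Omega_*(C)\to \Omega_{*+d}(X)$ is the cobordism correspondence obtained by pulling back from $C$, intersecting with the class of $Y$ on $C\times X$, and pushing forward to $X$. External product is compatible with correspondences, whence
\[
\alpha^{\boxtimes N} \;=\; \Phi_Y^{\boxtimes N}\bigl( ([c_0]-[c_1])^{\boxtimes N}\bigr) \quad \text{in } \Omega^*(X^N)_{\Q},
\]
so the proof reduces to showing that $([c_0]-[c_1])^{\boxtimes N}=0$ in $\Omega^*(C^N)_{\Q}$ for some $N>0$.

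For this key statement I would invoke the Voevodsky--Voisin nilpotence theorem, which yields $([c_0]-[c_1])^{\boxtimes N_0}=0$ in $\CH^*(C^{N_0})_{\Q}$ for some $N_0$, and then transport this vanishing to cobordism. The transport is via the Chow-motivic decomposition of a smooth projective curve into its $0$-, $1$-, and $2$-dimensional summands; by the universality of $\Omega^*$ on $\Sm_k$ this idempotent decomposition lifts to the corresponding category of cobordism motives. The class $[c_0]-[c_1]$ sits entirely inside the middle summand $\mathfrak{h}^1(C)$, whose endomorphism ring with rational coefficients is controlled by the Jacobian $J(C)$ through the Pontryagin product, so the vanishing in $\CH^*_{\Q}$ upgrades, via the standard idempotent/nilpotence calculus of correspondences, to a vanishing in $\Omega^*(C^N)_{\Q}$ for a possibly larger $N$.

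The main obstacle is precisely this transport step: one has to verify that the Jacobian/Pontryagin mechanism producing smash-nilpotence at the level of Chow groups genuinely lifts to $\Omega^*_{\Q}$ without being spoiled by the higher $\mathbb{L}_*$-coefficients carried by cobordism classes. Rational coefficients are essential here, just as in the classical case, since they are what enables the motivic decomposition of the curve and the splitting of idempotents. Once the base case over $C^N$ is in hand, the correspondence formula of the previous step propagates it to $\alpha^{\boxtimes N}$, and the initial reduction assembles the general case.
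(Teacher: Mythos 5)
Your reduction steps match the paper's proof in outline: using the identification $\Omega_*\simeq\omega_*$ and $\Omega_*^{\rm alg}\simeq\omega_*^{\rm alg}$ (Theorem \ref{thm:comparison}) together with the basic exact sequence (Theorem \ref{thm:FES}), one writes $\alpha$, up to finite sums handled by Lemma \ref{prep nilp}, as $(i_1^*-i_2^*)(\beta)$ for $\beta=[g\colon Y\to X\times C]$, and after a transversality move this becomes $\alpha=f_*\pi^*(\gamma)$ with $\gamma=[\{t_1\}\to C]-[\{t_2\}\to C]$, so that compatibility of push-forward, pull-back and external products (Theorem \ref{thm:lci prop}) reduces everything to showing $\gamma^{\boxtimes N}=0$ in $\Omega_*(C^N)_{\Q}$. (The paper phrases this as a pull--push $f_*\pi^*$ rather than your ``intersect with the class of $Y$'' correspondence, which sidesteps having to set up a refined product, but this is a cosmetic difference.)

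The genuine gap is precisely the step you yourself flag as ``the main obstacle'': transporting the Voevodsky--Voisin vanishing $([t_1]-[t_2])^{\otimes N}=0$ in $\CH_0(C^N)_{\Q}$ up to $\Omega_*(C^N)_{\Q}$. Your proposed route --- lifting the Chow--K\"unneth decomposition of the curve to cobordism motives and running a Jacobian/Pontryagin argument on the $h^1$-summand --- is left unverified, and it is exactly where the feared interference of the higher $\bL_*$-coefficients would have to be controlled; as written, the proof is not complete at its decisive point. The paper closes this step without any motivic machinery: the class $\gamma^{\boxtimes N}$ is a difference of classes of closed points, hence lies in the dimension-zero part $\omega_0(C^N)_{\Q}$, and by \cite[Lemma 4.5.10]{LM} the natural map $\omega_0(C^N)_{\Q}\to\CH_0(C^N)_{\Q}$ is an isomorphism compatible with external products. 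So the vanishing in $\CH_0(C^N)_{\Q}$ given by \cite[Corollary 3.2]{Voevodsky} (applied only to the curve $C$) immediately \emph{is} the vanishing in cobordism --- in degree zero there are no extra $\bL_*$-contributions to worry about. If you replace your motivic transport step by this degree-zero observation, your argument becomes the paper's proof.
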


The organization of this paper is as follows. A good part of the paper between \S~\ref{section:LM-cobordism} and \S~\ref{section:connection to cycle and K} is devoted to proving Theorem \ref{thm:intro main}. \S~\ref{section:LM-cobordism} recalls the definition of cobordism cycles from \cite{LM}, and that of algebraic equivalence. In \S~\ref{adq cobordism}, we define $\Omega^{\rm alg} _*$ in terms of the cobordism cycles of Levine-Morel modulo various relations, one of which reflects algebraic equivalence of line bundles. We prove a universal property of $\Omega_* ^{\rm alg}$. \S~\ref{adp cobordism} recalls the rational and algebraic double-point cobordism theories $\omega_*$ and $\omega_* ^{\rm alg}$ from \cite{LP}.

Our main step in proving many of the above results is the basic exact sequence of Theorem~\ref{thm:FES}. This sequence gives an explicit relation between $\omega_*(X)$ and $\omega_* ^{\rm alg}(X)$ for any $X \in \Sch_k$. This in particular allows us to prove the isomorphism between  $\Omega_* ^{\rm alg}(X)$ and $\omega_* ^{\rm alg}(X)$ and Theorem \ref{thm:intro main}(2) in \S~\ref{section:comparison}.  \S~\ref{section:OCT} finishes the proof of Theorem \ref{thm:intro main}(1)(3) and \S~\ref{section:connection to cycle and K} proves Theorem \ref{thm:intro main}(4)(5). In \S~\ref{sec:computation}, we compute $\Omega_* ^{\rm alg}$ from various angles and prove Theorem~\ref{thm:intro computation}. In \S~\ref{sec:smash-nil}, we discuss a cobordism analogue of smash-nilpotence of algebraically trivial algebraic cycles and prove Theorem~\ref{thm:intro smash}.

Some details related to Gysin pull-backs from \cite{LM} are placed in the Appendix (\S~\ref{section:appendix}), and there a new lemma related to $\Omega_* ^{\rm alg}$ is proven.

This paper builds on two grand works \cite{LM} and \cite{LP} on algebraic cobordism. Whenever necessary, we take the freedom of using the definitions and results of these references. In doing so, we will provide full reference details.
When no confusion arises, we shall use $\sim$ to mean algebraic equivalence.

\bigskip



\section{Cobordism cycles and algebraic equivalence}\label{section:LM-cobordism}
This section recalls the basic definitions in the theory of algebraic cobordism of Levine and Morel \cite{LM}. We also recall the notion of algebraic equivalence of vector bundles and algebraic cycles. These will be used in the construction of our cobordism theory in \S~\ref{adq cobordism}.

\subsection{Cobordism cycles}

Recall the following from \cite[Definition 2.1.6]{LM}: 

\begin{definition}\label{cobordism cycle LM}\label{defn:cob-cycle-LM} Let $X \in \Sch_k$ be of dimension $n \geq 0$. An \emph{integral cobordism cycle over $X$} is a collection $(f \colon Y \to X, L_1, \cdots, L_r)$, where $Y$ is smooth and integral, $f$ is projective, and $L_1, \cdots, L_r$ ($r \ge 0$) are line 
bundles on $Y$. Its dimension is defined to be $\dim (Y) - r \in \mathbb{Z}$. An \emph{isomorphism between two cobordism cycles} $(Y\to X, L_1, \cdots, L_r) \overset{\simeq}{\to} (Y' \to X, L_1 ', \cdots, L_{r'} ')$ is a triple $\Phi = (\phi \colon Y \to Y', \sigma, (\psi_1, \cdots, \psi_r))$ consisting of an isomorphism $\phi\colon Y \to Y'$ of $X$-schemes, a bijection $\sigma\colon \{ 1, \cdots, r \} \simeq \{ 1, \cdots, r'\}$, and isomorphisms $\psi_i \colon L_i \simeq \phi^* L' _{\sigma (i)}$ of lines bundles over $Y$ for all $i$. When $Y$ is a smooth scheme with several components, define $(Y \to X, L_1, \cdots, L_r)$ to be the sum of the obvious integral cobordism cycles corresponding to the components.

Let $\mathcal{Z}_* (X)$ be the free abelian group on the set of isomorphism classes of integral cobordism cycles over $X$. We let $\mathcal{Z}_d (X)$ be the subgroup generated by the dimension $d$ cobordism cycles.
The image of the integral cobordism cycle $(Y \to X, L_1, \cdots, L_r)$ in $\mathcal{Z}_* (X)$ is denoted by $[Y \to X, L_1, \cdots, L_r]$. 
When $X$ is smooth and equidimensional, the class $[{\rm Id} _X \colon X \to X ] \in \mathcal{Z}_d (X)$ is denoted by $1_X$. A cobordism cycle of the form $[{\rm Id}_X \colon X \to X, L_1, \cdots, L_r]$ is often written as $[X \to X, L_1, \cdots, L_r]$ when no confusion arises. Recall the following definitions from \cite[2.1.2, 2.1.3]{LM}:
\end{definition}

\begin{definition}\label{basic definitions for cobordism cycles}
(1)  For a projective morphism $g\colon X \to X'$ in $\Sch_k$, the \emph{push-forward along} $g$ is the graded group homomorphism $g_* \colon \mathcal{Z}_* (X) \to \mathcal{Z}_* (X')$ given by the composition with $g$, that is, $[f\colon Y \to X, L_1, \cdots, L_r] \mapsto [ g \circ f\colon Y \to X', L_1, \cdots, L_r]$.

(2)  For a smooth equidimensional morphism $g\colon X \to X'$ of relative dimension $d$, the \emph{pull-back along} $g$ is the homomorphism $g^* \colon \mathcal{Z}_* (X') \to \mathcal{Z}_{*+d} (X)$ given by sending $[f\colon Y \to X', L_1, \cdots, L_r]$ to $[pr_2 \colon Y \times _{X'} X \to X, pr_1 ^* (L_1), \cdots, pr_1 ^* (L_r)]$.

(3) Let $L$ be a line bundle on a scheme $X$. The \emph{first Chern class operator of} $L$ is defined to be the homomorphism $\tilde{c}_1 (L)\colon \mathcal{Z}_* (X) \to \mathcal{Z}_{*-1} (X)$ given by $[f\colon Y \to X, L_1, \cdots, L_r] \mapsto [f\colon Y \to X, L_1, \cdots, L_r, f^* (L)].$ If $X$ is smooth, we define the \emph{first Chern class $c_1 (L)$ of $L$} to be the cobordism cycle $c_1 (L)\colon=[{\rm Id}_X \colon X \to X, L]$.

(4) For $X, Y \in \Sch_k$, we define the \emph{external product}
\[
\times \colon \mathcal{Z}_* (X) \times \mathcal{Z}_* (Y) \to \mathcal{Z}_* (X \times Y)
\] 
by sending the pair $[f\colon X' \to X, L_1, \cdots, L_r]\times[g\colon Y' \to Y, M_1, \cdots, M_s]$ to
\[ 
[f \times g\colon X' \times Y' \to X \times Y, pr_1 ^* (L_1), \cdots, pr_1 ^* (L_r), pr_2 ^* (M_1), \cdots, pr_2 ^* (M_s)].
\]
\end{definition}

The functor $\mathcal{Z}_* (-)$ defines the universal oriented Borel-Moore functor on $\Sch_k$ with products in the sense of \cite[Definition 2.1.10]{LM}. This universality is based on the observation in [\emph{ibid.}, Remark 2.1.8] that, in $\mathcal{Z}_* (X)$ we have the equality $[f\colon Y \to X, L_1, \cdots, L_r] = f_* \circ \tilde{c}_1 (L_r) \circ \cdots \circ \tilde{c}_r (L_1) \circ \pi_Y ^* (1),$ where $\pi_Y\colon Y \to \Spec (k)$ is the structure map and $1 \colon= 1_{\Spec (k)}\in \mathcal{Z}_0 (k)$.

\subsection{Algebraic equivalence on vector bundles}\label{sec:algebraic equivalence defn} For algebraic cycles on schemes, the notion of algebraic equivalence was defined first in \cite{Samuel}. For $X \in \Sch_k$, we say that two algebraic cycles $Z_1$ and $Z_2$ on $X$ are \emph{algebraically equivalent}, if there exists a smooth projective connected curve $C$ and $k$-rational points $t_1, t_2$ on $C$ and a cycle $Z$ on $X \times C$ such that $Z |_{X \times \{ t_j \}} = Z_j$ for $j=1,2$. We refer to \cite[Chapter 10]{Fulton} for basic facts on algebraic equivalence of algebraic cycles. For vector bundles, we have a related notion. We say two vector bundles $E_1, E_2$ of finite rank on $X$ are \emph{algebraically equivalent}, if there is a smooth projective connected curve $C$, $k$-rational points $t_1, t_2$ on $C$, and a vector bundle $V$ on $X \times C$ such that $E_i \simeq  V | _{X \times \{ t_j\}}$ for $j= 1,2$. We use $\sim_{\rm alg}$ to mean both of the above notions on cycles and vector bundles.

We say that a vector bundle $E$ of rank $m$ on $X$ is \emph{algebraically trivial} if it is algebraically equivalent to the trivial bundle $O^{\oplus m}_X$. The following facts about algebraic equivalence of vector bundles and algebraic cycles will be useful in the sequel. 

\begin{lemma}\label{lem:trivial}
Two vector bundles $E_1$ and $E_2$ on a scheme $X$ are algebraically equivalent if and only if $E_1 \otimes L$ and $E_2 \otimes L$ are algebraically equivalent for every $L \in \Pic(X)$.
\end{lemma}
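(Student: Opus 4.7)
The statement is an equivalence, so I would treat the two directions separately. The reverse implication is immediate: assuming $E_1 \otimes L \sim_{\rm alg} E_2 \otimes L$ for every $L \in \Pic(X)$, just specialize to the trivial line bundle $L = \mathcal{O}_X$ to recover $E_1 \sim_{\rm alg} E_2$. So essentially all the content is in the forward direction.

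For the forward direction, my plan is to unwind the definition of algebraic equivalence of vector bundles and exhibit an explicit deformation. Suppose $E_1 \sim_{\rm alg} E_2$; then by definition there exist a smooth projective connected curve $C$, rational points $t_1, t_2 \in C(k)$, and a vector bundle $V$ on $X \times C$ such that $V|_{X \times \{t_j\}} \simeq E_j$ for $j = 1, 2$. Given $L \in \Pic(X)$, I would form the twisted bundle
\[
V' := V \otimes \mathrm{pr}_1^* L
\]
on $X \times C$, where $\mathrm{pr}_1 \colon X \times C \to X$ is the projection. This is a vector bundle on $X \times C$ of the same rank as $V$.

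The key computation is then the compatibility of tensor product with restriction: for each $j=1,2$ the inclusion $\iota_j \colon X \simeq X \times \{t_j\} \hookrightarrow X \times C$ factors $\mathrm{pr}_1 \circ \iota_j = \mathrm{id}_X$, hence
\[
V'|_{X \times \{t_j\}} \;\simeq\; V|_{X \times \{t_j\}} \otimes L \;\simeq\; E_j \otimes L.
\]
Thus $V'$ witnesses the algebraic equivalence $E_1 \otimes L \sim_{\rm alg} E_2 \otimes L$ with the same curve $C$ and the same base points $t_1, t_2$, completing the proof.

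I do not anticipate any serious obstacle here; the argument amounts to observing that pulling back $L$ along $\mathrm{pr}_1$ produces a constant family on the parameter curve $C$, and that tensoring a flat family of bundles by a constant family is again a flat family. The only (very mild) point to check is that the isomorphisms $V|_{X \times \{t_j\}} \simeq E_j$ remain isomorphisms after tensoring with $L$, which is obvious since $-\otimes L$ is functorial.
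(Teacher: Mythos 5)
Your proof is correct, and it is exactly the intended argument: the paper states this lemma without proof (regarding it as immediate), and the natural justification is precisely your twist of the witnessing bundle $V$ on $X \times C$ by $\mathrm{pr}_1^* L$, restriction along $\iota_j$ commuting with tensor product since $\mathrm{pr}_1 \circ \iota_j = \mathrm{id}_X$. The converse via $L = \mathcal{O}_X$ is likewise the obvious and correct step, so there is nothing to add.
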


\begin{lemma}\label{lem:trivial**}
Let $X$ be a smooth scheme and let $D_1$ and $D_2$ be two Weil divisors on $X$. Then $D_1 \sim_{\rm alg} D_2$ as cycles if and only if $O_X(D_1) \sim_{\rm alg} O_X(D_2)$ as line bundles.
\end{lemma}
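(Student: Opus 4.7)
The plan is to prove both directions by directly translating between Weil divisors on $X \times C$ and line bundles on $X \times C$ via the standard correspondence $Z \leftrightsquigarrow \mathcal{O}(Z)$, which is available on the smooth scheme $X \times C$. Working one irreducible component of $X$ at a time, I may assume $X$ is integral, hence $X \times C$ is integral and smooth, so Weil and Cartier divisors coincide on it.

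For the forward direction, suppose $Z$ is a cycle on $X \times C$ witnessing $D_1 \sim_{\rm alg} D_2$. Since $D_1, D_2$ are codimension $1$, so is $Z$, i.e., $Z$ is itself a Weil (= Cartier) divisor on $X \times C$. The definition of $Z|_{X \times \{t_j\}} = D_j$ as a cycle implicitly requires that no irreducible component of $Z$ is contained in $X \times \{t_j\}$, which is precisely the condition under which the standard compatibility
\[
i_{t_j}^* \mathcal{O}_{X \times C}(Z) \;\simeq\; \mathcal{O}_X\bigl(i_{t_j}^* Z\bigr)
\]
holds for the closed immersion $i_{t_j}\colon X \hookrightarrow X \times C$. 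Setting $\mathcal{V} := \mathcal{O}_{X \times C}(Z)$ therefore gives a line bundle on $X \times C$ with $\mathcal{V}|_{X \times \{t_j\}} \simeq \mathcal{O}_X(D_j)$, witnessing $\mathcal{O}_X(D_1) \sim_{\rm alg} \mathcal{O}_X(D_2)$.

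For the reverse direction, suppose $\mathcal{V}$ is a line bundle on $X \times C$ with $\mathcal{V}|_{X \times \{t_j\}} \simeq \mathcal{O}_X(D_j)$. Since $X \times C$ is integral and smooth, I can write $\mathcal{V} \simeq \mathcal{O}_{X \times C}(Z)$ for some Weil divisor $Z$. If some component of $Z$ equals $X \times \{t_j\}$, I modify $Z$ by subtracting a suitable $\pi_C^*(E)$ for a divisor $E$ on $C$; this changes $\mathcal{V}$ only by the tensor factor $\pi_C^* \mathcal{O}_C(E)$, which is trivial on every fiber $X \times \{t\}$, so does not disturb the given isomorphisms. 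Hence I may assume $Z$ is in good position over $t_1$ and $t_2$. Setting $Z_j := i_{t_j}^* Z$, the compatibility recalled above gives $\mathcal{O}_X(Z_j) \simeq \mathcal{V}|_{X \times \{t_j\}} \simeq \mathcal{O}_X(D_j)$, so $Z_j$ is linearly equivalent to $D_j$. Since linear equivalence is a special case of algebraic equivalence (use $\mathbb{P}^1$ as the parameter curve), one gets $Z_j \sim_{\rm alg} D_j$; combined with $Z_1 \sim_{\rm alg} Z_2$ (witnessed directly by $Z$ and the curve $C$), this yields $D_1 \sim_{\rm alg} D_2$ by transitivity.

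The only genuinely delicate point is the bookkeeping that ensures the Cartier divisor restriction $i_{t_j}^* Z$ is defined and matches the line bundle restriction $i_{t_j}^* \mathcal{O}(Z)$; this is exactly the issue of arranging that no component of the divisor coincides with a fiber over $t_j$. The freedom to correct by divisors pulled back from $C$, together with the fact that such pullbacks contribute trivial line bundles on any fiber, neutralizes this obstacle. Once this is handled, both implications reduce to the standard dictionary between Cartier divisors and line bundles on smooth varieties, plus the elementary observation that linear equivalence entails algebraic equivalence.
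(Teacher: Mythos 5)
Your proof is correct and essentially the same as the paper's: one direction passes from the witnessing divisor on $X \times C$ to its associated line bundle, while the other represents the given line bundle by a Weil divisor on $X \times C$, restricts to the fibers over $t_1, t_2$, and chains the resulting linear equivalences with the algebraic equivalence witnessed by that divisor; your twisting by pullbacks from $C$ to avoid components lying in the fibers is a sound way of handling a point the paper leaves implicit. One small slip: $X \times C$ need not be integral when $k$ is not algebraically closed (only its connected components are), but since all you use is that Weil and Cartier divisors agree on the regular scheme $X \times C$ and that each fiber $X \times \{t_j\} \simeq X$ is integral, nothing is affected.
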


\begin{proof}
If $D_1$ and $D_2$ are algebraically equivalent, then there is a smooth connected scheme $T$ of dimension $>0$, $k$-rational points $t_1, t_2$ on $T$, and a Weil divisor $D$ on $X \times T$ such that $D_1 - D_2 = D_{t_1} - D_{t_2}$. We can assume that $T$ is projective.  

By \cite[Theorem 1]{Kleiman} (see also \cite[Example 10.3.2]{Fulton} if $k$ is algebraically closed), we can replace $T$ by a smooth projective curve $C$ passing through $t_1, t_2$. Thus, we have a Weil divisor $D$ on $X \times C$ such that $D_1 - D_2 = D_{t_1} - D_{t_2}$. We can modify $D$ by $D- (D_{t_2} \times C) + (D_2 \times C)$ so that $D_{t_i} = D_i$ for $i =1, 2$. Letting $\mathcal{L} = O_{X \times C}(D)$, we see that $\mathcal{L} |_{X \times \{t_i\}} \simeq O_X(D_i)$ for $i = 1,2$.

Conversely, suppose there is a line bundle $\mathcal{L}$ on $X \times C$ such that $\mathcal{L} |_{X \times \{t_i\}} \simeq O_X(D_i)$ for $i = 1,2$. Since $X\times C$ is smooth, there is a Weil divisor $D$ on $X \times C$ whose associated line bundle is $\mathcal{L}$. This implies in particular that $D_{t_i} \sim_{\rm rat} D_i$ for $i = 1,2$. In other words, we have $D_1  \sim_{\rm rat} D_{t_1} \sim_{\rm alg} D_{t_2} \sim_{\rm rat} D_{2}$, which implies that $D_1 \sim_{\rm alg} D_2$.
\end{proof}

\begin{remark}\label{remk:rat-iso}
Note that if the curve $C$ in the above definition is (a nonempty open subset of) $\mathbb{P}^1$, then we can say that $E_1$ and $E_2$ are rationally equivalent. However, when $X$ is  semi-normal, this is equivalent to saying that $E_1$ and $E_2$ are isomorphic.\end{remark}

\section{The algebraic cobordism $\Omega_* ^{\rm alg}$ modulo algebraic equivalence}\label{adq cobordism}
In this section, we define an algebraic cobordism theory of a scheme $X$ associated to algebraic equivalence.
The starting point is the simple observation that the algebraic cobordism $\Omega_* (X)$ is associated to the rational equivalence of line bundles in that, two line bundles on a smooth scheme are isomorphic if and only if they are rationally equivalent (see Remark \ref{remk:rat-iso}).

Levine and Morel constructed $\Omega_* (X)$ from the 
cobordism cycles $\mathcal{Z}_* (X)$ of Definition \ref{defn:cob-cycle-LM}. 
We will define the cobordism theory $\Omega^{\rm alg} _* (X)$ that is similar to 
that of Levine-Morel, with one additional set of relations that identifies two 
integral cobordism cycles when their line bundles are suitably related by 
algebraic equivalence. 
Recall our notation of using $\sim$ for algebraic equivalence.

\begin{definition}[{Compare with \cite[Definition 2.4.5]{LM}}]\label{precobordism definition}For $X \in \Sch_k$, the \emph{$\sim$-pre-cobordism} ${\underline{\Omega}}^{\rm alg}_*  (X)$ is the quotient of $\mathcal{Z}_* (X)$ by the following three relations:

(1) (Dim) If there is a smooth quasi-projective morphism $\pi\colon Y \to Z$ with line bundles $M_1, \cdots, M_{s>\dim  Z}$ on $Z$ with $L_i \simeq \pi^* M_i$ for $i=1, \cdots , s \leq r$, then $[f\colon Y \to X, L_1, \cdots, L_r ] = 0.$

(2) (Sect) For a section $s\colon Y \to L$ of a line bundle $L$ on $Y$ with its smooth associated divisor $i\colon D \to Y,$ we impose
\[
[f\colon Y \to X, L_1, \cdots, L_r, L] = [f \circ i \colon D \to X, i^* L_1, \cdots, i^* L_r].\]

(3) (Equiv) $[Y \to X, L_1, \cdots, L_r]$ and $[Y' \to X, L_1 ', \cdots, L_r ']$ are identified if there exists an isomorphism $\phi\colon Y \to Y'$ over $X$, a permutation $\sigma$ of $\{ 1, \cdots, r\}$ and algebraic equivalences of the line bundles $L_i \sim \phi^* (L' _{\sigma (i)})$.
\end{definition}

It is immediate from the definition that there is a natural surjection 
$\underline{\Phi}_X \colon \underline{\Omega}_* (X) \to \underline{\Omega}_* ^{\rm alg} (X)$.

\begin{remark}If we take the quotient of $\mathcal{Z}_* (X)$ by only the relations (Dim) and (Sect), then the resulting quotient group is the pre-cobordism $\underline{\Omega}_* (X)$ of Levine-Morel in \emph{ibid.}
The cobordism cycles of the form $[Y \to X, L] - [Y \to X, L']$ are zero in $\underline{\Omega}_*(X)$ if $L \simeq L'$. If $\sim$ in (Equiv) is replaced by the rational equivalence $\sim_{\rm rat}$ of line bundles, then by Remark \ref{remk:rat-iso}, the modified relation ${\rm (Equiv)}_{{\rm rat}}$ plays no role because $Y$ is smooth, thus semi-normal. 
\end{remark}

\begin{lemma}\label{operations for precobordism}All four operations (projective push-forward, smooth pull-back, external product and the first Chern class operation) in Definition \ref{basic definitions for cobordism cycles} descend onto the $\sim$-pre-cobordism $\underline{\Omega}_* ^{\rm alg} $.

\end{lemma}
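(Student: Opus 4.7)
The plan is to reduce the statement to a single new check, namely that each of the four operations respects the relation (Equiv). By the corresponding result of Levine--Morel for their pre-cobordism $\underline{\Omega}_*(X)$, these operations descend modulo (Dim) and (Sect); I would cite this as a black box and do new work only for (Equiv).

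The key auxiliary observation I would establish first is that algebraic equivalence of line bundles is stable under arbitrary pull-backs. Precisely, if $L_1 \sim L_2$ on $Y$ is witnessed by a line bundle $\mathcal{L}$ on $Y \times C$ with $\mathcal{L}|_{Y \times \{t_i\}} \simeq L_i$ for $i=1,2$, and $h\colon Y'' \to Y$ is any morphism, then $(h \times \id_C)^*\mathcal{L}$ on $Y'' \times C$ witnesses $h^* L_1 \sim h^* L_2$ via the base-change identity $(h \times \id_C)^*\mathcal{L}|_{Y'' \times \{t_i\}} \simeq h^*(\mathcal{L}|_{Y \times \{t_i\}})$. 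Every one of the four cases will reduce to an application of this simple fact.

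Now fix (Equiv)-data $(\phi\colon Y \to Y',\, \sigma,\, L_i \sim \phi^* L_{\sigma(i)}')$ relative to the appropriate base. For projective push-forward along $g\colon X \to X'$, the isomorphism $\phi$ is still over $X'$ and the line bundles on $Y,Y'$ are unchanged, so (Equiv) is immediate. For smooth pull-back along $g\colon X \to X'$, base-change gives an isomorphism $\phi \times_{X'} \id_X \colon Y \times_{X'} X \to Y' \times_{X'} X$ over $X$, and since $pr_1 \circ (\phi \times \id_X) = \phi \circ pr_1$, the auxiliary observation applied to $h = pr_1$ gives $pr_1^* L_i \sim pr_1^* \phi^* L_{\sigma(i)}' = (\phi\times\id_X)^* pr_1^* L_{\sigma(i)}'$, as required. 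For the first Chern class operator $\tilde c_1(L)$, one extends $\sigma$ by fixing the new index $r+1$; the genuine isomorphism $f^* L \simeq \phi^*(f'^* L)$ (coming from $f = f' \circ \phi$) supplies the additional (Equiv)-datum. External products are handled analogously to pull-backs, applying the auxiliary observation separately along each of the two projections from $X \times Y$.

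The main obstacle, such as it is, is purely bookkeeping: matching the permutation $\sigma$ after each operation and invoking the canonical identification $(\phi\times\id)^* pr_1^* \simeq pr_1^* \phi^*$. No input beyond the one-line auxiliary observation about stability of algebraic equivalence under pull-back is needed.
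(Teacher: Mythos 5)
Your proposal is correct and follows the same route as the paper: cite Levine--Morel for the compatibility of the four operations with (Dim) and (Sect), and reduce the (Equiv) check to the fact that algebraic equivalence of line bundles is preserved under arbitrary pull-backs (witnessed by pulling back the family over $Y\times C$). The paper simply states this last fact without the case-by-case bookkeeping you spell out, but the content is identical.
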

\begin{proof}By \cite[Remarks 2.1.11, 2.1.14, Lemmas 2.4.2, 2.4.7]{LM}, $\underline{\Omega}_* $ is an oriented Borel-Moore functor on $\Sch_k$ with product in 
the sense of \cite[Definition 2.1.10]{LM}. This implies that (Dim) and (Sect) are respected by the four operations. For (Equiv), it follows from the fact that the 
pull-back operations on line bundles via any morphisms respect algebraic equivalence.
\end{proof}

To impose the formal group law into our cobordism theory as in \cite[p. 4, \S 2.4.4]{LM}, first recall  from \emph{ibids.} that there is a graded polynomial ring $\mathbb{Z}[a_{i,j}| i,j \ge 0]$, where $a_{i,j}$ are variables of degree $i+j-1$ subject to some relations. This is called the Lazard ring, written $\mathbb{L}_*$. There is a power series $F_{\mathbb{L}_*}(u,v) \colon= \sum_{i,j} a_{i,j} u^i v^j \in {\mathbb{L}}_* [[u,v]]$ such that the pair $(\mathbb{L}_*, F_{\mathbb{L}_*})$ is the universal commutative formal group law of rank one. One also uses the cohomological indexing $\mathbb{L}^*$ by letting $\mathbb{L}^n = \mathbb{L}_{-n}$. We have $\mathbb{L}^0 \simeq \mathbb{Z}$ and $\mathbb{L}^{-n} = \mathbb{L}_n =0$ if $n<0$. Now we define the main object of study of this paper.

\begin{definition}[{Compare with \cite[Definition 2.4.10]{LM}}]\label{definition of sim cobordism}\label{defn:st cobordism} For $X \in \Sch_k$, the graded group $\Omega_* ^{\rm alg} (X)$ is defined to be the quotient of $\mathbb{L} _*\otimes_{\mathbb{Z}} \underline{\Omega}_* ^{\rm alg} (X)$ by the relations (FGL) of the form $F_{\mathbb{L}_*} (\tilde{c}_1 (L) , \tilde{c}_1 (M))([f\colon Y \to X, L_1, \cdots, L_r] )= \tilde{c}_1 (L \otimes M)([f\colon Y \to X, L_1, \cdots, L_r])$ for lines bundles $L$ and $M$ on $X$. By the relation (Dim) in Definition \ref{precobordism definition}-(1), the expression $F_{\mathbb{L}_*} (\tilde{c}_1 (L), \tilde{c}_1 (M))$ is a finite sum so that the operator is well-defined. This graded abelian group $\Omega_* ^{\rm alg}(X)$ is called the \emph{algebraic cobordism of $X$ modulo algebraic equivalence}.

When $X$ is smooth and equidimensional of dimension $n$, the codimension of a cobordism $d$-cycle is defined to be $n-d$. We set 
$\Omega^{n-d} _{\rm alg} (X)\colon = \Omega_{d} ^{\rm alg} (X)$, and 
$\Omega^* _{\rm alg} (X)$ is the direct sum of the groups over the all 
codimensions.
\end{definition}

If we omit the relation (Equiv) in the above process,
we obtain the algebraic cobordism theory $\Omega_* (X)$ of \cite[Definition 2.4.10]{LM}. In particular, we have a natural surjection $\Phi_X\colon \Omega_* (X) \to \Omega_* ^{\rm alg} (X)$.
We immediately see the following: 

\begin{proposition}\label{operations for sim cobordism}All four operations (projective push-forward, smooth pull-back, exterior product, and the first Chern class operation) in Definition \ref{basic definitions for cobordism cycles} descend onto the cobordism ${\Omega}_* ^{\rm alg}$.
\end{proposition}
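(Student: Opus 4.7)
The plan is to bootstrap directly from Lemma \ref{operations for precobordism}. By that lemma the four operations are already well-defined on the pre-cobordism $\underline{\Omega}^{\rm alg}_*$, and they extend $\mathbb{L}_*$-linearly to $\mathbb{L}_* \otimes_{\mathbb{Z}} \underline{\Omega}^{\rm alg}_*(X)$ by acting trivially on the $\mathbb{L}_*$ factor. So it suffices to verify that each of the four operations sends the (FGL) relations to (FGL) relations, hence descends to the quotient $\Omega^{\rm alg}_*(X)$.

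First I would recall the basic commutation identities which hold at the level of cobordism cycles $\mathcal{Z}_*$ (and therefore on $\underline{\Omega}^{\rm alg}_*$) by \cite[Remarks 2.1.11, 2.1.14]{LM}: Chern class operators commute among themselves; they commute with smooth pull-back in the sense that $g^*\, \tilde{c}_1(L) = \tilde{c}_1(g^* L)\, g^*$; they satisfy the projection formula $g_*(\tilde{c}_1(g^* L)\alpha) = \tilde{c}_1(L)\, g_*(\alpha)$ for projective push-forwards; and they are compatible with external products via $\tilde{c}_1(pr_1^* L)(\alpha \times \beta) = \tilde{c}_1(L)(\alpha) \times \beta$. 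Using these identities, a typical (FGL) generator
\[
F_{\mathbb{L}_*}(\tilde{c}_1(L), \tilde{c}_1(M))(\alpha) - \tilde{c}_1(L \otimes M)(\alpha)
\]
is carried by any of the four operations either to an (FGL) relation on the target (with the line bundles replaced by their appropriate pull-backs) or to zero. Each verification is mechanical and parallels the corresponding step in \cite[Proposition 2.4.11]{LM} for the original $\Omega_*$; the (Dim) relation ensures the formal sum $F_{\mathbb{L}_*}(\tilde{c}_1(L),\tilde{c}_1(M))$ remains finite after applying any operation.

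The anticipated obstacle is essentially formal: one must confirm that nothing in the (Equiv) relation interferes with the verifications above. But this has already been absorbed into $\underline{\Omega}^{\rm alg}_*$ in Lemma \ref{operations for precobordism} (the key point being that pull-backs of line bundles along arbitrary morphisms respect algebraic equivalence), and the (FGL) relation only involves pull-backs of line bundles via the same morphisms appearing in the operations, so no new algebraic-equivalence compatibilities need to be checked. Hence the proposition reduces entirely to Lemma \ref{operations for precobordism} together with the Levine--Morel identities.
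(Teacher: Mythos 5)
Your verifications for smooth pull-back, the Chern class operator, and the external product are fine: in each of those cases the relevant morphism pulls line bundles back, so an (FGL) generator built from $L,M\in\Pic(X)$ goes to an (FGL) generator built from the pulled-back bundles. The gap is in the projective push-forward case, which is precisely the non-mechanical one. In Definition \ref{definition of sim cobordism} the (FGL) relations at $X$ are indexed by line bundles $L,M$ \emph{on the base scheme} $X$. For a projective $g\colon X \to X'$, the element $g_*\bigl(F_{\mathbb{L}_*}(\tilde{c}_1(L),\tilde{c}_1(M))(\alpha)-\tilde{c}_1(L\otimes M)(\alpha)\bigr)$ involves bundles $L,M$ that in general are not of the form $g^*L', g^*M'$ for bundles on $X'$, so it is not visibly an (FGL) relation on $X'$; the projection formula only lets you commute $g_*$ past Chern operators of bundles \emph{pulled back from} $X'$, which is the wrong direction here. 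So the claim that every operation carries (FGL) generators to (FGL) relations "with the line bundles replaced by their appropriate pull-backs" breaks down exactly where an argument is needed, and no appeal to commutation identities repairs it.

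There are two standard ways to close this. One is to read the quotient in Definition \ref{defn:st cobordism} the way Levine--Morel set up their Definition 2.4.10, namely via their formalism of imposing relations on an oriented Borel-Moore functor, in which the relation subgroup is by construction closed under $g_*$, $\tilde{c}_1$, smooth pull-back and products; this is why the paper states the proposition without proof. The other, if one insists on the naive generators, is to use the presentation of $\Omega^{\rm alg}_*(X)$ by the subgroup $\tilde{\mathcal{R}}^{\rm alg}_*(X)$ of \eqref{eqn:Cob-reln} (Proposition \ref{prop:generators}), whose generators are push-forwards from smooth $Y/X$ of (FGL) elements with $L,M\in\Pic(Y)$ and hence are manifestly stable under all four operations; equivalently, one can note that $\Omega^{\rm alg}_*(X)$ is the quotient of $\Omega_*(X)$ by the subgroup generated by the images of the (Equiv)-differences (cf.\ Proposition \ref{prop:Alg-st}), a subgroup that each of the four operations clearly preserves, so descent follows from the corresponding Levine--Morel statement for $\Omega_*$. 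Your reduction to Lemma \ref{operations for precobordism} plus "mechanical" checks is therefore incomplete as written; the push-forward step needs one of these inputs.
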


\begin{remark}
By definition, we have a natural ring homomorphism 
\begin{equation}\label{eqn:base}
\Phi^{\rm alg}  \colon \mathbb{L}_*  \to \Omega_*^{\rm alg} (k)
\end{equation}
induced from the quotient map $ \mathbb{L}_* \otimes_{\mathbb{Z}} \underline{\Omega}_* ^{\rm alg} (k) \to \Omega_* ^{\rm alg} (k)$, which factors through the known map $\Phi\colon \mathbb{L}_* \to \Omega_*(k)$ in \cite[p.39]{LM}. We will see later in Proposition \ref{point case} that this is an isomorphism.
\end{remark}

We have a natural map $ q^{\rm alg} \colon {\underline{\Omega}}^{\rm alg}_*  (X) \to \Omega_* ^{\rm alg} (X)$. 
It was proven in  \cite[Lemma 2.5.9]{LM} that the  map 
$q\colon \underline{\Omega}_* (X) \to \Omega_* (X)$ is surjective.
We have a similar result:

\begin{lemma}\label{lem:surj}
Given any scheme $X$, the abelian group $\Omega_* ^{\rm alg} (X)$ is generated by the images of the integral cobordism cycles $[Y \to X, L_1, \cdots, L_r]$. In other words, the natural map $\mathcal{Z}_*(X) \to \Omega_* ^{\rm alg} (X)$ is surjective.
\end{lemma}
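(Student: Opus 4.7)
The plan is to reduce the statement directly to the analogous result \cite[Lemma 2.5.9]{LM} for the algebraic cobordism $\Omega_*(X)$, using the natural surjection $\Phi_X \colon \Omega_*(X) \twoheadrightarrow \Omega_*^{\rm alg}(X)$ noted immediately after Definition \ref{definition of sim cobordism}. First I would record that the natural maps from $\mathcal{Z}_*(X)$ to $\Omega_*(X)$ and to $\Omega_*^{\rm alg}(X)$ sit in a commutative diagram in which the map to $\Omega_*^{\rm alg}(X)$ factors as the map to $\Omega_*(X)$ followed by $\Phi_X$. The surjectivity of $\Phi_X$ is built into the construction: $\Omega_*^{\rm alg}(X)$ is obtained from $\Omega_*(X)$ by imposing the single additional relation (Equiv), while the relations (Dim), (Sect), and (FGL) are shared by both theories.

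Given any $\alpha \in \Omega_*^{\rm alg}(X)$, the next step is to lift it to some $\widetilde{\alpha} \in \Omega_*(X)$ via $\Phi_X$, apply \cite[Lemma 2.5.9]{LM} to express $\widetilde{\alpha}$ as a $\mathbb{Z}$-linear combination of classes of integral cobordism cycles $[Y_i \to X, L_{i,1}, \dots, L_{i,r_i}]$ in $\Omega_*(X)$, and then push this representation forward through $\Phi_X$ to obtain the desired $\mathbb{Z}$-linear expression of $\alpha$ in $\Omega_*^{\rm alg}(X)$. This exhibits the natural map $\mathcal{Z}_*(X) \to \Omega_*^{\rm alg}(X)$ as the composite of two surjections and hence as a surjection itself.

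The only point that genuinely requires checking is the well-definedness of $\Phi_X$, which amounts to verifying that the defining relations of $\Omega_*(X)$ continue to hold in $\Omega_*^{\rm alg}(X)$; this is immediate from the construction, since $\Omega_*^{\rm alg}(X)$ is built as a further quotient. Thus there is no essential new obstacle beyond what is already settled in \cite[Lemma 2.5.9]{LM}. If one preferred a self-contained argument, one could instead mimic the inductive procedure in the proof of \emph{loc.\ cit.} verbatim, since that proof relies only on Chern class and push-forward operations, both of which descend to $\underline{\Omega}_*^{\rm alg}$ (and then to $\Omega_*^{\rm alg}$) by Lemma \ref{operations for precobordism} and Proposition \ref{operations for sim cobordism}.
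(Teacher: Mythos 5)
Your proposal is correct and follows essentially the same route as the paper: the paper's own proof also combines the surjectivity of $q\colon \underline{\Omega}_*(X)\to\Omega_*(X)$ from \cite[Lemma 2.5.9]{LM} with the evident surjectivity of $\Phi_X$ in the commutative square relating $\underline{\Omega}_*$, $\underline{\Omega}_*^{\rm alg}$, $\Omega_*$ and $\Omega_*^{\rm alg}$, merely phrasing the conclusion as surjectivity of $q^{\rm alg}$ rather than factoring $\mathcal{Z}_*(X)\to\Omega_*^{\rm alg}(X)$ through $\Omega_*(X)$. The only caveat, which you already flag, is that $\Omega_*^{\rm alg}(X)$ is defined as a quotient of $\mathbb{L}_*\otimes\underline{\Omega}_*^{\rm alg}(X)$ rather than literally of $\Omega_*(X)$, so the existence of $\Phi_X$ requires the (routine) check that the (FGL) relations are sent to (FGL) relations, exactly as the paper takes for granted.
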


\begin{proof}
It suffices to show that the map $q^{\rm alg}\colon {\underline{\Omega}}^{\rm alg}_* \to \Omega_* ^{\rm alg} (X)$ is surjective. But, this follows from the observation that in the commutative diagram
\begin{equation}
\begin{CD}
\underline{\Omega}_* (X) @>{\underline{\Phi}_X}>> 
\underline{\Omega}_* ^{\rm alg} (X) \\
@V{q}VV @V{q^{\rm alg}}VV\\
\Omega_* (X) @>{\Phi}_X>> \Omega_* ^{\rm alg} (X),
\end{CD}
\end{equation} 
the map $\Phi_X$ is clearly surjective and $q$ is surjective by 
\cite[Lemma 2.5.9]{LM}. 
\end{proof}

Our discussion so far summarizes as follows (compare with \cite[Theorem 2.4.13]{LM}):

\begin{proposition}The theory $\Omega_* ^{\rm alg}$ is an oriented Borel-Moore $\mathbb{L}_*$-functor on $\Sch_k$ of geometric type in the sense of {\rm \cite[Definitions 2.1.2, 2.1.12, 2.2.1]{LM}}. 
\end{proposition}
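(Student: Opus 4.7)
The plan is to observe that this proposition is essentially a summary of what has been assembled above, and that most of the axioms are inherited from the universal oriented Borel-Moore functor $\mathcal{Z}_*$ and pass to $\Omega_*^{\rm alg}$ through our earlier verifications. I would organize the verification by the three layers of structure invoked in Definitions 2.1.2, 2.1.12 and 2.2.1 of \cite{LM}.

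First, for the oriented Borel-Moore functor part, I would appeal to the fact recorded right after Definition \ref{basic definitions for cobordism cycles} that $\mathcal{Z}_*$ is the universal oriented Borel-Moore functor on $\Sch_k$ with products. By Lemma \ref{operations for precobordism} and Proposition \ref{operations for sim cobordism}, the four structural operations descend to $\Omega_*^{\rm alg}$; the compatibility axioms (functoriality of push-forward and pull-back, base-change in transverse Cartesian squares, commutativity of first Chern class operators, projection formula, and compatibilities of the external product with push-forward, pull-back and Chern classes) then descend automatically from $\mathcal{Z}_*$ to $\Omega_*^{\rm alg}$, since they are already identities in $\mathcal{Z}_*$ and our quotient relations only strengthen the relations on $\mathcal{Z}_*$.

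Second, for the $\mathbb{L}_*$-functor structure, the ring map $\Phi^{\rm alg}\colon \mathbb{L}_*\to \Omega_*^{\rm alg}(k)$ of \eqref{eqn:base} together with the external product endows each $\Omega_*^{\rm alg}(X)$ with a graded $\mathbb{L}_*$-module structure that is compatible with push-forwards, pull-backs, external products and Chern class operators; these compatibilities are inherited from the analogous ones in $\underline{\Omega}_*^{\rm alg}$ and reduce to identities already valid in $\mathcal{Z}_*$. For the geometric type property, $\Omega_*^{\rm alg}$ is by construction the quotient of $\mathbb{L}_*\otimes_{\mathbb{Z}} \underline{\Omega}_*^{\rm alg}$ by exactly the relations (Dim), (Sect) and (FGL), so this is immediate from Definitions \ref{precobordism definition} and \ref{definition of sim cobordism}.

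The step I expect to require the most care is making sure the additional relation (Equiv) does not interfere with the formal group law relation: one must check that $F_{\mathbb{L}_*}(\tilde{c}_1(L),\tilde{c}_1(M)) = \tilde{c}_1(L\otimes M)$ remains well-posed on $\Omega_*^{\rm alg}(X)$ after we identify cobordism cycles whose bundle data differ by algebraic equivalence. This reduces to two ingredients: the operators $\tilde{c}_1(L)$ on $\mathcal{Z}_*(X)$ send algebraically equivalent line bundles to classes identified by (Equiv) in $\underline{\Omega}_*^{\rm alg}$, and Lemma \ref{lem:trivial} ensures that the tensor product $L\otimes M$ of line bundles respects algebraic equivalence in both arguments. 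Once this compatibility is observed, all of the structure checks go through exactly as in \cite[Theorem 2.4.13]{LM}, and the proposition follows.
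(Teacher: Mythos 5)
Your proposal is correct and takes essentially the same route as the paper, which states this proposition without a separate proof as a summary of the construction: the four operations descend by Lemma \ref{operations for precobordism} and Proposition \ref{operations for sim cobordism}, the oriented Borel-Moore functor axioms are inherited from $\mathcal{Z}_*$ (resp. $\underline{\Omega}_*$), the $\mathbb{L}_*$-structure comes from the $\mathbb{L}_*\otimes_{\mathbb{Z}}(-)$ construction, and (Dim), (Sect), (FGL) hold by definition, exactly as in \cite[Theorem 2.4.13]{LM}. Your extra "careful step" about (Equiv) interfering with (FGL) is not actually needed: the (FGL) relation is imposed as a further quotient after (Equiv), so beyond the already-verified descent of the Chern class operators (and finiteness of the sum via (Dim)) there is no well-posedness issue to check.
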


In the rest of this section, we shall prove the following
universal property of $\Omega_* ^{\rm alg}$.

\begin{definition}\label{def:respects alg equiv}Let $A_*$ be an oriented Borel-Moore $\mathbb{L}_*$-functor on $\Sch_k$ of geometric type. We say that \emph{$A_*$ respects algebraic equivalence}, if for any $X \in \Sch_k$ and for any pair of algebraically equivalent line bundles $L$ and $M$ over $X$, we have $\tilde{c}_1 (L) = \tilde{c}_1 (M)$ as operators $A_* (X) \to A_{*-1} (X)$. 

We say that an oriented cohomology theory $A^*$ on $\Sm_k$ \emph{respects algebraic equivalence}, if for $X \in \Sm_k$ and a pair of algebraically equivalent line bundles $L$ and $M$ over $X$, we have $\tilde{c}_1 (L) = \tilde{c}_1 (M)$ as operators $A^* (X) \to A^{*+1} (X)$. 
\end{definition}


\begin{proposition}\label{prop:universal lev0}The theory $\Omega_* ^{\rm alg}$ is universal among all oriented Borel-Moore $\mathbb{L}_*$-functors on $\Sch_k$ of geometric type that respect algebraic equivalence. In other words, for any theory $A_*$ satisfying Definition \ref{def:respects alg equiv}, there exists a unique morphism $\theta_A \colon \Omega_* ^{\rm alg} \to A_*$ of oriented Borel-Moore $\mathbb{L}_*$-functor of geometric type on $\Sch_k$. 
\end{proposition}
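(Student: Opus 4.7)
The plan is to reduce to the Levine--Morel universality theorem for $\Omega_*$ and verify that the induced morphism descends through the extra (Equiv) relation. By \cite[Theorem 2.4.13]{LM}, for any oriented Borel--Moore $\mathbb{L}_*$-functor $A_*$ of geometric type on $\Sch_k$ there is a unique morphism $\vartheta_A \colon \Omega_* \to A_*$ in this category. Since the canonical map $\Phi_X \colon \Omega_*(X) \twoheadrightarrow \Omega_*^{\rm alg}(X)$ is surjective, one is reduced to showing that $\vartheta_A$ annihilates the kernel of $\Phi_X$; the relations defining $\Omega_*$ are already respected by $\vartheta_A$, so only the new (Equiv) relations need to be checked.

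For this I would use the formula
\[
\vartheta_A\bigl([f\colon Y \to X, L_1, \ldots, L_r]\bigr) \;=\; f_* \circ \tilde{c}_1(L_1) \circ \cdots \circ \tilde{c}_1(L_r) (1_Y),
\]
valid in every oriented Borel--Moore $\mathbb{L}_*$-functor by \cite[Remark~2.1.8]{LM}. Given two integral cobordism cycles identified by (Equiv) via a triple $(\phi, \sigma, \{L_i \sim \phi^*(L'_{\sigma(i)})\})$, naturality of $\vartheta_A$ along the $X$-isomorphism $\phi\colon Y \to Y'$ reduces the problem to the case $Y = Y'$, $\phi = \id_Y$ and $L_i \sim L'_{\sigma(i)}$ for every $i$. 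Since first Chern class operators on a smooth scheme commute with each other inside the endomorphism algebra of $A_*(Y)$ (a structural consequence of the oriented Borel--Moore $\mathbb{L}_*$-functor axioms), the permutation $\sigma$ may be absorbed. The hypothesis that $A_*$ respects algebraic equivalence in the sense of Definition \ref{def:respects alg equiv} then yields $\tilde{c}_1(L_i) = \tilde{c}_1(L'_{\sigma(i)})$ on $A_*(Y)$ for every $i$, which completes the required vanishing.

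Uniqueness of the resulting morphism $\theta_A \colon \Omega_*^{\rm alg} \to A_*$ is immediate from Lemma \ref{lem:surj}: the images of the integral cobordism cycles $[Y \to X, L_1, \ldots, L_r]$ generate $\Omega_*^{\rm alg}(X)$, and the displayed formula forces the value of $\theta_A$ on each such generator. The only genuine content beyond the Levine--Morel theorem is thus the verification that commutativity of Chern class operators combined with the algebraic-equivalence axiom jointly kill the (Equiv) relation; that will be the step deserving the most care, but it is a direct computation once the push-pull expression above is on the table.
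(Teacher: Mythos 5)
Your proposal is correct and, in outline, it is the paper's own argument: invoke \cite[Theorem 2.4.13]{LM} to obtain $\vartheta_A \colon \Omega_* \to A_*$, show it factors through the surjection $\Phi_X \colon \Omega_*(X) \to \Omega_*^{\rm alg}(X)$, and get uniqueness from generation by cobordism cycles (Lemma \ref{lem:surj}); you only omit the (one-line) verification that $\Omega_*^{\rm alg}$ itself respects algebraic equivalence, which follows from the relation (Equiv) of Definition \ref{precobordism definition}. The step you dispatch in a clause --- ``only the new (Equiv) relations need to be checked'' --- is exactly where the paper does its real work: identifying $\ker(\Phi_X)$ with the subgroup generated by algebraic-equivalence differences is Proposition \ref{prop:Alg-st}, whose proof uses Lemma \ref{lem:Alg-st-elem}, the identification $\tilde{\Omega}^{\rm alg}_*(X) \simeq \Omega^{\rm alg}_*(X)$ of Proposition \ref{prop:generators}, and a snake-lemma argument. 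In your version you should either cite that proposition or supply the (routine, but not free) observation that the (FGL) relations defining $\Omega^{\rm alg}_*$ in Definition \ref{definition of sim cobordism} are precisely the images of those defining $\Omega_*$, so that $\Omega^{\rm alg}_*(X)$ is the quotient of $\Omega_*(X)$ by the $\mathbb{L}_*$-submodule generated by the images of the (Equiv) differences. Granting that, your handling of the relations themselves is a mild and legitimate variant: where the paper first reduces (via Lemmas \ref{lem:trivial} and \ref{lem:Alg-st-elem}) to differences $[Y \to X, L] - [Y \to X, M]$ with a single pair of algebraically equivalent bundles and then applies Definition \ref{def:respects alg equiv}, you kill the full multi-bundle (Equiv) relation directly in $A_*$ using naturality along the $X$-isomorphism $\phi$, commutativity of the first Chern class operators (indeed part of the oriented Borel--Moore functor axioms), and the algebraic-equivalence hypothesis; this works and spares you the single-bundle reduction, at the cost of the kernel identification noted above.
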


We shall prove this proposition following a series of deductions. These intermediate
results provide useful information on the relationship between $\Omega_*$ and $\Omega_* ^{\rm alg}$.

\begin{lemma}\label{lem:Alg-st-elem}
The kernel of the map $\underline{\Phi}_X \colon \underline{\Omega}_*(X) \to \underline{\Omega}^{\rm alg}_*(X)$ is a subgroup generated by elements of the form $[f\colon Y \to X, L] -[f\colon Y \to X, M]$ with $L \sim M$.
\end{lemma}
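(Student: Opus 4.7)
The plan is to prove the two inclusions of the claimed equality. Let $K \subseteq \underline{\Omega}_*(X)$ denote the subgroup generated by the displayed simple differences. The inclusion $K \subseteq \ker(\underline{\Phi}_X)$ is immediate, since each generator of $K$ is the image in $\underline{\Omega}_*(X)$ of an $r=1$ instance of the (Equiv) relation (with $\phi = \id_Y$ and $\sigma = \id$), hence vanishes in $\underline{\Omega}_*^{\rm alg}(X)$.

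For the reverse inclusion, by construction $\ker(\underline{\Phi}_X)$ is generated as a subgroup by the image in $\underline{\Omega}_*(X)$ of the raw (Equiv) relations. Given such a relation, the isomorphism $\phi\colon Y \xrightarrow{\sim} Y'$ and the bijection $\sigma$ are already absorbed into the equivalence-class definition of integral cobordism cycles in $\mathcal{Z}_*(X)$; so after using $\phi$ to identify $Y'$ with $Y$ and $\sigma$ to reorder the line bundles, the relation can be taken in the standard form $[Y \to X, L_1, \dots, L_r] - [Y \to X, M_1, \dots, M_r]$ with $L_i \sim M_i$ for every $i$. The telescoping identity then writes this difference as $\sum_{i=1}^r \delta_i$, where each $\delta_i = [Y \to X, M_1, \dots, M_{i-1}, L_i, L_{i+1}, \dots, L_r] - [Y \to X, M_1, \dots, M_{i-1}, M_i, L_{i+1}, \dots, L_r]$ is a single-position difference. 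It thus suffices to show each $\delta_i$ lies in $K$.

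I would prove this by induction on $r$. The case $r = 1$ is immediate since $\delta_i$ is itself a simple element. For $r \geq 2$ and a single-position difference $\delta = [Y \to X, N_1, \dots, L, \dots, N_r] - [Y \to X, N_1, \dots, M, \dots, N_r]$ with $L \sim M$, use the algebraic equivalence to choose a smooth projective connected curve $C$, points $t_1, t_2 \in C(k)$, and a line bundle $V$ on $Y \times C$ with $V|_{Y \times \{t_j\}} \simeq L$ for $j=1$ and $\simeq M$ for $j=2$. Applying (Sect) to $\mathrm{pr}_2^*\mathcal{O}_C(t_j)$, whose smooth section divisor is $Y \times \{t_j\} \simeq Y$, realizes each summand of $\delta$ as a cobordism cycle on $Y \times C$; so $\delta$ equals a single-position $(r+1)$-tuple difference on $Y \times C$ whose differing position is the last. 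Then apply (Sect) a second time to one of the non-differing line bundles (say $V$ or some $\mathrm{pr}_1^* N_j$), using a Bertini-type argument over characteristic zero—passing if necessary to a projective-bundle extension or blowup—to secure the required smooth divisor representative. This absorbs one non-differing line bundle into a smooth divisor inside $Y \times C$ and returns us to a single-position $r$-tuple difference on that smaller smooth scheme, where the inductive hypothesis applies. Pushforward then moves the resulting sum of simple elements back into $K \subseteq \underline{\Omega}_*(X)$.

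The main obstacle is the inductive step for $r \geq 2$: since $\underline{\Omega}_*$ does not yet incorporate the formal group law, we cannot manipulate a line bundle via tensor product with an ample one to force the existence of a smooth section divisor. The fix—passing to an auxiliary smooth scheme where Bertini supplies such a divisor, applying (Sect) there, and transporting the identity back by pushforward—requires careful geometric bookkeeping so that the induction parameter $r$ is genuinely reduced at each stage and that the construction terminates in an honest sum of simple differences.
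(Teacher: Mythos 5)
Your opening moves coincide with the paper's: the easy inclusion, the identification of $\ker(\underline{\Phi}_X)$ with the image of the (Equiv) relations, the normalization (via $\phi$ and $\sigma$) to a difference $[Y \to X, L_1, \cdots, L_r] - [Y \to X, M_1, \cdots, M_r]$ with $L_i \sim M_i$, and the telescoping into single-position differences. The genuine gap is your inductive step for $r \ge 2$. The relation (Sect) can only absorb a line bundle that actually admits a section whose zero scheme is a smooth divisor; an arbitrary non-differing bundle $N_j$ on $Y$ (or $V$ on $Y \times C$) need not have any nonzero section at all, and pulling it back to a blow-up or a projective bundle does not create sections, while Bertini only helps once the bundle is already effective (indeed base-point free). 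The standard device for reducing to such bundles---writing $N_j$ as a difference of very ample bundles and expanding $\tilde{c}_1$ of a tensor product---is precisely the formal group law, which is unavailable in $\underline{\Omega}_*$, as you yourself observe. So the step that is supposed to decrease $r$ is unproved; worse, your first move (applying (Sect) to $pr_2^*O_C(t_j)$) increases the number of attached bundles from $r$ to $r+1$, so without a genuine absorption mechanism the recursion has no reason to terminate. Flagging this as ``the main obstacle'' does not close it: as written, the reverse inclusion is established only for $r = 1$.

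For comparison, the paper does not attempt to strip off the auxiliary bundles geometrically. It first pushes forward to reduce to the case $X = Y$, $f = {\rm Id}_Y$ (using that $f_*$ sends simple differences over $Y$ to simple differences over $X$), so that all line bundles in sight live on $Y$ itself; a single-position difference is then of the form $\tilde{c}_1(N_s) \circ \cdots \circ \tilde{c}_1(N_1)\bigl([Y \to Y, L] - [Y \to Y, M]\bigr)$, and the paper concludes, rather tersely, ``by repeated applications of the Chern class operators'' in the quotient $\overline{\underline{\Omega}}^{\rm alg}_*(Y)$---that is, it argues at the level of the operators acting on the one-bundle difference, instead of trying to trade the multi-bundle difference for one-bundle differences supported on auxiliary smooth divisors. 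If you wish to salvage your route, you should either adopt this operator-level reduction, or prove the statement at the level of $\Omega_*$ (as in Proposition \ref{prop:Alg-st}), where the formal group law does allow reduction to very ample bundles so that (Sect) becomes applicable; the purely geometric (Sect)-plus-Bertini scheme inside $\underline{\Omega}_*$ does not work as proposed.
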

\begin{proof}
Let $\theta_X\colon \underline{\Omega}_*(X) \twoheadrightarrow \overline{\underline{\Omega} }^{\rm alg} _*(X)$ be the quotient of $\underline{\Omega}_*(X)$ by the subgroup generated by elements given in the lemma. It follows from the definition and the surjection $\mathcal{Z}_*(X) \twoheadrightarrow \underline{\Omega}_*(X)$ that $\ker( \underline{\Phi}_X)$ is generated by elements of the form $\eta = [f\colon Y \to X, L_1, \cdots , L_r] -[f'\colon Y' \to X, L'_1, \cdots , L'_r],$ where $\phi\colon Y \to Y'$ is an isomorphism over $X$ and $\sigma $ is a permutation of $\{ 1, \cdots, r \}$ such that $L_i \sim \phi^*(L' _{\sigma (i)})$. It suffices to show that such elements vanish in $\overline{\underline{\Omega}}^{\rm alg}_*(X)$. We can modify $\eta$ so that $\eta = [f\colon Y \to X, L_1, \cdots , L_r] - [f\colon Y \to X, L'_1, \cdots , L'_r]$, where $L_i \sim L'_i$ for $1 \le i \le r$ by virtue of the relations in $\underline{\Omega}_*(X)$ as described in Definition \ref{defn:cob-cycle-LM}. Since $\theta_X(\eta)  =   f_* \circ \theta_Y \{\tilde{c}_1 (L_1) \circ \cdots \circ \tilde{c}_1 (L_r)(1_Y) - \tilde{c}_1 (L'_1) \circ \cdots \circ \tilde{c}_1 (L'_r)(1_Y)\}, $ it is enough to consider the case when $X = Y$ and $f = {\rm Id}_Y$. The lemma now follows by repeated applications of the Chern class operators, \emph{i.e.}, $\tilde{c}_1 (L_1) \circ \cdots \circ \tilde{c}_1 (L_r)(1_Y) = \tilde{c}_1 (L'_1) \circ \cdots \circ \tilde{c}_1 (L'_r)(1_Y)$ in $\overline{\underline{\Omega}}^{\rm alg}_*(Y)$.
\end{proof}

For $X \in \Sch_k$,  let $\tilde{\mathcal{R}}^{\rm alg}_* (X)$ denote (compare with \cite[Definition 2.5.13]{LM}) the graded subgroup of $\underline{\Omega}^{\rm alg}_*(X)$ generated by elements of the form
\begin{equation}\label{eqn:Cob-reln} f_* \circ \tilde{c}_1 (L_1) \circ \cdots \circ \tilde{c}_1 (L_r) \{ F\left(\tilde{c}_1 (L), \tilde{c}_1 (M)\right)(\eta) - \tilde{c}_1(L \otimes M)(\eta)\},
\end{equation}
where $[f\colon Y \to X, L_1, \cdots , L_r]$ is a standard cobordism cycle, $L, M \in \Pic(Y)$ and $\eta \in \underline{\Omega}^{\rm alg}_*(Y)$.
Since we have a natural surjection $\mathcal{Z}_*(k) \twoheadrightarrow \Omega_*(k)$ (see \cite[Lemma 2.5.9]{LM}) and the isomorphism $\Phi\colon \mathbb{L}_* \xrightarrow{\simeq} \Omega_*(k)$ (see \cite[Theorem 1.2.7]{LM}), each element $a_{i,j} \in \mathbb{L}_*$ has a lift in $\mathcal{Z}_*(k)$. In particular, the elements of the form $F\left(\tilde{c}_1 (L), \tilde{c}_1 (M)\right)(\eta)$ are well-defined in $\underline{\Omega}_*(Y)$, thus well-defined in $\underline{\Omega}^{\rm alg}_*(Y)$. Set $\tilde{\Omega}^{\rm alg}_*(X) \colon = {\underline{\Omega}^{\rm alg}_*(X)}/{\tilde{\mathcal{R}}^{\rm alg}_* (X)}$.
The following result is a refinement of Lemma \ref{lem:surj}.

\begin{proposition}\label{prop:generators}
For any $X \in \Sch_k$, there is a natural map $\psi^{\rm alg}_X \colon \tilde{\Omega}^{\rm alg}_*(X) \to \Omega^{\rm alg}_*(X)$ which is an isomorphism.
\end{proposition}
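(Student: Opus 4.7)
The strategy will mirror Levine--Morel's proof of the analogous isomorphism $\tilde{\Omega}_* \cong \Omega_*$ (compare \cite[Remark 2.5.12, Lemma 2.5.9]{LM}), adapted to the algebraic-equivalence setting. First I construct $\psi^{\rm alg}_X$. The assignment $\eta \mapsto 1 \otimes \eta$ gives $\underline{\Omega}^{\rm alg}_*(X) \hookrightarrow \mathbb{L}_* \otimes_{\mathbb{Z}} \underline{\Omega}^{\rm alg}_*(X)$; composing with the projection to $\Omega^{\rm alg}_*(X)$ produces a canonical morphism $\underline{\Omega}^{\rm alg}_*(X) \to \Omega^{\rm alg}_*(X)$. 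Each generator of $\tilde{\mathcal{R}}^{\rm alg}_*(X)$ has the form $f_* \circ \tilde{c}_1(L_1) \circ \cdots \circ \tilde{c}_1(L_r)\{F(\tilde{c}_1(L),\tilde{c}_1(M))(\eta) - \tilde{c}_1(L\otimes M)(\eta)\}$, and since projective pushforward and Chern class operators descend to $\Omega^{\rm alg}_*$ by Proposition \ref{operations for sim cobordism} while the inner bracket is killed by the (FGL) relation in $\Omega^{\rm alg}_*(X)$, the whole generator maps to zero. Hence the map factors through $\tilde{\Omega}^{\rm alg}_*(X)$ to give $\psi^{\rm alg}_X$.

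For the inverse, the plan is to equip $\tilde{\Omega}^{\rm alg}_*(X)$ with an $\mathbb{L}_*$-action through which the quotient $\mathbb{L}_* \otimes_{\mathbb{Z}} \underline{\Omega}^{\rm alg}_*(X) \twoheadrightarrow \Omega^{\rm alg}_*(X)$ admits a natural factorization. Using the iso $\Phi\colon \mathbb{L}_* \xrightarrow{\simeq} \Omega_*(k)$ of \cite[Theorem 1.2.7]{LM}, pick a graded $\mathbb{Z}$-linear section $\tilde{\Phi}\colon \mathbb{L}_* \to \mathcal{Z}_*(k)$ of $\mathcal{Z}_*(k) \twoheadrightarrow \Omega_*(k)$. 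Composing with $\mathcal{Z}_*(k) \twoheadrightarrow \tilde{\Omega}^{\rm alg}_*(k)$ yields $\bar{\Phi}\colon \mathbb{L}_* \to \tilde{\Omega}^{\rm alg}_*(k)$, and this is independent of $\tilde{\Phi}$: two sections differ in $\ker(\mathcal{Z}_*(k) \twoheadrightarrow \Omega_*(k))$, which by the analysis of \cite{LM} is generated by (Dim), (Sect), and FGL-type relations, and each of these vanishes in $\tilde{\Omega}^{\rm alg}_*(k)$ by construction, since (Dim) and (Sect) are already killed in $\underline{\Omega}^{\rm alg}_*(k)$ while the FGL-type relations lie in $\tilde{\mathcal{R}}^{\rm alg}_*(k)$.

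Using $\bar{\Phi}$ together with external product (which descends from $\underline{\Omega}^{\rm alg}_*$ to $\tilde{\Omega}^{\rm alg}_*$ because external product with an arbitrary cobordism cycle intertwines $f_*$ and $\tilde{c}_1$ of pulled-back line bundles, so it carries generators of $\tilde{\mathcal{R}}^{\rm alg}_*$ to generators of $\tilde{\mathcal{R}}^{\rm alg}_*$), I set
\[
\chi^{\rm alg}_X(a \otimes \eta) := \bar{\Phi}(a) \times \eta.
\]
This factors through the (FGL) relation because, taking $f = \mathrm{Id}_X$ and $r = 0$ in the defining generating set of $\tilde{\mathcal{R}}^{\rm alg}_*(X)$, the element $F(\tilde{c}_1(L),\tilde{c}_1(M))(\eta) - \tilde{c}_1(L \otimes M)(\eta)$ is already zero in $\tilde{\Omega}^{\rm alg}_*(X)$. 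So $\chi^{\rm alg}_X\colon \Omega^{\rm alg}_*(X) \to \tilde{\Omega}^{\rm alg}_*(X)$ is well-defined. Finally I verify the two maps are mutually inverse on generators: $\chi^{\rm alg}_X \circ \psi^{\rm alg}_X$ sends $\eta \mapsto \bar{\Phi}(1) \times \eta = 1_k \times \eta = \eta$, while $\psi^{\rm alg}_X \circ \chi^{\rm alg}_X$ sends $a \otimes \eta \mapsto \tilde{\Phi}(a) \times \eta$, which equals $a \cdot \eta$ under the $\mathbb{L}_*$-module structure of $\Omega^{\rm alg}_*(X)$ coming from $\mathbb{L}_* \xrightarrow{\simeq} \Omega_*(k) \to \Omega^{\rm alg}_*(k)$.

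The main obstacle I anticipate is the descent of the external product to $\tilde{\Omega}^{\rm alg}_*$, namely showing that $\alpha \times \xi \in \tilde{\mathcal{R}}^{\rm alg}_*(X \times Y)$ whenever $\xi \in \tilde{\mathcal{R}}^{\rm alg}_*(Y)$. This is a bookkeeping exercise using the projection-formula compatibilities of $\times$ with $f_*$ and with $\tilde{c}_1$ of pulled-back line bundles, which are built into the axioms of an oriented Borel--Moore functor with products; once these are in hand the rest of the argument is formal.
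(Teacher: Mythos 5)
Your construction of $\psi^{\rm alg}_X$ is the paper's, but for the inverse you take a genuinely different route: the paper descends the Levine--Morel inverse $\phi_X\colon\Omega_*(X)\to\tilde{\Omega}_*(X)$ through the comparison diagram linking $\underline{\Omega}_*$, $\underline{\Omega}^{\rm alg}_*$, $\tilde{\Omega}_*$, $\tilde{\Omega}^{\rm alg}_*$, using Lemma \ref{lem:Alg-st-elem} to control the kernel of $\mathbb{L}_*\otimes\underline{\Omega}_*(X)\to\mathbb{L}_*\otimes\underline{\Omega}^{\rm alg}_*(X)$, whereas you build the inverse by hand from a graded section $\tilde{\Phi}\colon\mathbb{L}_*\to\mathcal{Z}_*(k)$ and an external product on $\tilde{\Omega}^{\rm alg}_*$. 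This is viable, and your flagged verification that the external product descends (generators of $\tilde{\mathcal{R}}^{\rm alg}_*$ go to generators, via the axioms relating $\times$ with $f_*$ and with $\tilde{c}_1$ of pulled-back bundles) does work out; note, though, that the paper's route avoids this extra check, since it only ever multiplies at the $\underline{\Omega}^{\rm alg}_*$-level, where Lemma \ref{operations for precobordism} already applies, and both routes rest on the same input from \cite{LM} (Proposition 2.5.15), in your case hidden in the well-definedness of $\bar{\Phi}$.

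There is, however, a genuine gap in the step where you check that $\chi^{\rm alg}_X$ kills the (FGL) relations: you argue by ``taking $f=\mathrm{Id}_X$ and $r=0$ in the defining generating set of $\tilde{\mathcal{R}}^{\rm alg}_*(X)$.'' The generators of $\tilde{\mathcal{R}}^{\rm alg}_*(X)$ are indexed by standard cobordism cycles $[f\colon Y\to X,L_1,\cdots,L_r]$, so $Y$ must be smooth; since the proposition is asserted for all $X\in\Sch_k$, for singular $X$ the identity map is not an admissible $f$, and the element $F(\tilde{c}_1(L),\tilde{c}_1(M))(\eta)-\tilde{c}_1(L\otimes M)(\eta)$ with $L,M\in\Pic(X)$ is not literally a generator of $\tilde{\mathcal{R}}^{\rm alg}_*(X)$. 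The repair is standard but must be made: the (FGL) relation of Definition \ref{defn:st cobordism} is imposed on a class $\zeta=[f\colon Y\to X,L_1,\cdots,L_r]$ with $L,M\in\Pic(X)$; writing $\zeta=f_*(u)$ with $u=\tilde{c}_1(L_1)\circ\cdots\circ\tilde{c}_1(L_r)(1_Y)$ and using the compatibility of $\tilde{c}_1$ and of external multiplication by the lifts $\tilde{\Phi}(a_{i,j})$ with projective push-forward, $\chi^{\rm alg}_X$ of the relation becomes the class of $f_*\{F(\tilde{c}_1(f^*L),\tilde{c}_1(f^*M))(u)-\tilde{c}_1(f^*(L\otimes M))(u)\}$, which is a generator of $\tilde{\mathcal{R}}^{\rm alg}_*(X)$ based at the standard cycle $[f\colon Y\to X]$, hence zero in $\tilde{\Omega}^{\rm alg}_*(X)$. (The same point affects your phrase that the inner bracket in a generator of $\tilde{\mathcal{R}}^{\rm alg}_*(X)$ is ``killed by (FGL) in $\Omega^{\rm alg}_*(X)$'': it is killed in $\Omega^{\rm alg}_*(Y)$ and then pushed forward.) A smaller remark: your verification that $\psi^{\rm alg}_X\circ\chi^{\rm alg}_X=\mathrm{id}$ quietly uses that the $\mathbb{L}_*$-action on $\Omega^{\rm alg}_*(X)$ agrees with external multiplication by lifts of $\Phi(a)$, a nontrivial fact inherited from \cite{LM}; you can sidestep it entirely, because $\chi^{\rm alg}_X\circ\psi^{\rm alg}_X=\mathrm{id}$ already gives injectivity of $\psi^{\rm alg}_X$, and surjectivity is Lemma \ref{lem:surj} --- exactly how the paper concludes.
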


\begin{proof}
It follows from Definition \ref{defn:st cobordism} that the map $\underline{\Omega}^{\rm alg}_*(X) \to \Omega^{\rm alg}_*(X)$ kills $\tilde{\mathcal{R}}^{\rm alg}_* (X)$. This induces the natural map $\psi^{\rm alg}_X \colon  \tilde{\Omega}^{\rm alg}_*(X) \to \Omega^{\rm alg}_*(X)$. We have already shown in Lemma \ref{lem:surj} that this map is surjective. We define an inverse $\phi^{\rm alg}_X \colon \Omega^{\rm alg}_*(X) \to \tilde{\Omega}^{\rm alg}_*(X)$ of $\psi^{\rm alg}_X$ to complete the proof of the proposition.

To do this, we consider the commutative diagram
\begin{equation}\label{eqn:generators1}
\xymatrix@C.9pc{
\underline{\Omega}_*(X) \ar@{->>}[rr] \ar@{->>}[dd] \ar[dr] & & \underline{\Omega}^{\rm alg}_*(X) 
\ar@{->>}[dd] \ar[dr] & \\
& \mathbb{L}_* \otimes \underline{\Omega}_*(X) \ar@{->>}[rr]|->>>>>{\overline{\beta}}  \ar@{->>}[dd]^<<<<<<<{\alpha} \ar@{-->}[dr]|-{\gamma}& & 
\mathbb{L}_* \otimes \underline{\Omega}^{\rm alg}_*(X) \ar@{->>}[dd]|-{\alpha^{\rm alg}}  \ar@{-->}[dl]|-{\gamma^{\rm alg}}\\
\tilde{\Omega}_*(X)\ar@{->>}[rr] \ar@<1ex>[dr]|-{\psi_X} & & \tilde{\Omega}^{\rm alg}_*(X) 
\ar@<1ex>[dr]^{\psi^{\rm alg}_X} & \\
 & \Omega_*(X) \ar@{->>}[rr]_{\beta} \ar@<1ex>[ul]|-{\phi_X} & & \Omega^{\rm alg}_*(X),
\ar@<1ex>@{-->}[ul] |-{\phi_X ^{\rm alg}}}
\end{equation}where $\tilde{\Omega}_* (X)$ is defined in \cite[Definition 2.5.13]{LM}.
All the squares in the above diagram commute and the maps $\psi_X$ and $\phi_X$ are inverses of each other by \cite[Proposition 2.5.15]{LM}. By Lemma \ref{lem:Alg-st-elem}, the kernel of the map $\overline{\beta}$ is generated by elements of the form $a \otimes \left([Y \to X, L]- [Y \to X, M]\right)$, where $L \sim M$ and $a \in \mathbb{L}_*$. On the other hand, such an element maps to $\Phi(a)\left([Y \to X, L]- [Y \to X, M]\right)$ in $\tilde{\Omega}_*(X)$ under $\phi_X \circ \alpha$ (see \eqref{eqn:base}). In particular, these elements are killed in $\tilde{\Omega}^{\rm alg}_*(X)$ under the composite map $\gamma \colon \mathbb{L}_* \otimes \underline{\Omega}_*(X) \to \Omega_*(X) \to \tilde{\Omega}_*(X) \to \tilde{\Omega}^{\rm alg}_*(X)$. Thus it descends to the quotient $\gamma ^{\rm alg} \colon \mathbb{L}_* \otimes \underline{\Omega}^{\rm alg}_*(X) \to  \tilde{\Omega}_* ^{\rm alg}(X)$.

Next, we see from Definition \ref{defn:st cobordism} that the kernel of $\alpha^{\rm alg}$ is generated by elements of the form $F_{\mathbb{L}_*} (\tilde{c}_1 (L) , \tilde{c}_1 (M))([f\colon Y \to X, L_1, \cdots, L_r]) - \tilde{c}_1 (L \otimes M)([f\colon Y \to X, L_1, \cdots, L_r])$ for line bundles $L_i$ on $Y$, and line bundles $L$ and $M$ on $X$. But these elements also lie in the kernel of the map 
$\alpha$. In particular, they die in $\tilde{\Omega}^{\rm alg}_*(X)$ via 
$\gamma$ from which we conclude that $\ker (\alpha^{\rm alg}) \subseteq \ker(\gamma^{\rm alg})$. Hence, the map $\gamma^{\rm alg}$ descends to $\phi^{\rm alg} _X \colon \Omega^{\rm alg}_*(X) \to \tilde{\Omega}^{\rm alg}_*(X)$ which makes all the squares commute. It is clear from the construction that $\phi^{\rm alg}_X \circ \psi^{\rm alg}_X$ is the identity map. In particular, $\psi^{\rm alg}_X$ is injective, thus an isomorphism.
\end{proof}

\begin{proposition}\label{prop:Alg-st}
For $X \in \Sch_k$, the kernel of the natural surjection $\Phi_X \colon \Omega_*(X) \to \Omega^{\rm alg}_*(X)$ is the graded subgroup generated by the cobordism cycles of the form $[f\colon Y \to X, L]-[f\colon Y\to X, M]$, where $L$ and $M$ are algebraically equivalent.
\end{proposition}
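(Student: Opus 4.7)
The inclusion $N \subseteq \ker(\Phi_X)$, where $N \subseteq \Omega_*(X)$ denotes the graded subgroup described in the statement, is immediate from the defining relation (Equiv) of $\Omega_*^{\rm alg}$. The plan for the reverse inclusion is to reduce the problem to its pre-cobordism analogue, Lemma~\ref{lem:Alg-st-elem}, by means of the isomorphisms $\Omega_*(X) \simeq \tilde{\Omega}_*(X)$ from \cite[Proposition 2.5.15]{LM} and $\Omega_*^{\rm alg}(X) \simeq \tilde{\Omega}_*^{\rm alg}(X)$ from Proposition~\ref{prop:generators}. Under these identifications, $\Phi_X$ becomes the map on quotients induced by $\underline{\Phi}_X \colon \underline{\Omega}_*(X) \to \underline{\Omega}_*^{\rm alg}(X)$.

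The key technical step will be to verify the identity
\[
\tilde{\mathcal{R}}_*^{\rm alg}(X) \;=\; \underline{\Phi}_X\bigl(\tilde{\mathcal{R}}_*(X)\bigr),
\]
where $\tilde{\mathcal{R}}_*(X) \subseteq \underline{\Omega}_*(X)$ is the Levine--Morel FGL-relation subgroup from \cite[Definition 2.5.13]{LM}. The inclusion $\supseteq$ follows from Lemma~\ref{operations for precobordism}: $\underline{\Phi}_X$ commutes with projective push-forwards and first Chern class operators, and therefore (after lifting Lazard coefficients through $\mathcal{Z}_*(k) \twoheadrightarrow \Omega_*(k) \simeq \mathbb{L}_*$) also with the formation of $F\bigl(\tilde{c}_1(L), \tilde{c}_1(M)\bigr)(-)$. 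For the direction $\subseteq$, I will exploit the surjectivity of $\underline{\Phi}_Y$ for every $Y$: each generator of $\tilde{\mathcal{R}}_*^{\rm alg}(X)$ of the form \eqref{eqn:Cob-reln} features an inner argument $\eta \in \underline{\Omega}_*^{\rm alg}(Y)$ which lifts to some $\wt{\eta} \in \underline{\Omega}_*(Y)$, and the analogous expression with $\wt{\eta}$ in place of $\eta$ defines a generator of $\tilde{\mathcal{R}}_*(X)$ mapping to the prescribed element.

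Granted this equality, a standard kernel computation then yields
\[
\underline{\Phi}_X^{-1}\bigl(\tilde{\mathcal{R}}_*^{\rm alg}(X)\bigr) \;=\; \tilde{\mathcal{R}}_*(X) + \ker(\underline{\Phi}_X) \;=\; \tilde{\mathcal{R}}_*(X) + \underline{N},
\]
where Lemma~\ref{lem:Alg-st-elem} identifies $\ker(\underline{\Phi}_X)$ with the subgroup $\underline{N} \subseteq \underline{\Omega}_*(X)$ generated by the elements $[Y \to X, L] - [Y \to X, M]$ with $L \sim M$. Passing to the quotient by $\tilde{\mathcal{R}}_*(X)$ and invoking $\Omega_*(X) \simeq \tilde{\Omega}_*(X)$, this translates into $\ker(\Phi_X) = q(\underline{N})$; since the generators of $\underline{N}$ descend verbatim under $q$ to the generators of $N$, we conclude $\ker(\Phi_X) = N$.

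The main obstacle is the lifting half of the key identity: one must take care that the surjectivity of $\underline{\Phi}_Y$, combined with the compatibility of $\underline{\Phi}$ with the ambient operations from Lemma~\ref{operations for precobordism}, genuinely produces an FGL generator of $\tilde{\mathcal{R}}_*(X)$ whose image under $\underline{\Phi}_X$ recovers a prescribed generator of $\tilde{\mathcal{R}}_*^{\rm alg}(X)$. Once this is in place, the rest of the argument is routine bookkeeping in the commutative square relating pre-cobordism and cobordism.
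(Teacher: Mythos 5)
Your proposal is correct and follows essentially the same route as the paper: both reduce to the pre-cobordism level via \cite[Proposition 2.5.15]{LM} and Proposition~\ref{prop:generators}, use Lemma~\ref{lem:Alg-st-elem} to identify $\ker(\underline{\Phi}_X)$, and rely on the surjectivity of $\tilde{\mathcal{R}}_*(X) \to \tilde{\mathcal{R}}^{\rm alg}_*(X)$ (your ``key identity,'' which the paper obtains directly from the definitions by the same lifting of $\eta$). The only cosmetic difference is that the paper packages the final step as a snake-lemma argument on the two short exact sequences, whereas you carry out the equivalent preimage computation $\underline{\Phi}_X^{-1}(\tilde{\mathcal{R}}^{\rm alg}_*(X)) = \tilde{\mathcal{R}}_*(X) + \ker(\underline{\Phi}_X)$ by hand.
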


\begin{proof}
In the commutative diagram
\begin{equation}\label{eqn:Alg-st1}
\xymatrix@C.9pc{0 \ar[r] & \tilde{\mathcal{R}}_* (X) \ar[r] \ar[d] & \underline{\Omega}_*(X) \ar[r] \ar[d]^{\underline{\Phi}_X} & \Omega_*(X) \ar[r] \ar[d]^{\Phi_X} & 0 \\
0 \ar[r] & \tilde{\mathcal{R}}^{\rm alg}_* (X) \ar[r] & \underline{\Omega}^{\rm alg}_*(X) \ar[r] & \Omega^{\rm alg}_*(X) \ar[r] & 0,}
\end{equation}
the top row is exact by \cite[Proposition 2.5.15]{LM} and the bottom row is exact by Proposition \ref{prop:generators}. The left vertical arrow in this diagram is surjective by the definition of $\tilde{\mathcal{R}}^{\rm alg}_* (X)$ above and that of $\tilde{\mathcal{R}}_* (X)$ in \cite[Lemma 2.5.14]{LM}. Hence, the map $\ker \left(\underline{\Phi}_X\right) \to \ker \left(\Phi_X\right)$ is surjective by the snake lemma. On the other hand, Lemmas \ref{lem:trivial} and 
\ref{lem:Alg-st-elem} imply that the group $\ker \left( \underline{\Phi}_X \right)$ is generated by cobordism cycles of the form $[f\colon Y \to X, L]- [f\colon Y \to X, M]$ where $L \sim M$. This proves the proposition.  
\end{proof}


{\bf{Proof of Proposition~\ref{prop:universal lev0}}:}
The theory $\Omega_* ^{\rm alg}$ satisfies Definition \ref{def:respects alg equiv} in view of the relation (Equiv) of Definition \ref{precobordism definition}. To prove its universality, we first recall from \cite[Theorem 2.4.13]{LM} that 
the algebraic cobordism $\Omega_*$ of Levine-Morel is a universal oriented Borel-Moore $\mathbb{L}_*$-functor of geometric type. So, there is a morphism $\theta  \colon \Omega_* \to A_*$ of oriented Borel-Moore $\mathbb{L}_*$-functor of geometric type on $\Sch_k$. 

To show that it induces $\theta_A \colon \Omega_* ^{\rm alg} \to A_*$, 
it is enough to show using Proposition \ref{prop:Alg-st} that $\theta (\eta) = 0$ in $A_*(X)$ for $\eta \colon= [f\colon Y \to X, L] - [f\colon Y \to X, M]$, where $L \sim M$. This is equivalent to 
$f_* \left(\left(\tilde{c}_1(L) - \tilde{c}_1 (M) \right) (1_Y) \right) = 0 \in A_* (X).$ But this holds by the assumption that $\tilde{c}_1 (L) = \tilde{c}_1 (M)$ on $A_* (Y)$. Hence, we have the induced morphism $\theta_A \colon \Omega_* ^{\rm alg} \to A_*$ as desired.
$\hspace*{12.8cm} \hfil \square$

Some fundamental properties of $\Omega^{\rm alg}_*$ will be studied in \S~\ref{subsection :Fundamental-STC} and \S~\ref{section:OCT}. We shall also show that $\Omega_* ^{\rm alg}$ is an oriented cohomology theory on $\Sm_k$ and an oriented Borel-Moore homology theory on $\Sch_k$ (see \cite[Definitions 1.1.2, 5.1.3]{LM}), equipped with a similar universal property.

\section{Algebraic double-point cobordism $\omega_* ^{\rm alg}$}\label{adp cobordism}
In this section, we recall the cobordism theory $\omega_*$ of \cite{LP} based on the double-point relations and study its \emph{algebraic equivalence} analogue $\omega_* ^{\rm alg}$ following the suggestion of  Levine and Pandharipande 
in [\emph{ibid.}, \S11.2]. 


\subsection{Double-point cobordism after Levine-Pandharipande}\label{subsection:LP}
The description of $\omega_*$ by Levine and Pandharipande is simpler than that of Levine and Morel's $\Omega_*$ in \cite{LM} in the following respects: first, the cobordism cycles are simpler, \emph{i.e.}, without the attached line bundles and the artificial imposition of the formal group law as in Definitions \ref{cobordism cycle LM} and \ref{defn:st cobordism}, and second, the relations are given by a single sort of morphisms called \emph{double-point degenerations}.

The cobordism cycles in the sense of Levine-Pandharipande, recalled below, will also be called cobordism cycles whenever no confusion arises. 

\begin{definition}[{\cite[\S 0.2]{LP}}] Let $X \in \Sch_k$. An \emph{integral cobordism cycle} on $X$ is the isomorphism class over $X$ of a projective morphism $f\colon Y \to X$, where $Y$ is smooth and integral. This will be denoted by $[f\colon Y \to X]$. Its dimension is by definition $\dim Y$. If $Y = \coprod Y_i \in \Sm_k$ where each $Y_i$ is integral, then given a projective morphism $f\colon Y \to X$, the cobordism cycle $[f\colon Y \to X]$ is defined to be the sum of $[f|_{Y_i}\colon Y_i \to X]$. Let $\mathcal{M}_* (X)^+$ be the free abelian group on the set of all integral cobordism cycles over $X$, and let $\mathcal{M}_d (X)^+$ be its subgroup generated by the cobordism cycles of dimension $d$. An element of  $\mathcal{M}_* (X)^+$ will be a called \emph{cobordism cycle}.
\end{definition}

Now we recall the notion of double-point degenerations and the associated relations from \cite[\S 0.2,  \S 0.3, \S11.2]{LP}.

\begin{definition}\label{double point degeneration} Let $Y \in \Sm_k$ be of pure dimension. Let $(C, p)$ be a pair consisting of a smooth projective connected curve $C$ and a $k$-rational point $p \in C$. 

(1) A morphism $\pi\colon Y \to C$ of scheme is a \emph{double-point degeneration} over $p\in C$ if $\pi^{-1} (p)$ can be written as $\pi^{-1} (p) = A \cup B,$ where $A$ and $B$ are smooth closed subschemes of $Y$ of codimension $1$ intersecting transversally. The intersection $D = A \cap B$ is called the \emph{double-point locus} of $\pi$ over $p \in C$. We allow $A, B,$ or $D$ to be empty. Let $N_{A/D}$ and $N_{B/D}$ denote the normal bundles of $D$ in $A$ and $B$, respectively. As in \cite[\S 0.2]{LP}, the projective bundles $ \mathbb{P} (O_D \oplus N_{A/D})$ and $\mathbb{P} (O_D \oplus N_{B/D})$ over $D$ are isomorphic. Either of these is denoted by $\mathbb{P} (\pi) \to D$.

(2) Let $X \in \Sch_k$ and let $pr_1, pr_2$ be the projections from $X \times C$ to $X$ and $C$, respectively. Let $Y \in \Sm_k$ be of pure dimension, and let $g\colon Y \to X \times C$ be a projective morphism such that $\pi = pr_2 \circ g \colon Y \to C$ is a double-point degeneration over $p\in C$. For each regular value  $\zeta \in C(k)$ of $\pi$, the triple $(g, p, \zeta)$ is called a \emph{double-point cobordism} with the degenerate fiber over $p \in C$ and the smooth fiber over $\zeta$. The associated \emph{double-point relation} over $X$ is 
given by 
\[
\partial_C (g, p, \zeta)\colon = [Y_{\zeta} \to X] - [A \to X] - [B \to X] + [\mathbb{P} (\pi) \to X]  = 0 \ {\rm in} \ \  \mathcal{M}_* (X)^+,
\]
where $Y_{\zeta} \colon= \pi^{-1} (\zeta)$. 

(3) Let $\mathcal{R}_* ^{\rm rat} (X) \subset \mathcal{M}_* (X)^+$ be the subgroup generated by all double-point relations over $X$ over the pair $(C, p) = (\mathbb{P}^1, 0)$. This is the group of \emph{rational double-point relations}. This group was denoted by $\mathcal{R}_* (X)$ in \cite{LP}.

(4) Let $\mathcal{R}_* ^{\rm alg} (X) \subset \mathcal{M}_* (X)^+ $ be the subgroup generated by all double-point relations over $X$ over all pairs $(C, p)$ of smooth projective connected curve $C$ and a point $p \in C(k)$. This is the group of \emph{algebraic double-point relations}. 
\end{definition}


\begin{definition}[Levine-Pandharipande]\label{def:algebraic dp-cobordism}Let $X \in \Sch_k$.

(1) The \emph{(rational) double-point cobordism theory} $\omega_* (X)$ is the quotient 
\[
\omega_*(X)= \mathcal{M}_* (X)^+/ \mathcal{R}_* ^{\rm rat} (X). 
\]

(2) The \emph{algebraic double-point cobordism theory} $\omega_* ^{\rm alg} (X)$ is the quotient 
 \[
 \omega_* ^{\rm alg} (X) = \mathcal{M}_* (X)^+ / \mathcal{R}_* ^{\rm alg} (X). 
 \]  
\end{definition}

\subsection{Basic structures}
Some of the following basic properties of $\omega^{\rm alg}_*$ follow
essentially from the definition and some analogous
constructions in \cite[\S 3.1]{LP}.

\begin{proposition}\label{prop:Basic-small}
The functor $X \mapsto \omega_* ^{\rm alg} (X)$ on $\Sch_k$ has the following 
structures. 

\emph{(1) Projective push-forward:} For a projective morphism
$g\colon X \to X'$, we have $g_* \colon \omega_* ^{\rm alg} (X) \to \omega_* ^{\rm alg} (X')$ given by $g_* ([f\colon Y \to X]) = [g \circ f \colon Y \to X']$. This satisfies $(g_1 \circ g_2)_* = {g_1}_* \circ {g_2}_*$ when $g_1$ and $g_2$ are both projective.

\emph{(2) Smooth pull-back:} For a smooth quasi-projective morphism
$g\colon X' \to X$ of relative dimension $d$, we have $g^* \colon \omega_* ^{\rm alg} (X) \to \omega_{* + d} ^{\rm alg} (X')$ given by $g^* ( [ f\colon Y \to X]) = [pr_2 \colon Y \times_X X' \to X']$. This satisfies $(g_1 \circ g_2)^* = g_2 ^* \circ g_1 ^*$ when $g_1$ and $g_2$ are both smooth.

\emph{(3) External product:} We have $\times \colon \omega_* ^{\rm alg} (X) \times  \omega_* ^{\rm alg} (X') \to \omega_* ^{\rm alg} (X \times X')$ given by $[f\colon Y \to X] \times [f' \colon Y' \to X'] = [f \times f' \colon Y \times Y' \to X \times X']$.

\emph{(4) Unit:} The class $1_{\Spec (k)} \in \omega_0 ^{\rm alg}(k)$ is the unit for the external product on $\omega_* ^{\rm alg}$. 

\emph{(5) Chern classes:} For every line bundle $L$ on $X$, there is a Chern class operation $\tilde{c}_1(L) \colon \omega^{\rm alg}_*(X) \to \omega^{\rm alg}_{*-1}(X)$ which is compatible with smooth pull-back and projective push-forward.
\end{proposition}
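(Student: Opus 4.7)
The plan is to observe that $\mathcal{R}_*^{\rm rat}(X) \subseteq \mathcal{R}_*^{\rm alg}(X)$ gives a natural surjection $\omega_*(X) \twoheadrightarrow \omega_*^{\rm alg}(X)$, so every operation already constructed on $\omega_*$ in \cite[\S 3.1]{LP} is a candidate, and I only need to verify that each preserves the larger relation subgroup $\mathcal{R}_*^{\rm alg}(-)$. In each case I would first define the operation directly on the level of $\mathcal{M}_*(-)^+$ by the stated formula (which matches the construction in \cite{LP}), establish the required functoriality at the cycle level, and then show that the operation sends a generator of $\mathcal{R}_*^{\rm alg}(X)$ to an element of the appropriate $\mathcal{R}_*^{\rm alg}$.

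For (1), given a projective $g\colon X \to X'$ and a double-point cobordism $(h\colon W \to X \times C, p, \zeta)$, the composite $(g \times \id_C) \circ h \colon W \to X' \times C$ is again a double-point cobordism over $(C,p)$ with the same degenerate locus $A \cup B$, the same double-point locus $D$, and the same projective bundle $\mathbb{P}(\pi)$; applying $g_*$ to the defining relation produces the double-point relation associated with $((g \times \id_C)\circ h, p, \zeta)$. For (2), given a smooth quasi-projective $g\colon X' \to X$ of relative dimension $d$, base change the double-point cobordism along $g \times \id_C \colon X' \times C \to X \times C$: smoothness is preserved by base change along a smooth morphism, and the decomposition $\pi^{-1}(p) = A \cup B$ with transverse intersection $D$ pulls back to $(A \times_X X') \cup (B \times_X X')$ with transverse intersection $D \times_X X'$, while the projective bundle construction commutes with the smooth base change because so do the normal bundles $N_{A/D}$ and $N_{B/D}$. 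For (3), the external product of a double-point cobordism over $X$ with a cobordism cycle $[f' \colon Y' \to X']$ is the double-point cobordism $h \times f'\colon W \times Y' \to X \times X' \times C$ whose degenerate locus is $(A \cup B) \times Y'$ and double-point locus is $D \times Y'$; the relation $\partial_C(h,p,\zeta) \times [f'\colon Y' \to X']$ is precisely the double-point relation over $X \times X'$ attached to this family. Item (4) is immediate.

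The main obstacle is (5). Following the method of \cite[\S 3.1]{LP}, given $[f\colon Y \to X]$ and a line bundle $L$ on $X$, I would choose sections of $f^*L \otimes A^{\otimes N}$ and of $A^{\otimes N}$, for $A$ very ample on $Y$ and $N \gg 0$, so that by Bertini in characteristic zero both zero-loci $Z_1, Z_2 \subset Y$ are smooth divisors with $O_Y(Z_1 - Z_2) \cong f^*L$. One then sets $\tilde{c}_1(L)[f\colon Y \to X] \colon= [Z_1 \to X] - [Z_2 \to X] + (\text{correction})$, where the correction makes the answer independent of all choices modulo rational double-point relations, exactly as in \cite{LP}. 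Well-definedness modulo $\mathcal{R}_*^{\rm alg}(X)$ is then automatic because $\mathcal{R}_*^{\rm rat} \subseteq \mathcal{R}_*^{\rm alg}$: two choices of smooth representatives are connected by a pencil in $\P^1$, which gives a rational, hence algebraic, double-point relation. Compatibility with projective push-forward is tautological. Compatibility with smooth pull-back is verified by selecting the Bertini sections generically enough so that their zero-loci remain smooth after smooth base change; the resulting equality of classes in $\omega_{*-1}^{\rm alg}$ uses that the defining choices can be made simultaneously on $X$ and $X'$ by a further Bertini argument.

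The hard part is the verification for (5) that the Chern class operator so defined on $\omega_*(X)$ actually \emph{descends} along the quotient $\omega_*(X) \twoheadrightarrow \omega_*^{\rm alg}(X)$, i.e.\ that for an algebraic double-point cobordism $(h\colon W \to X \times C, p, \zeta)$ with $C$ an arbitrary smooth projective connected curve, the value $\tilde{c}_1(L)(\partial_C(h,p,\zeta))$ lies in $\mathcal{R}_*^{\rm alg}(X)$. The strategy is to pick a representative smooth divisor for $c_1(pr_1^* L)$ on the total space $W$ (again by Bertini), restrict it to the smooth and to the degenerate fibers, and check that these restrictions give precisely a double-point relation over $(C,p)$ in $\mathcal{R}_*^{\rm alg}(X)$; this is where the extra flexibility of allowing arbitrary curves, as opposed to only $\P^1$, is used crucially.
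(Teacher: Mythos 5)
Your proposal is correct and follows essentially the same route as the paper: for (1)--(4) the paper argues exactly as you do, checking on generators that composing with $g\times{\rm Id}_C$, base changing along $g\times{\rm Id}_C$, and taking external products with a cobordism cycle send algebraic double-point relations to algebraic double-point relations. For (5) the paper simply states that the construction follows the arguments of \cite[\S 4, \S 9]{LP} (globally generated case first, then the general case) and omits the details, so your additional sketch of the Bertini construction and of the descent check along $\omega_*(X)\twoheadrightarrow\omega_*^{\rm alg}(X)$ is a reasonable filling-in of what the paper delegates to \cite{LP}.
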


\begin{proof} (1) Given a projective morphism $g\colon X \to X'$, we already have $g_* \colon \mathcal{M}_* (X)^+ \to \mathcal{M}_* (X')^+$. It remains to show that $g_*$ sends the algebraic double-point relations $\mathcal{R}_* ^{\rm alg} (X)$ into  $\mathcal{R}_* ^{\rm alg} (X')$. Indeed, given an algebraic double-point cobordism $(h, p, \zeta)$ over $X$, where $h\colon Y \to X \times C$ with a smooth projective connected curve $C$, we get an algebraic double-point cobordism $( (g \times {\rm Id}_C ) \circ h, p, \zeta)$ over $X'$, where $(g \times {\rm Id}_C) \circ h\colon Y \to X' \times C$. We immediately note that $g_* (\partial_C (h, p, \zeta)) = \partial_C ((g \times {\rm Id}_C) \circ h, p, \zeta)$. This proves (1).

(2) Given a smooth and quasi-projective morphism $g \colon X' \to X$, we have $g^* \colon \mathcal{M}_* (X) ^+ \to \mathcal{M}_* (X')^+$. It remains to show that $g^*$ sends $\mathcal{R}_* ^{\rm alg} (X)$ into $\mathcal{R}_* ^{\rm alg} (X')$. This follows by observing that given an algebraic double-point cobordism $(h, p, \zeta)$ over $X$, the pull-back $(g^* h, p, \zeta)$, given by the second projection of the fiber product $Y' \colon = Y \times _{X \times C} (X' \times C) \to X' \times C$, is an algebraic double-point cobordism over $X'$.

(3) The map $\times \colon \mathcal{M}_* (X)^+ \times \mathcal{M}_* (X')^+ \to \mathcal{M}_* (X \times X')^+$  is defined on the level of cobordism cycles. For an algebraic double-point cobordism $(h, p, \zeta)$ over $X$ as before, for each $[f\colon Y' \to X'] \in \mathcal{M}_* (X')^+$, we get an induced algebraic double-point cobordism $ (h \times f, p, \zeta)$, where $h \times f \colon Y \times Y' \to X \times X' \times C$. Similarly, interchanging the role of $X$ and $X'$, we see that $\times$ descends onto the level of $\omega_* ^{\rm alg} (-)$. This proves (3). Part (4) is immediate.

(5) The construction of the first Chern class operation $\tilde{c}_1(L)$ on $\omega^{\rm alg}_*(X)$ follows the same arguments as for $\omega_*(X)$ by first assuming that $L$ is globally generated and then deducing the general case, as in \cite[\S 4 and \S 9]{LP}. We omit the details.
\end{proof}

\section{The basic exact sequence}\label{section:Main-sequence}

Let $X \in \Sch_k$. By \cite[Theorem 1]{LP}, the natural map $\omega_*(X) \to \Omega_*(X)$ is an isomorphism. We often use this identification implicitly. Let $(C, t_1, t_2)$ denote a smooth projective connected curve $C$ with distinct points $t_1, t_2 \in C(k)$ with the inclusions $i_j \colon X \times\{ t_j \} \to X \times  C$. By the existence of the l.c.i. pull-backs on $\Omega_*$ in \cite[\S 6.5]{LM}, we have maps $i^*_1 , i^*_2 \colon \omega_*(X \times C) \to \omega_*(X)$. By definition, we also have a natural surjection $\Psi_X \colon\omega_*(X) \to \omega^{\rm alg}_*(X)$. The main theorem of the section is:

\begin{theorem}\label{thm:FES}Let $X \in \Sch_k$. The sequence
\[
{\underset{(C,t_1, t_2)}\bigoplus} \omega_*(X \times C) \xrightarrow{i^*_1 - i^*_2} \omega_*(X) \xrightarrow{\Psi_X} \omega^{\rm alg}_*(X) \to 0,
\]
where $(C, t_1, t_2)$ runs over the equivalence classes of all triples consisting of a smooth projective connected curve $C$ and two distinct points $t_1, t_2 \in C(k)$, is exact.
\end{theorem}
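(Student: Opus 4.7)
The plan is to establish the two inclusions that make the sequence exact at $\omega_*(X)$; surjectivity of $\Psi_X$ is immediate from $\omega_*^{\rm alg}(X) = \mathcal{M}_*(X)^+/\mathcal{R}_*^{\rm alg}(X)$. Since $\mathcal{R}_*^{\rm rat}(X) \subseteq \mathcal{R}_*^{\rm alg}(X)$, the kernel of $\Psi_X$ coincides with the image of $\mathcal{R}_*^{\rm alg}(X)$ in $\omega_*(X)$, so it is generated by the double-point relations $\partial_C(g,p,\zeta)$ attached to smooth projective connected curves $C$, degeneration points $p$, and regular values $\zeta$.

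The central computation is that for any projective $g\colon Y \to X \times C$ with $Y$ smooth, setting $\pi = pr_2 \circ g$, one has
\[
i_\zeta^*[g] = [Y_\zeta \to X] \quad \text{when $\zeta$ is a regular value of $\pi$,}
\]
\[
i_p^*[g] = [A \to X] + [B \to X] - [\mathbb{P}(\pi) \to X] \quad \text{when $p$ is a double-point degeneration of $\pi$,}
\]
both identities holding in $\omega_*(X) \cong \Omega_*(X)$. The first is obvious since $Y_\zeta$ is smooth; the second is the double-point formula for the l.c.i.\ Gysin pullback, which is in effect the identity underlying the Levine-Pandharipande proof of $\omega_* \cong \Omega_*$, and whose ingredients are collected in the Appendix. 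Accepting these, the inclusion $\ker(\Psi_X) \subseteq \im(i_1^* - i_2^*)$ is immediate: taking $(t_1, t_2) = (\zeta, p)$ one reads off $\partial_C(g,p,\zeta) = i_{t_1}^*[g] - i_{t_2}^*[g]$.

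For the reverse inclusion $\im(i_1^* - i_2^*) \subseteq \ker(\Psi_X)$, we check on a generator $\alpha = [g\colon Y \to X \times C]$ with $Y$ smooth projective integral. If $\pi = pr_2 \circ g$ is constant, then both $i_{t_j}^*[g]$ vanish in $\omega_*(X)$ by the self-intersection formula, since the codimension-one embedding $X \hookrightarrow X \times C$ at any $k$-point has trivial normal bundle. If $\pi$ is dominant (and therefore flat), we invoke semistable reduction in characteristic zero to obtain a finite cover $\phi\colon C' \to C$ and a projective birational modification $\tilde Y \to Y \times_C C'$ with $\tilde Y$ smooth and with $\tilde\pi\colon \tilde Y \to C'$ having only smooth or double-point degenerate fibers; such modifications affect the pullback classes only by rational double-point relations, which are already trivial in $\omega_*(X)$. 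Reduced to this case, the displayed formulas exhibit $i_{t_1}^*[g] - i_{t_2}^*[g]$ as $\partial_C(g, t_2, t_1) - \partial_C(g, t_2, t_2)$ (with $B$ and $\mathbb{P}(\pi)$ empty in the regular-value case), a difference of two algebraic double-point relations, hence zero in $\omega_*^{\rm alg}(X)$.

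The principal obstacle is the dominant case above: the semistable reduction step changes both source and base, and one must verify in detail that the resulting corrections to $i_{t_j}^*[g]$ are absorbed by $\mathcal{R}_*^{\rm rat}(X)$. This requires a careful compatibility analysis of l.c.i.\ Gysin pullbacks with projective birational modifications and with finite base change on $C$, resting on the fact that $\omega_*$ is engineered to be invariant under rational double-point modifications.
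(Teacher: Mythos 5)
Your first inclusion, $\ker(\Psi_X) \subseteq \im(i_1^*-i_2^*)$, is essentially the paper's argument: one computes $i_\zeta^*[g]=[Y_\zeta\to X]$ and $i_p^*[g]=[A\to X]+[B\to X]-[\mathbb{P}(\pi)\to X]$ using the refined (divisor) pull-back and the strict normal crossing divisor class of \cite[\S 3.1]{LM} together with \cite[Lemma 3.3]{LP}, and reads off that every algebraic double-point relation is of the form $-(i_p^*-i_\zeta^*)(\gamma)$. Accepting that computation as you do is fine; the paper derives it from the same ingredients collected in its appendix.

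The reverse inclusion is where your proposal breaks down. You want to show that for an arbitrary $[g\colon Y\to X\times C]$ the element $(i_1^*-i_2^*)[g]$ is a combination of algebraic double-point relations, and you propose to achieve this by semistable reduction on the family $\pi\colon Y\to C$. Two things go wrong. First, semistable reduction produces fibres that are reduced normal crossing divisors, not double-point degenerations: as soon as the fibres have dimension $\geq 2$ they may have many components and triple (or deeper) intersections, so the hypothesis "only smooth or double-point degenerate fibers" is simply not what semistable reduction gives; reducing arbitrary normal-crossing degenerations to double-point ones is precisely the hard content of \cite{LP} and cannot be quoted as a one-line modification. Second, and more fundamentally, your assertion that the finite base change $C'\to C$ together with a projective birational modification of $Y\times_C C'$ "affects the pullback classes only by rational double-point relations" is exactly the point that needs proof, and nothing in the proposal supplies it: the base change rescales degrees and replaces the pair $(C,t_1,t_2)$ by a different curve with possibly different splitting behaviour over $t_1$ and $t_2$, so there is no evident comparison between $i_{t_j}^*[g]$ and the Gysin pull-backs of the new family, let alone one controlled by $\mathcal{R}^{\rm rat}_*(X)$. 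You flag this yourself as "the principal obstacle," but that obstacle is the theorem. (Your final bookkeeping is also off: $\partial_C(g,t_2,t_2)$ requires $t_2$ to be simultaneously the degeneration point and a regular value; with both special fibres degenerate one must compare each against a third, common regular value.)

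The paper takes a different and complete route here. When $X$ is smooth, the ambient scheme $X\times C$ is smooth, so the moving lemma \cite[Proposition 3.3.1]{LM} lets one replace $g$ within its class in $\omega_*(X\times C)$ by a morphism transverse to both $i_1$ and $i_2$; then both fibres are smooth and $(i_1^*-i_2^*)[g]$ is literally a single algebraic double-point relation, with no degeneration analysis at all. For general $X$ one inducts on $\dim X$: resolve $X$, use the blow-up formula \cite[Proposition 3.2.4]{LM} to write the class as a piece coming from the resolution $\widetilde{X}$ (handled by the smooth case) plus a piece supported on the singular locus $Z$ (handled by induction), with Tor-independence and \cite[Proposition 6.5.4]{LM} ensuring the refined pull-backs commute with the relevant projective push-forwards. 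Your proposal contains no substitute for either of these mechanisms, so as written it does not prove the exactness at $\omega_*(X)$.
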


We begin with some remarks on cobordism cycles associated to strict normal crossing divisors on smooth schemes.

\subsection{Remarks on divisor classes}\label{background for Omega adpc} 
Recall from \cite[\S 3.1]{LM} that given a strict normal crossing divisor $E$ on $Y \in \Sm_k$ with the support $\iota\colon |E| \to Y$, there is a class $[E \to |E|] \in \Omega_* (|E|)$ that satisfies $\iota_* ([E \to |E|]) = [Y \to Y, O_Y (E)] = \tilde{c}_1 (O_Y (E))(1_Y)$. 
Since we have a natural surjection $\Omega_*  \to \Omega_* ^{\rm alg}$, the class $[E \to |E|]$ makes sense also in $\Omega_* ^{\rm alg} (|E|)$. 

The construction $[E \to |E|] \in \Omega_* (|E|)$ uses the formal group law $F$ for $\Omega_*(k)$. We look at only the following case from \cite[\S 3.1]{LM}. The special case we need is when $E= E_1 + E_2$, where $E_1$ and $ E_2$ are transversal smooth divisors on $Y \in \Sm_k$.  Let $\iota_D \colon D = E_1 \cap E_2 \to Y$ be the inclusion. We let $O_D (E_i)\colon= \iota_D ^* \left(O_Y (E_i)\right)$. 
The class $[E \to Y] \in \Omega_* (Y)$ is defined as
\[
 [E\to Y] \colon=  [E_1 \to Y] + [E_2 \to Y]  +{\iota_D }_* \left( F^{1,1} ( \tilde{c}_1 (O_D (E_1)) , \tilde{c}_1 (O_D (E_2)))(1_D) \right),
\]
where $F^{1,1}(u,v) = \sum_{i, j \geq 1} a_{i,j} u^{i-1} v^{j-1}\in \Omega_*  (k)[[u,v]]$ and $a_{i,j}\in \Omega_{i+j -1} (k)$ are the coefficients of the formal group law.

In addition, suppose that $O_D (E)\colon= \iota_D ^* \left(O_Y (E)\right)$ is trivial, \emph{i.e.}, $O_D (E_1) \simeq O_D (E_2)^{-1}$ on $D$. Let $\mathbb{P}_D \to D$ be the $\mathbb{P}^1$-bundle $\mathbb{P} (O_D \oplus O_D (E_1))$. By \cite[Lemma 3.3]{LP}, we have
\begin{equation}\label{pre-adpc equation}
F^{1,1} (\tilde{c}_1 (O_D (E_1)), \tilde{c}_1 (O_D (E_2))) (1_D) = - [\mathbb{P}_D \to D] \in \Omega_* (D).
\end{equation}
Hence, we have the following equation in $\Omega_* (Y)$ (and hence in 
$\Omega_* ^{\rm alg} (Y)$):
\begin{equation}\label{adpc equation}
[E \to Y] - [E_1 \to Y] - [E_2 \to Y] + [ \mathbb{P}_D \to Y] = 0.
\end{equation}

\subsection{Proof of Theorem \ref{thm:FES}}  Consider the commutative diagram with the top exact row:
\begin{equation}\label{eqn:FES*1}
\xymatrix{ 0 \to \mathcal{R}^{\rm alg} _* (X) \ar[r] \ar[rd] ^{\theta} & \mathcal{M} _* (X)^+ \ar@{->>}[d] \ar[r] & \omega_* ^{\rm alg} (X) \ar@{=}[d] \ar[r] & 0 \\
 \underset{   (C, t_1, t_2)}{\bigoplus}\omega_* (X \times C) \ar[r] _{ \ \ \ \ \ \theta'} & \omega_* (X) \ar[r]_{\Psi_X} & \omega_* ^{\rm alg} (X) \ar[r] & 0, }
\end{equation} 
where $\theta$ is the composition of the two arrows and $\theta'$ is the sum of the maps $i^*_1-i^*_2$. We want to prove that the bottom row is exact. It is apparent that $\ker ( \Psi_X ) = {\rm Im} (\theta)$, thus it suffices to prove that ${\rm Im} (\theta) = {\rm Im} (\theta ')$.

We prove ${\rm Im} (\theta) \subseteq {\rm Im} (\theta')$ first. Let $(g, p, \zeta)$ be a double-point cobordism as in Definition \ref{double point degeneration}, \emph{i.e.}, a projective $g \colon Y \to X \times C$, two points $p, \zeta \in C(k)$ such that for $\pi = pr_2 \circ g$ we have $\pi^{-1} (p) = A \cup B$. 
Set $\gamma\colon= [g\colon Y \to X \times C]\in \omega_* (X \times C)$. 


Let $i_p \colon X\times\{p\} \to X \times C$ be the inclusion and let
$X_p\colon=X \times \{ p\}$. Since the divisor $E\colon=g^{*}(X_p) = A + B$ is strict normal crossing, we have $\gamma \in \Omega_* (X \times C)_{X_p}$ (see Definitions \ref{defn:cobord cycle with D}, \ref{defn:cobord cycle with D 1}, \ref{defn:cobord cycle with D 2}).
By Theorem \ref{thm:CML}, Definition \ref{defn:Intersection*} and \cite[Lemma 6.5.6]{LM}, we see that $i^*_p(\gamma) = g'_*([E \to |E|]) \in \omega_*(X_p)$, where $g'=g|_{|E|}\colon|E| \to X_p$. Consider now the commutative diagram:
\[
\xymatrix{
|E| \ar[r]^{\iota_E} \ar[d]_{g'} & Y \ar[d]^{g} \ar[dr]^{\pi'} & \\
X_p \ar[r]^{i_p} & X \times C \ar[r]^>>>>>{pr_1} & X.}
\]

Note that $pr_1 \circ i_p = {\rm Id}_X$ via $X \simeq X_p$ and $\pi'$ is projective. Thus, $i^*_p (\gamma)= g_*' ([E \to |E|]) = {pr_1} _*  {i_p}_* g'_*\left([E \to |E|]\right) = \pi'_* {\iota_{E}}_*\left([E \to |E|]\right) = \pi'_*([E \to Y]) =^{\dagger} [A \to X] + [B \to X ] - [ \mathbb{P}(\pi) \to X]$ in $\omega_*(X)$, where $\dagger$ follows from \eqref{adpc equation}. Since $Y_\zeta$ is smooth, $i^*_\zeta(\alpha) = [Y_\zeta \to X]$. Hence, we get $\theta (\partial_C(g, p, \zeta)) =[Y_{\zeta} \to X] - [A \to X] - [B \to X ] + [ \mathbb{P}(\pi) \to X] =  -(i_p ^* - i_{\zeta}^*)(\gamma)$. That is, ${\rm Im} (\theta ) \subseteq {\rm Im} (\theta')$.

To prove the reverse inclusion ${\rm Im} (\theta) \supseteq {\rm Im} (\theta')$, we consider two cases.

\emph{Case 1:} First assume that $X$ is smooth. For $(C, t_1, t_2)$ as before, let $\gamma\colon=[g\colon Y \to X \times C]$ be a cobordism cycle. Since $X$ is smooth, by the transversality \cite[Proposition 3.3.1]{LM}, we may assume that $g$ is transverse to both $i_1$ and $i_2$. The composition $Y \to X \times C \to C$ now has smooth fibres over $t_1, t_2$ so that we have $-(i^*_{1} - i^*_{2}) (\gamma) = \theta \left(\partial_C(g, t_1, t_2)\right)$. So, if $X$ is smooth, then ${\rm Im} (\theta) \supseteq {\rm Im} (\theta')$ holds.

\emph{Case 2:} Suppose $X$ is any scheme. We prove by induction on $\dim X$. Note that every cobordism cycle is a formal sum of cobordism cycles of the form $[f \colon Y \to X]$ where $Y$ is smooth and integral, and such $f$ factors uniquely through an irreducible component of $X_{\rm red}$. Thus, we may reduce to the case when $X$ is integral.

If $\dim X = 0$, then $X$ is smooth so that the statement holds by \emph{Case 1}. Suppose $\dim X >0$, and assume the statement holds for all lower dimensional schemes in $\Sch_k$.

Let $\iota\colon Z \hookrightarrow X$ be the singular locus and let $U\colon= X \backslash Z$ be the open complement. Using Hironaka's resolution of singularities, we can find a projective morphism $\pi\colon \widetilde{X} \to X$ that is an isomorphism over $U$ such that the inverse image of $Z$ is a strict normal crossing divisor. Let $[g\colon Y \to X \times C]\in \omega_* (X \times C)$, and let $t_1, t_2 \in C(k)$ be two distinct points. Consider the diagram:
\[
\xymatrix{ E \ar[d]  \ar@{^{(}->}[r]  & \widetilde{Y} \ar[d] ^\mu \ar[r] ^{\widetilde{g}} & \widetilde{X} \times C  \ar[d] ^{\pi_C} & U \times C \ar@{_{(}->}[l] \ar@{=}[d] \\
W \ar@{=}[rd] \ar@{^{(}->}[r]^j & Y \ar[r] ^g  \ar@{-->}[ru]|-{f} & X \times C & U \times C\ar@{_{(}->}[l] \\
& W \ar@{^{(}->}[u]_j \ar[r]^{g'} & Z \times C \ar@{^{(}->}[u] _{\iota_C}, &}
\]
where $W \colon= g^{-1} (Z\times C)$, $g'$ is the restriction of $g$ on $W$, $f$ is the rational map $\pi_C ^{-1} \circ g$, and $\mu$ is a sequence of blow-ups of the indeterminacy of $f$, which is an isomorphism on the complement of $W$ such that the exceptional divisor $E$ is a strict normal crossing divisor, and such that there is a morphism $\widetilde{g}$ making the diagram commute. Moreover, the upper-right and the lower squares are Cartesian.

Let $\alpha \colon= [ g\colon Y \to X \times C]\in \omega_* (X \times C), \widetilde{\alpha} \colon=[ \widetilde{g}\colon \widetilde{Y} \to \widetilde{X} \times C] \in \omega_* (\widetilde{X}\times C)$, and $\beta\colon= [\mu\colon \widetilde{Y} \to Y]\in \omega_* (Y)$. Recall that for $V\in \Sm_k$, we write $1_V= [ {\rm Id} \colon V \to V] \in \omega_* (V)$. Then as cobordism cycle classes, we have
\begin{equation}
\alpha = g_* (1_Y), \ \ \widetilde{\alpha} = \widetilde{g}_* (1_{\widetilde{Y}}), \ \ {\pi_C}_* (\widetilde{\alpha}) = g_* \mu_* (1_{\widetilde{Y}}) = g_* (\beta).
\end{equation}
Thus, $\alpha -  {\pi_C}_* (\widetilde{\alpha} )= g_* (1_Y - \beta).$ 
The blow-up formula \cite[Proposition 3.2.4]{LM} implies that there is a cobordism cycle $\eta \in \omega_* (W)$ such that $1_Y- \beta = j_* (\eta)$. We thus have
\begin{equation}
\alpha - {\pi_C}_* (\widetilde{\alpha}) = g_* (1_Y-\beta) =  g_* j_* (\eta) ={\iota_C} _* g'_* (\eta).
\end{equation} 
In particular, we have $i_j ^* (\alpha) - i_j ^* ({ \pi_C} _* (\widetilde{\alpha})) = i_j ^* ({\iota_C} _* g'_* (\eta))$ for $j=1,2$ so that 
\begin{equation}\label{eqn:middle summary}
\theta' (\alpha) - \theta' ({\pi_C }_* (\widetilde{\alpha})) = \theta' ({\iota_C} _* g'_* (\eta)).
\end{equation}

On the other hand, in the Cartesian diagrams below whose rows are regular embeddings,
\[
\xymatrix@C.9pc{
\widetilde{X} \times \{t_j\} \ar[r] \ar[d]^{\pi} & \widetilde{X} \times C \ar[d]^{\pi_C} &
Z \times \{t_j\} \ar[r] \ar[d]^{\iota} & Z \times C \ar[d]^{\iota_C} \\
X \times \{t_j\} \ar[r] & X \times C, & X \times \{t_j\} \ar[r] & X \times C,} 
\] 
we can use \cite[Proposition 6.5.4]{LM} 
to deduce that $\theta' ({\pi_C }_* (\widetilde{\alpha})) = \pi_*\left(\theta'(\widetilde{\alpha})\right)$ and $\theta' ({\iota_C} _* g'_* (\eta)) = {\iota}_*\left(\theta'(g'_* (\eta)) \right)$. (N.B. : The Tor-independence assumption in \cite[Proposition 6.5.4]{LM} is only to guarantee that pull-backs of regular embeddings are regular embeddings. In our case, the rows are regular embeddings, thus, the proposition applies here.) 
\\
Applying this to \eqref{eqn:middle summary}, we conclude that
\begin{equation}\label{eqn:Tor-ind1}
\theta' (\alpha) = \pi_*\left(\theta'(\widetilde{\alpha})\right) + {\iota}_*\left(\theta'(g'_* (\eta)) \right).
\end{equation}

By the \emph{Case 1} applied to $\widetilde{X}$, we have that $\theta' (\widetilde{\alpha}) \in  \theta (\mathcal{R}_* ^{\rm alg} (\widetilde{X}))$ so that ${\pi}_* (\theta' (\widetilde{\alpha})) \in {\pi}_*( \theta (\mathcal{R}_* ^{\rm alg}(\widetilde{X})))\subset \theta (\mathcal{R}_* ^{\rm alg}(X))$ by Proposition \ref{prop:Basic-small}. Thus, to show $\theta' (\alpha) \in \theta (\mathcal{R}_* ^{\rm alg} (X))$, it is enough to prove that $\theta' ( g'_* (\eta)) \in \theta (\mathcal{R} _* ^{\rm alg} (Z))$. But this holds by the induction hypothesis since $\dim Z < \dim X$.
Hence, we have shown that ${\rm Im} (\theta) \supseteq {\rm Im} (\theta')$ for $X$. This finishes the proof of the theorem.\hfill $\Box$



\section{Equivalence of $\Omega_* ^{\rm alg}$ and $\omega_* ^{\rm alg}$ and consequences}\label{section:comparison}
The purpose of this section is to prove Theorem \ref{thm:comparison}, and to establish some fundamental properties of our cobordism theory.

\subsection{The comparison theorem}

First, we state an analogue of \cite[Lemma 3.2]{LP} for algebraic equivalence:

\begin{lemma}\label{equivalent divisors Omega} Let $Y \in \Sm_k$ and let $E, F$ be strict normal crossing divisors on $Y$ that are algebraically equivalent. Then $[E \to Y] = [F \to Y]$ in $\Omega_* ^{\rm alg} (Y)$.
\end{lemma}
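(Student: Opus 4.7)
The plan is to reduce the claim to two inputs: the interpretation of $[E\to Y]$ for an SNC divisor as the operator $\tilde{c}_1(O_Y(E))(1_Y)$, and the translation of algebraic equivalence of divisors into algebraic equivalence of the associated line bundles. Once these are in place, the conclusion follows at once from the relation (Equiv) built into $\Omega_*^{\rm alg}$.

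First I would recall from \S\ref{background for Omega adpc} (following \cite[\S 3.1]{LM}) that for any strict normal crossing divisor $E$ on $Y$, with inclusion of support $\iota_E\colon |E|\hookrightarrow Y$, the identity $\iota_{E*}[E\to |E|] = \tilde{c}_1(O_Y(E))(1_Y)$ holds in $\Omega_*(Y)$. Under the natural surjection $\Phi_Y\colon \Omega_*(Y)\twoheadrightarrow\Omega_*^{\rm alg}(Y)$ this identity persists, so that $[E\to Y] = \tilde{c}_1(O_Y(E))(1_Y)$ in $\Omega_*^{\rm alg}(Y)$, and analogously for $F$.

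Next, from the hypothesis $E\sim F$ as Weil divisors on the smooth scheme $Y$, I would invoke Lemma \ref{lem:trivial**} to deduce $O_Y(E)\sim O_Y(F)$ as line bundles on $Y$. Then the relation (Equiv) of Definition \ref{precobordism definition}(3), applied to the single-line-bundle cobordism cycles $[\mathrm{Id}_Y\colon Y\to Y, O_Y(E)]$ and $[\mathrm{Id}_Y\colon Y\to Y, O_Y(F)]$ (with $\phi = \mathrm{Id}_Y$ and trivial permutation), equates them in $\underline{\Omega}_*^{\rm alg}(Y)$, and hence in $\Omega_*^{\rm alg}(Y)$. In operator form this reads $\tilde{c}_1(O_Y(E))(1_Y) = \tilde{c}_1(O_Y(F))(1_Y)$, which, combined with the preceding paragraph, yields $[E\to Y] = [F\to Y]$.

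No serious obstacle arises; the only nontrivial input is Lemma \ref{lem:trivial**}, the bridge between algebraic equivalence of Weil divisors and that of the associated line bundles on a smooth scheme, which has already been established, after which the argument is a direct invocation of (Equiv).
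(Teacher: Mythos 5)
Your proposal is correct and follows essentially the same route as the paper's own proof: identify $[E\to Y]$ with $[Y\to Y, O_Y(E)]=\tilde{c}_1(O_Y(E))(1_Y)$ (the paper cites \cite[Proposition 3.1.9]{LM}, you cite the recollection in \S\ref{background for Omega adpc}, which is the same fact), then use Lemma \ref{lem:trivial**} to pass from $E\sim F$ to $O_Y(E)\sim O_Y(F)$ and conclude by the relation (Equiv). No gaps.
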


\begin{proof}By \cite[Proposition 3.1.9]{LM}, we have $[E\to Y] = [Y \to Y, O_Y (E)]$ and $[F \to Y] = [Y\to Y, O_Y (F)]$ in $\Omega_* (Y)$. Via the natural map $\Omega_* (Y) \to \Omega_* ^{\rm alg} (Y)$, these equalities still hold in $\Omega_* ^{\rm alg} (Y)$. It follows from the relation (Equiv) of Definition \ref{precobordism definition} and Lemma \ref{lem:trivial**} that $[Y \to Y, O_Y (E)]= [Y \to Y, O_Y (F)]$ in $\Omega_* ^{\rm alg} (Y)$. Hence $[E \to Y] = [F\to Y]$ in $\Omega_* ^{\rm alg} (Y)$.
\end{proof}

\begin{theorem}\label{thm:comparison} For $X \in \Sch_k$, there is a canonical isomorphism $\Omega_{*} ^{\rm alg} (X) \simeq \omega_* ^{\rm alg} (X)$. 
\end{theorem}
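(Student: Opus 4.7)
The plan is to identify the kernels of the two surjections $\Phi_X \colon \Omega_*(X) \twoheadrightarrow \Omega_*^{\rm alg}(X)$ and $\Psi_X \colon \omega_*(X) \twoheadrightarrow \omega_*^{\rm alg}(X)$ under the Levine--Pandharipande isomorphism $\omega_*(X) \xrightarrow{\sim} \Omega_*(X)$. Proposition \ref{prop:Alg-st} describes $\ker(\Phi_X)$ as being generated by classes $[f\colon Y \to X, L] - [f\colon Y \to X, M]$ with $L \sim M$, while Theorem \ref{thm:FES} identifies $\ker(\Psi_X)$ with the image of $\bigoplus_{(C,t_1,t_2)}(i_{t_1}^* - i_{t_2}^*)$. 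The theorem will then follow from establishing both inclusions between these two subgroups of $\Omega_*(X) \cong \omega_*(X)$.

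The inclusion $\ker(\Psi_X) \subseteq \ker(\Phi_X)$ is the easier direction. For a generator $\alpha = [g\colon Y \to X \times C]$ of $\omega_*(X \times C)$, the l.c.i.\ pullback formula of \cite[\S 6.5]{LM} expresses $i_{t_j}^*\alpha$ as the pushforward to $X$ of the divisor class $[g^{-1}(X \times t_j) \to Y]$, which by the (Sect) axiom equals $[Y \to X, \pi^* O_C(t_j)]$, where $\pi = pr_2 \circ g \colon Y \to C$. Since any two distinct rational points on a smooth projective connected curve $C$ are algebraically equivalent as divisors (take the diagonal in $C \times C$), Lemma \ref{lem:trivial**} gives $O_C(t_1) \sim O_C(t_2)$, hence $\pi^* O_C(t_1) \sim \pi^* O_C(t_2)$. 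The (Equiv) relation then forces $i_{t_1}^*\alpha - i_{t_2}^*\alpha \in \ker(\Phi_X)$; the case when $g^{-1}(X \times t_j)$ is only a strict normal crossing divisor rather than smooth is handled by Lemma \ref{equivalent divisors Omega}.

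The harder inclusion $\ker(\Phi_X) \subseteq \ker(\Psi_X)$ reduces via the projection formula to the following \emph{key lemma}: for any $Y \in \Sm_k$ and algebraically equivalent line bundles $L \sim M$ on $Y$, the class $[Y,L] - [Y,M]$ lies in $\ker(\Psi_Y)$. I would prove this by induction on $\dim Y$, with the base case $\dim Y = 0$ immediate from the (Dim) axiom. For the inductive step, let $\mathcal{L}$ on $Y \times C$ interpolate $L$ and $M$. Twist $\mathcal{L}$ by a sufficiently ample $pr_1^* N$ so that $\mathcal{L}' = \mathcal{L} \otimes pr_1^* N$ becomes very ample on $Y \times C$, then apply Bertini to produce a section of $\mathcal{L}'$ whose zero locus $Z \subset Y \times C$ is smooth with smooth fibers $Z_{t_1}, Z_{t_2}$. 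The (Sect) axiom then gives $[Y, L \otimes N] - [Y, M \otimes N] = [Z_{t_1} \to Y] - [Z_{t_2} \to Y] = (i_{t_1}^* - i_{t_2}^*)[Z \to Y \times C] \in \ker(\Psi_Y)$, handling the very ample case. To return to $(L, M)$, use the formal group law $\tilde{c}_1(L) = F(\tilde{c}_1(L \otimes N), \chi(\tilde{c}_1(N)))$ (with $\chi$ the formal inverse) and the factorization $F(u,w) - F(v,w) = (u-v) H(u,v,w)$ to write $[Y,L] - [Y,M] = (\tilde{c}_1(L \otimes N) - \tilde{c}_1(M \otimes N))(\gamma)$ for some $\gamma = 1_Y + \gamma'$, where $\gamma'$ is a sum of cobordism cycles $[g_i \colon W_i \to Y]$ with $\dim W_i < \dim Y$ (by the (Dim) axiom applied to the expansion). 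The summand applied to $1_Y$ was just handled by Bertini, while the summand applied to each $[g_i \colon W_i \to Y]$ equals $g_{i*}([W_i, g_i^*(L \otimes N)] - [W_i, g_i^*(M \otimes N)])$ by the projection formula, which lies in $g_{i*}(\ker(\Psi_{W_i})) \subseteq \ker(\Psi_Y)$ by the induction hypothesis (since $g_i^*(L \otimes N) \sim g_i^*(M \otimes N)$ on $W_i$).

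The main obstacle is this key lemma, and within it the delicate interplay between the Bertini construction and the formal group law: the twist $N$ must be large enough to make $\mathcal{L}'$ very ample so Bertini applies, yet the FGL correction $\chi(\tilde{c}_1(N))$ must produce only cycles on strictly lower-dimensional varieties so that the inductive hypothesis can be invoked. Once both inclusions are established, the two kernels coincide and the induced isomorphism $\Omega_*^{\rm alg}(X) \simeq \omega_*^{\rm alg}(X)$ is canonical.
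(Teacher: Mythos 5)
Your overall frame---identifying $\ker(\Phi_X)$ via Proposition \ref{prop:Alg-st} with $\ker(\Psi_X)$ via Theorem \ref{thm:FES} under the Levine--Pandharipande isomorphism---is exactly the paper's frame, and your easier inclusion is essentially the paper's computation (divisor classes of the fibers, $O_C(t_1)\sim O_C(t_2)$, Lemma \ref{equivalent divisors Omega}); note only that for general, possibly singular $X$ and a non-transverse $g$ the identity ``$i_{t_j}^*[g]$ equals the push-forward of the preimage divisor class'' is not automatic: you must either pass to representatives in good position via the pseudo-divisor machinery of \cite[\S 6]{LM}, or, as the paper does, work directly with the double-point relations that generate $\ker(\Psi_X)$. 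Where you genuinely diverge is the harder inclusion. The paper disposes of it in a few lines: writing $[Y\to Y,L_j]=\tilde{c}_1(i_j^*\mathcal{L})(i_j^*(1_{Y\times C}))=i_j^*(\tilde{c}_1(\mathcal{L})(1_{Y\times C}))$ by the commutation of l.c.i.\ pull-backs with Chern class operators \cite[Lemma 7.4.1(2)]{LM}, so that $[Y,L]-[Y,M]=(i_1^*-i_2^*)([{\rm Id}\colon Y\times C\to Y\times C,\mathcal{L}])$, with no Bertini, no ampleness and no induction. Your Bertini-plus-formal-group-law induction is a hands-on substitute for that single lemma; it can be made to work, but it is considerably longer and is where your write-up has genuine gaps.

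Two concrete problems in your key lemma. First, twisting by $pr_1^*N$ alone cannot make $\mathcal{L}'$ very ample: positivity along the fibers $\{y\}\times C$ is unchanged, and $\mathcal{L}$ may well have negative degree there. You must twist by $pr_1^*N\otimes pr_2^*N'$ with $N'$ ample on $C$ (harmless, since $pr_2^*N'$ restricts trivially to $Y\times\{t_j\}$), and since $Y$ is only quasi-projective you also owe an argument that a suitable globally generated twist with a Bertini-general section, smooth and transverse to both fibers, exists (e.g.\ extend $\mathcal{L}$ coherently to a projective closure). Second, and more seriously, the claim that $\gamma'$ is a sum of cobordism cycles $[W_i\to Y]$ with $\dim W_i<\dim Y$ is false as stated: every non-constant term of $H$ carries a coefficient in $\mathbb{L}_{\geq 1}$ (an $a_{i,j}$ with $i,j\geq 1$, or a coefficient of $\chi$), and multiplication by such classes raises homological degree, so $\gamma'$ has components in degree $\dim Y$; expressing those integrally in Levine--Pandharipande generators forces $\dim W_i=\dim Y$, making your induction circular. (The relation (Dim) only gives finiteness of the expansion, not this dimension drop.) The repair is to keep the $\mathbb{L}_*$-coefficients outside: write $\gamma'$ as an $\mathbb{L}_*$-linear combination of classes $\tilde{c}_1(A_1)\cdots\tilde{c}_1(A_s)(1_Y)$ with $s\geq 1$ and $A_i\in\{L\otimes N, M\otimes N, N\}$, observe that these lie in degrees $<\dim Y$ and hence are integral combinations of $[W\to Y]$ with $\dim W<\dim Y$, and use that the Chern class operators and $\ker(\Psi_Y)$ are stable under external product with classes from $\mathbb{L}_*\simeq\omega_*(k)$ before applying the induction hypothesis. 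With these repairs your argument goes through, though it amounts to reproving by hand the special case of \cite[Lemma 7.4.1]{LM} that the paper simply quotes.
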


\begin{proof}
We first define a map $\vartheta^{\rm alg}_X\colon \omega^{\rm alg}_*(X) \to \Omega^{\rm alg}_*(X)$. We let $\vartheta^{\rm alg}_X \colon \mathcal{M} _* (X)^+ \to \Omega_* ^{\rm alg} (X)$ be given by $\vartheta^{\rm alg}_X ([f\colon Y \to X]_{\omega^{\rm alg}}) \colon= [f \colon Y \to X]_{\Omega^{\rm alg}}$. We need to show that $\vartheta^{\rm alg}_X$ kills the algebraic double-point relations.

So let $(g, p, \zeta)$ be an algebraic double-point cobordism given by a projective $g\colon Y \to X \times C$, where $C$ is a smooth projective curve. It is enough to show that $\partial_C(g, p, \zeta)$ vanishes in $\Omega_* ^{\rm alg} (X)$. Let $f\colon= pr_1 \circ g$ and $\pi \colon= pr_2 \circ g$. Since 
\[
\partial_C(g, p, \zeta) =f_*\left([Y_\zeta \to Y] - [A \to Y] - [B \to Y] + [ \mathbb{P}(\pi) \to Y]\right),
\]
it suffices to show that the relation
\begin{equation}\label{eqn:comp2}
[Y_\zeta \to Y] - [A \to Y] - [B \to Y] + [\mathbb{P}(\pi) \to Y] = 0
\end{equation}
holds in $\Omega_* ^{\rm alg} (Y)$.

We apply the equation \eqref{adpc equation} to the divisor $E\colon= A + B$ on $Y$ to obtain 
\begin{equation}\label{adpc Omega eqn1}
[E \to Y] - [A \to Y] - [B \to Y] + [\mathbb{P}(\pi) \to Y] = 0 \in \Omega_* ^{\rm alg} (Y).
\end{equation}
On the other hand, the divisor $E$ is algebraically equivalent to the divisor $Y_{\zeta}$ and hence by Lemma \ref{equivalent divisors Omega}, we also have the equality $[E \to Y] = [Y_{\zeta} \to Y] \in \Omega_* ^{\rm alg} (Y)$. Combining this with \eqref{adpc Omega eqn1}, we obtain \eqref{eqn:comp2}. Thus, the map $\vartheta_X ^{\rm alg} \colon \mathcal{M}_* (X)^+\to \Omega_* ^{\rm alg}(X)$ descends to give $\vartheta^{\rm alg}_X \colon \omega^{\rm alg}_*(X) \to \Omega^{\rm alg}_*(X)$.

To define the inverse $\tau^{\rm alg}_X\colon \Omega_{*} ^{\rm alg} (X) \to \omega_* ^{\rm alg} (X)$ of $\vartheta^{\rm alg}_X$, we consider the diagram 
\begin{equation}\label{eqn:comp1}
\xymatrix{
& \Omega_*(X) \ar[r]^{\Phi_X} \ar[d]_{\tau_X} & 
\Omega_{*} ^{\rm alg}(X) \ar@{-->}[d] \ar[r] & 0 \\
{\underset{(C,t_1, t_2)}\bigoplus} 
\omega_*(X \times C) \ar[r]_>>>>{i^*_1 - i^*_2} & \omega_*(X) \ar[r]_{\Psi_X} &
\omega^{\rm alg}_*(X) \ar[r] & 0,}
\end{equation}
where the bottom row is exact by Theorem \ref{thm:FES} and the isomorphism $\tau_X$ is from \cite[\S 11.1]{LP}. 

Let $\theta'$ be the sum of the maps $i_1 ^* - i_2 ^*$. We need to show that $\tau_X (\ker (\Phi_X)) \subseteq {\rm Im} (\theta')$ in order to define $\tau_X ^{\rm alg}$. By Proposition \ref{prop:Alg-st}, $\ker (\Phi_X)$ is generated by cobordism cycles $\alpha$ of the form $[f\colon Y \to X, L_1]-[f\colon Y \to X, L_2]$ such that $L_1 \sim L_2$. So, it is enough to show that $\tau_X (\alpha) \in {\rm Im} (\theta')$ for such $\alpha$. We can write $\alpha = f_* \left(\tilde{c}_1(L_1)(1_Y)- \tilde{c}_1(L_2)(1_Y)\right)$. Applying \cite[Theorem 6.5.12]{LM} (the Tor-independence assumptions hold by Lemma \ref{lem:Tor-ind} below) to the 
Cartesian square
$$\begin{CD}
Y \times \{ t_j \} @>{i_j}>> Y \times C \\
@V{f}VV @V{f_C}VV\\
X \times \{ t_j \} @>{i_j}>> X \times C,
\end{CD}$$ we deduce that $\theta'$ respects projective push-forwards. Since $\tau_X$ also respects projective push-forwards, we may replace $X$ by $Y$, $f$ by ${\rm Id}_X$, and $\alpha$ by $\tilde{c}_1(L_1)(1_Y)- \tilde{c}_1(L_2)(1_Y)$.


Since $L_1 \sim L_2$, there exists a smooth projective curve $C$, two distinct points $t_1, t_2 \in C(k)$ and a line bundle $\mathcal{L}$ on $X \times C$ such that $\mathcal{L}|_{X \times \{t_j\}}  \simeq L_i$ for $j = 1, 2$. We can then write 
\[
[{\rm Id}_X\colon X \to X, L_j]  = \tilde{c}_1(L_j)(1_X)  = ( \tilde{c}_1\left(i^*_j(\mathcal{L})\right) \circ  i^*_j )\left(1 _{X \times C}\right) = ( i^*_j \circ \tilde{c}_1(\mathcal{L}))\left(1_{X \times C}\right)
\]
where the last equality follows from \cite[Lemma 7.4.1 (2)]{LM}.

In particular, we see that $\alpha = i^*_1(\widetilde{\alpha}) - i^*_2(\widetilde{\alpha})$, where $\widetilde{\alpha} = [{\rm Id}\colon X \times C \to X \times C, \mathcal{L}]$. That is, $\tau_X(\alpha) = \theta'(\widetilde{\alpha})$. This shows that $\tau_X(\ker (\Phi_X))
\subseteq {\rm Im}(\theta')$ and it defines $\tau^{\rm alg}_X$ such that the above diagram commutes. 

Both $\omega_* ^{\rm alg} (X)$ and $\Omega_* ^{\rm alg} (X)$ are generated by cobordism cycles of the form $[f\colon Y \to X]$, and for those cycles, $\tau^{\rm alg}$ and $\vartheta^{\rm alg}_X$ are inverses of each other. This proves the theorem. 
\end{proof}

We used the following basic lemma in the proof of the above. We shall use it again to prove Theorem \ref{thm:localization**}.

\begin{lemma}\label{lem:Tor-ind}
Let $T$ be a smooth scheme and let $W \subset T$ be a smooth closed subscheme. Then for any morphism $f \colon V' \to V$ in $\Sch_k$, the schemes $V' \times T$ and $V \times W$ are Tor-independent over $V \times T$.

In particular, if $C$ is a smooth curve and $\{t\}$ is a point in $C(k)$, then for any morphism $f \colon Y \to X$ in $\Sch_k$, the schemes $Y \times C$ and $X \times \{ t \}$ are Tor-independent over $X \times C$.
\end{lemma}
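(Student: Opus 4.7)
The plan is to reduce the Tor-vanishing to a Koszul complex computation built from the fact that $W \hookrightarrow T$ is a regular closed embedding. Since $W$ and $T$ are both smooth over $k$, this embedding is regular of codimension $r = \dim T - \dim W$; working Zariski-locally on $T$, its ideal is generated by a regular sequence $x_1, \dots, x_r \in \sO_T$. Let $\pi_T \colon V \times T \to T$ denote the projection, and set $\tilde{x}_i \colon= \pi_T^{\,*}(x_i)$. Since $V \to \Spec k$ is flat (every $k$-scheme is), so is $\pi_T$, and regular sequences are preserved under flat base change. Hence $\tilde{x}_1, \dots, \tilde{x}_r$ form a regular sequence in $\sO_{V \times T}$ cutting out $V \times W$, and the Koszul complex $K_\bullet \colon= K_\bullet(\tilde{x}_1, \dots, \tilde{x}_r;\, \sO_{V \times T})$ is a locally free resolution of $\sO_{V \times W}$.

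Using this resolution, for each $n > 0$ I compute
\[
\Tor_n^{\sO_{V \times T}}(\sO_{V' \times T}, \sO_{V \times W}) \;\cong\; H_n\bigl(K_\bullet \otimes_{\sO_{V \times T}} \sO_{V' \times T}\bigr).
\]
The right-hand side is itself the Koszul complex on $\sO_{V' \times T}$ built from the images of the $\tilde{x}_i$ under the base-change map $\sO_{V \times T} \to \sO_{V' \times T}$ induced by $f \times \id_T$. Under the obvious commutative diagram relating the projections $V' \times T \to T$ and $V \times T \to T$, these images coincide with the pull-backs of the $x_i$ to $\sO_{V' \times T}$. Applying the same flatness argument, now to $V' \to \Spec k$, these pulled-back sections form a regular sequence in $\sO_{V' \times T}$ (cutting out $V' \times W$), so the Koszul complex is acyclic in positive degrees and the $\Tor$ vanishes.

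The only real point to verify is the preservation of regular sequences under the projections $V \times T \to T$ and $V' \times T \to T$, which is an immediate consequence of the flatness of $V, V'$ over $\Spec k$ together with the standard preservation of regular sequences under flat base change; I do not foresee any serious obstacle beyond this. The second, ``in particular,'' assertion then follows by specializing to $T = C$ and $W = \{t\}$, since a $k$-rational point on a smooth curve is a smooth closed subscheme of codimension one.
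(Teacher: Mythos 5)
Your proposal is correct and is essentially the paper's own argument: the paper likewise works locally so that $W \subset T$ is a complete intersection, takes the resulting finite free (Koszul) resolution of $\mathcal{O}_W$ over $\mathcal{O}_T$, base-changes it along the flat projection to obtain a free resolution of $\mathcal{O}_{V \times W}$ over $\mathcal{O}_{V \times T}$, and observes that tensoring with $\mathcal{O}_{V' \times T}$ produces the corresponding complex over $V'$, which is still exact because everything is flat over the field $k$. Your explicit phrasing in terms of regular sequences being preserved under the flat projections $V \times T \to T$ and $V' \times T \to T$ is just a mild reformulation of that same step.
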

\begin{proof}
Since the first assertion is local on $V$ and $T$, by shrinking them to small enough affine open subschemes if necessary, we may assume that both are affine such that $W \subseteq V$ is a complete intersection subscheme. In particular, there is a finite resolution $\mathcal{F}_{\bullet} \to \mathcal{O}_W$ by free $\mathcal{O}_T$-modules of finite rank. This in turn shows that $\mathcal{F}_{\bullet} \otimes_k \mathcal{O}_V \to \mathcal{O}_{V\times W}$ is a finite free resolution of $\mathcal{O}_{V\times W}$ as $\mathcal{O}_{V \times T}$-module.

Since $\left(\mathcal{F}_{\bullet} \otimes_k \mathcal{O}_V\right) \otimes_{\mathcal{O}_{V \times T}} \mathcal{O}_{V' \times T} \simeq \mathcal{F}_{\bullet} \otimes_k \mathcal{O}_{V'}$,  we have 
\[
\Tor^{\mathcal{O}_{V \times T}}_i \left(\mathcal{O}_{V \times W}, \mathcal{O}_{V' \times T}\right) = \mathcal{H}_i\left(\mathcal{F}_{\bullet} \otimes_k \mathcal{O}_{V'}\right) = 0
\]
for $i > 0$. The second assertion is a special case of the first. \end{proof}

As an immediate consequence of Theorems \ref{thm:FES} and \ref{thm:comparison},
we obtain:

\begin{theorem}\label{thm:FES-ST}For $X \in \Sch_k$, there is an exact sequence:
\[
{\underset{(C,t_1, t_2)}\bigoplus} \Omega_*(X \times C) \xrightarrow{i^*_1 - i^*_2} \Omega_*(X) \xrightarrow{\Phi_X} \Omega^{\rm alg}_*(X) \to 0,
\]
where $(C, t_1, t_2)$ runs over the equivalence classes of triples consisting of a smooth projective connected curve $C$ and two distinct points $t_1, t_2 \in C(k)$, and $i_j ^*$ is the pull-back via the closed immersion $i_j: X \times \{ t_j \} \to X \times C$ for $j=1, 2.$
\end{theorem}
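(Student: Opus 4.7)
The plan is to deduce this directly from Theorem \ref{thm:FES} and Theorem \ref{thm:comparison} by transporting the exact sequence across the canonical isomorphisms. First, recall that by \cite[Theorem 1]{LP} the natural map $\omega_*(-) \to \Omega_*(-)$ is an isomorphism on $\Sch_k$, so applying this to $X$ and to each $X \times C$ gives canonical identifications $\omega_*(X) \simeq \Omega_*(X)$ and $\omega_*(X \times C) \simeq \Omega_*(X \times C)$. Moreover, Theorem \ref{thm:comparison} provides $\omega_*^{\rm alg}(X) \simeq \Omega_*^{\rm alg}(X)$, compatible with the canonical surjections from $\omega_*(X)$ and $\Omega_*(X)$ respectively (this compatibility was in fact established in diagram \eqref{eqn:comp1}, where $\Phi_X$ and $\Psi_X$ sit side by side via the isomorphism $\tau_X$).

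The main step I would carry out is to verify that the horizontal maps $i_1^* - i_2^*$ in Theorem \ref{thm:FES} (defined as the l.c.i.\ pull-backs on $\omega_*$ via \cite[\S 6.5]{LM}) coincide, under the Levine--Pandharipande identification $\omega_* \simeq \Omega_*$, with the Gysin pull-backs $i_1^* - i_2^*$ on $\Omega_*$. This is a compatibility statement about l.c.i.\ pull-backs between two models of algebraic cobordism; it follows from the fact that the isomorphism $\omega_* \xrightarrow{\simeq} \Omega_*$ is a morphism of oriented Borel--Moore homology theories on $\Sch_k$, so it commutes with all l.c.i.\ pull-backs, in particular with the pull-backs along the regular closed embeddings $i_j \colon X \times \{t_j\} \hookrightarrow X \times C$.

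Once this compatibility is in hand, the claim is a direct translation: Theorem \ref{thm:FES} gives the exact sequence
\[
{\underset{(C,t_1, t_2)}\bigoplus} \omega_*(X \times C) \xrightarrow{i^*_1 - i^*_2} \omega_*(X) \xrightarrow{\Psi_X} \omega^{\rm alg}_*(X) \to 0,
\]
and replacing each term by its $\Omega$-counterpart through the vertical isomorphisms yields the exact sequence asserted in Theorem \ref{thm:FES-ST}, with $\Phi_X$ in place of $\Psi_X$ by the commutativity established in the proof of Theorem \ref{thm:comparison}.

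The only genuine obstacle is the identification of the two Gysin pull-backs mentioned above; if one wishes to avoid invoking the fact that $\omega_* \simeq \Omega_*$ is a morphism of oriented Borel--Moore theories, one can instead argue cycle-theoretically by noting that for a cobordism cycle $[g \colon Y \to X \times C]$, both pull-backs $i_j^*$ are computed by the same recipe: deform $g$ to be transverse to $i_j$ (possible because $X \times \{t_j\}$ is regularly embedded in $X \times C$ and we may apply moving along with \cite[Lemma 6.5.6]{LM}) and take $[Y_{t_j} \to X]$; the Tor-independence hypothesis required in \cite[Proposition 6.5.4]{LM} is automatic here by Lemma \ref{lem:Tor-ind}. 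Everything else is formal.
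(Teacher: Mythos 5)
Your argument is correct and is essentially the paper's own proof: Theorem \ref{thm:FES-ST} is deduced by transporting the exact sequence of Theorem \ref{thm:FES} through the identification $\omega_* \simeq \Omega_*$ of \cite[Theorem 1]{LP} and the isomorphism $\omega_*^{\rm alg} \simeq \Omega_*^{\rm alg}$ of Theorem \ref{thm:comparison}, with the compatibility of $\Phi_X$ and $\Psi_X$ coming from the commutative diagram \eqref{eqn:comp1}. The pull-back compatibility you single out as the main step is in fact definitional here, since in \S\ref{section:Main-sequence} the maps $i_j^*$ on $\omega_*$ are introduced precisely as the l.c.i.\ pull-backs on $\Omega_*$ of \cite[\S 6.5]{LM} transported through the Levine--Pandharipande identification, so no further verification is needed.
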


\subsection{Fundamental properties of $\Omega^{\rm alg}_*$}
\label{subsection :Fundamental-STC}
We now prove some important properties of our cobordism theory. 

\begin{theorem}[Localization sequence]\label{localization for sim-cobordism}\label{thm:localization**}
Given a closed immersion $Y \inj X$ in $\Sch_k$ with complement $U$, there
is an exact sequence
\[
\Omega_* ^{\rm alg} (Y)\to  \Omega_* ^{\rm alg} (X) \to \Omega_* ^{\rm alg} (U) \to 0.
\]
\end{theorem}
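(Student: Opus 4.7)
The plan is to deduce the localization sequence for $\Omega_*^{\rm alg}$ from the analogous Levine--Morel localization sequence for $\Omega_*$ (see \cite[Theorem 3.2.7]{LM}) together with the basic exact sequence of Theorem~\ref{thm:FES-ST}. Surjectivity of $\Omega_*^{\rm alg}(X) \to \Omega_*^{\rm alg}(U)$ is immediate from the commutative square
\[
\begin{CD}
\Omega_*(X) @>{j^*}>> \Omega_*(U) @>>> 0 \\
@V{\Phi_X}VV @V{\Phi_U}VV \\
\Omega_*^{\rm alg}(X) @>{j^*}>> \Omega_*^{\rm alg}(U) @>>> 0,
\end{CD}
\]
where the top row is exact by Levine--Morel and the vertical arrows are surjective by Lemma~\ref{lem:surj}.

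For exactness in the middle, let $\alpha \in \Omega_*^{\rm alg}(X)$ with $j^*(\alpha) = 0$ in $\Omega_*^{\rm alg}(U)$. Lift $\alpha$ to $\widetilde{\alpha} \in \Omega_*(X)$ via $\Phi_X$, and set $\beta \colon= j^*(\widetilde{\alpha}) \in \Omega_*(U)$, so that $\Phi_U(\beta) = 0$. Applying Theorem~\ref{thm:FES-ST} to $U$, we may write $\beta = \sum_k (i_{1,k}^{U,*} - i_{2,k}^{U,*})(\gamma_k)$ for some $\gamma_k \in \Omega_*(U \times C_k)$ indexed by triples $(C_k, t_{1,k}, t_{2,k})$. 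Next, apply the Levine--Morel localization sequence
\[
\Omega_*(Y \times C_k) \to \Omega_*(X \times C_k) \xrightarrow{(j \times {\rm id})^*} \Omega_*(U \times C_k) \to 0
\]
to lift each $\gamma_k$ to an element $\widetilde{\gamma}_k \in \Omega_*(X \times C_k)$, and set $\widetilde{\beta} \colon= \sum_k (i_{1,k}^{X,*} - i_{2,k}^{X,*})(\widetilde{\gamma}_k) \in \Omega_*(X)$.

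By Theorem~\ref{thm:FES-ST} applied to $X$, we have $\Phi_X(\widetilde{\beta}) = 0$ in $\Omega_*^{\rm alg}(X)$. On the other hand, using the compatibility of the l.c.i.\ closed immersion pull-back $i_{j,k}^*$ with the smooth open immersion pull-back $j^*$ (which holds because the relevant Cartesian squares are Tor-independent by Lemma~\ref{lem:Tor-ind}, so \cite[Proposition 6.5.4]{LM} applies), we obtain
\[
j^*(\widetilde{\beta}) = \sum_k (i_{1,k}^{U,*} - i_{2,k}^{U,*})\bigl((j \times {\rm id})^*(\widetilde{\gamma}_k)\bigr) = \sum_k (i_{1,k}^{U,*} - i_{2,k}^{U,*})(\gamma_k) = \beta.
\]
Hence $j^*(\widetilde{\alpha} - \widetilde{\beta}) = 0$ in $\Omega_*(U)$, and the Levine--Morel localization sequence for $\Omega_*(X)$ provides $\delta \in \Omega_*(Y)$ with $\widetilde{\alpha} - \widetilde{\beta} = i_*(\delta)$. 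Applying $\Phi_X$ yields $\alpha = \Phi_X(\widetilde{\alpha}) = \Phi_X(\widetilde{\beta}) + i_*(\Phi_Y(\delta)) = i_*(\Phi_Y(\delta))$, which lies in the image of $\Omega_*^{\rm alg}(Y) \to \Omega_*^{\rm alg}(X)$.

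The main subtlety is verifying that the composite l.c.i.\ pull-back $i_{j,k}^*$ and the open immersion pull-back $j^*$ commute on $\Omega_*$; once this base-change identity is in hand, the rest is a straightforward diagram chase combining the Levine--Morel localization with the basic exact sequence. The Tor-independence required to invoke \cite[Proposition 6.5.4]{LM} is exactly the statement of Lemma~\ref{lem:Tor-ind} (with $T = C_k$, $W = \{t_{j,k}\}$, and $V' \to V$ replaced by the open immersion $U \to X$), so no new commutative algebra input is needed.
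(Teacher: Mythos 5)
Your proof is correct and is essentially the paper's argument: the paper assembles the basic exact sequences of Theorem \ref{thm:FES-ST} for $Y$, $X$ and $U$ together with the Levine--Morel localization sequences into a commutative diagram and performs exactly the chase you have written out element-wise (lift $\alpha$, correct the lift by an element of the image of $\sum_k (i_{1,k}^* - i_{2,k}^*)$, and conclude via $\Omega_*(Y)$ and naturality of $\Phi$). The one point to repair is your justification of $j^*\circ i_{j,k}^{X,*} = i_{j,k}^{U,*}\circ (j\times{\rm id})^*$: this is not an instance of \cite[Proposition 6.5.4]{LM}, which is a base-change statement for a Gysin pull-back against a \emph{projective push-forward} (and the open immersion $j$ is not projective), but simply the composition law (functoriality) of l.c.i.\ pull-backs applied to the two equal composites $U\times\{t_{j,k}\}\to X\times C_k$ -- which is how the paper justifies the corresponding square -- so no Tor-independence is needed there; Lemma \ref{lem:Tor-ind} and \cite[Theorem 6.5.12]{LM} enter the paper's proof only for the square involving the push-forward along $Y\times C\to X\times C$, a square your chase never uses.
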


\begin{proof}
Let $\iota\colon Y \to X$ and $j \colon U \to X$ be the inclusions. 
Consider the diagram 
\begin{equation}\label{eqn:localization square}
\xymatrix{
{\underset{(C,t_1, t_2)}\bigoplus} 
\Omega_*(Y \times C) \ar[r]^>>>>{i^*_1 - i^*_2} \ar[d]^{{\iota_C}_*} & \Omega_*(Y) 
\ar[r]^{\Phi_Y} \ar[d]^{\iota_*} & \Omega^{\rm alg}_*(Y) \ar[d]^{\iota_*} \ar[r] &
0 \\
{\underset{(C,t_1, t_2)}\bigoplus} 
\Omega_*(X \times C) \ar[r]^>>>>{i^*_1 - i^*_2} \ar[d]^{j_C^*} & \Omega_*(X) 
\ar[r]^{\Phi_X} \ar[d]^{j^*} & \Omega^{\rm alg}_*(X) \ar[d]^{j^*} \ar[r] &
0 \\
{\underset{(C,t_1, t_2)}\bigoplus} 
\Omega_*(U \times C) \ar[r]^>>>>{i^*_1 - i^*_2} \ar[d] & \Omega_*(U) 
\ar[r]^{\Phi_U} \ar[d] & \Omega^{\rm alg}_*(U) \ar[d] \ar[r] &
0 \\
0 & 0 & 0, & }
\end{equation}where $\iota_C\colon Y \times C \to X \times C$ and $j_C: U \times C \to X \times C$ are the induced inclusions. Here, the rows are exact by Theorem \ref{thm:FES-ST} and the first two columns are exact by \cite[Theorem 3.2.7]{LM}. This diagram commutes: the bottom left square commutes by the composition law of the pull-backs. The top and the bottom right squares commute by the naturality of the maps $\Phi_{(-)}$. For the top left square, consider the Cartesian square:
$$\begin{CD}
Y \times \{ t_j \} @>{1_Y \times i_j}>> Y \times C \\
@V{\iota}VV @V{\iota_C}VV\\
X \times \{ t_j \} @>{1_X \times i_j}>> X \times C,
\end{CD}$$ where the horizontal maps are l.c.i. morphisms and the vertical maps are closed immersions. By Lemma \ref{lem:Tor-ind}, $1_X \times i_j$ and $\iota_C$ are Tor-independent. Hence by \cite[Theorem 6.5.12]{LM}, we have 
$(1_X \times i_j )^*  \circ { \iota_C}_*= \iota_*  \circ (1_Y \times i_j )^*$, which implies that $\left( (1_X \times i_1 )^* - (1_X \times i_2)^* \right) \circ {\iota_C}_* = \iota_* \circ \left( (1_Y \times i_1 )^*- (1_Y \times i_2 )^* \right)$. This means that the top left square of \eqref{eqn:localization square} commutes. Thus, we have shown that the diagram \eqref{eqn:localization square} commutes. A simple diagram chase now shows that the third column is also exact. This proves the theorem.
\end{proof}

\begin{theorem}[$\mathbb{A}^1$-homotopy Invariance]\label{extended homotopy sim}
\label{thm:H-Invariance}
Let $X\in \Sch_k$ and let $p\colon V \to X$ be a torsor under a vector bundle over $X$ of rank $n$. Then the map $p^* \colon \Omega_* ^{\rm alg} (X) \to \Omega_{*+n} ^{\rm alg} (V)$ is an isomorphism. 
\end{theorem}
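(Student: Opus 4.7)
The plan is to deduce $\mathbb{A}^1$-homotopy invariance for $\Omega_*^{\rm alg}$ from the known $\mathbb{A}^1$-homotopy invariance of Levine-Morel's $\Omega_*$ by comparing the basic exact sequence of Theorem~\ref{thm:FES-ST} applied to $X$ and to $V$.

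First, I would observe that if $p\colon V \to X$ is a torsor under a rank-$n$ vector bundle $E$ on $X$, then for any smooth projective connected curve $C$ the base change $p_C \colon V \times C \to X \times C$ is again a torsor under a rank-$n$ vector bundle, namely the pull-back of $E$ along the projection $X \times C \to X$. Hence by the $\mathbb{A}^1$-homotopy invariance of Levine-Morel's algebraic cobordism (\cite[\S 3.6]{LM}), both of the smooth pull-backs
\[
p^*\colon \Omega_*(X) \to \Omega_{*+n}(V), \qquad p_C^*\colon \Omega_*(X\times C) \to \Omega_{*+n}(V\times C)
\]
are isomorphisms for every triple $(C, t_1, t_2)$.

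Next, I would assemble the two-row commutative diagram obtained by applying Theorem~\ref{thm:FES-ST} to $X$ (top row) and to $V$ (bottom row), with vertical arrows $\bigoplus p_C^*$, $p^*$ and the induced map $p^* \colon \Omega_*^{\rm alg}(X) \to \Omega_{*+n}^{\rm alg}(V)$. Both rows are exact and the first two vertical maps are isomorphisms by the previous paragraph. The right-hand square commutes by the naturality of $\Phi$. The left-hand square amounts to the equality $(i_j^V)^* \circ p_C^* = p^* \circ (i_j^X)^*$ for $j=1,2$, where $i_j^V \colon V \times \{t_j\} \hookrightarrow V \times C$ and $i_j^X \colon X \times \{t_j\} \hookrightarrow X \times C$; these closed immersions sit in a Cartesian square with the smooth maps $p_C$ and $p$. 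Since $p_C$ is smooth, Lemma~\ref{lem:Tor-ind} delivers the Tor-independence of $V \times C$ and $X \times \{t_j\}$ over $X \times C$, so \cite[Theorem 6.5.12]{LM} yields the required compatibility.

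Finally, since the two left-most vertical arrows in the diagram are isomorphisms and each row has the form $A \to B \to C \to 0$, a short diagram chase shows that $p^* \colon \Omega_*^{\rm alg}(X) \to \Omega_{*+n}^{\rm alg}(V)$ is also an isomorphism: surjectivity follows by lifting a class in $\Omega_{*+n}^{\rm alg}(V)$ to $\Omega_{*+n}(V)$, transporting through $(p^*)^{-1}$, and projecting via $\Phi_X$; injectivity follows by lifting a class in $\ker(p^*)$, using exactness of the bottom row to realize its image in $\Omega_{*+n}(V)$ as a difference $i_1^* - i_2^*$, and using the isomorphism $\bigoplus p_C^* \cong$ to pull back this witness to the top row, which then kills the original class after applying $\Phi_X$. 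The only delicate ingredient is the commutativity of the left square, and this is entirely handled by Lemma~\ref{lem:Tor-ind}, so I do not expect a serious obstacle in executing this plan.
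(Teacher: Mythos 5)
Your proposal follows essentially the same route as the paper's own proof: apply Theorem \ref{thm:FES-ST} to $X$ and to $V$, note that the vertical smooth pull-backs $p^*$ and $p_C^*$ are isomorphisms by \cite[Theorem 3.6.3]{LM} (since $V\times C\to X\times C$ is again a torsor under a rank-$n$ vector bundle), and finish by a diagram chase on the two right-exact rows. One correction on the single point you flag as delicate: your justification of the left square is misapplied. Lemma \ref{lem:Tor-ind} together with \cite[Theorem 6.5.12]{LM} is a push--pull statement --- it commutes an l.c.i.\ pull-back with a \emph{projective push-forward} across a Tor-independent Cartesian square --- and that is what is needed in the proof of the localization theorem (Theorem \ref{thm:localization**}), where one of the vertical maps is $\iota_{C*}$. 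In your square both maps are pull-backs, so no Tor-independence is required: the commutativity is an instance of functoriality of l.c.i.\ pull-backs (\cite[Theorem 6.5.11]{LM}), since $p_C\circ i_j^V=i_j^X\circ p$ as morphisms $V\times\{t_j\}\to X\times C$ and hence $(i_j^V)^*\circ p_C^*=(p_C\circ i_j^V)^*=(i_j^X\circ p)^*=p^*\circ (i_j^X)^*$. With that citation replaced, your argument coincides with the paper's.
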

\begin{proof}
We consider the commutative diagram
\[
\xymatrix{
{\underset{(C,t_1, t_2)}\bigoplus} 
\Omega_*(X \times C) \ar[r]^>>>>>>{i^*_1 - i^*_2} \ar[d]^{p^*} & \Omega_*(X) 
\ar[r]^{\Phi_X} \ar[d]^{p^*} & \Omega^{\rm alg}_*(X) \ar[d]^{p^*} \ar[r] &
0 \\
{\underset{(C,t_1, t_2)}\bigoplus} 
\Omega_{*+n}(V \times C) \ar[r]^>>>>{i^*_1 - i^*_2} & \Omega_{*+n}(V) 
\ar[r]^{\Phi_V} & \Omega^{\rm alg}_{*+n}(V) \ar[r] & 0,}
\]
where the rows are exact by Theorem \ref{thm:FES-ST}. The first two vertical arrows are isomorphisms by \cite[Theorem 3.6.3]{LM}. Hence, so is the third arrow by diagram chasing.
\end{proof}

Using the projective bundle formula \cite[Theorem 3.5.4]{LM} for $\Omega_*$, the argument of the proof of Theorem \ref{thm:H-Invariance} can be repeated in verbatim with $V$ replaced by $\mathbb{P}(V)$ to prove the following projective bundle formula for our cobordism theory. 

\begin{theorem}[Projective bundle formula]\label{proj bund sim-cobordism}
\label{thm:PBF}
Let $X \in \Sch_k$ and let $E$ be a rank $n+1$ vector bundle on $X$. Then, we have $\bigoplus_{j=0} ^n \Omega_{* - n + j} ^{\rm alg} (X) \overset{\simeq}{\to} \Omega_* ^{\rm alg} (\mathbb{P}({E})).$
\end{theorem}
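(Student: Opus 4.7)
The plan is to repeat the proof of Theorem \ref{thm:H-Invariance} verbatim, substituting the Levine--Morel projective bundle formula \cite[Theorem 3.5.4]{LM} for their homotopy invariance. Writing $p \colon \mathbb{P}(E) \to X$ and $\xi = \tilde{c}_1(\mathcal{O}_{\mathbb{P}(E)}(1))$, the candidate comparison map is
\[
\Theta_X \colon \bigoplus_{j=0}^n \Omega^{\rm alg}_{*-n+j}(X) \longrightarrow \Omega^{\rm alg}_*(\mathbb{P}(E)), \qquad (\alpha_j) \longmapsto \sum_{j=0}^n \xi^j \circ p^*(\alpha_j).
\]
This is well defined by Proposition \ref{operations for sim cobordism}, since both $p^*$ and $\tilde{c}_1(\mathcal{O}(1))$ descend to $\Omega^{\rm alg}_*$. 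For every smooth projective connected curve $C$ one has $\mathbb{P}(E) \times C \simeq \mathbb{P}(\pi_X^* E)$, where $\pi_X \colon X \times C \to X$ is the projection, so the same formula defines an analogous map $\Theta^{\Omega}_{X \times C}$ at the $\Omega_*$-level, and likewise $\Theta^{\Omega}_X$ on $\Omega_*(X)$.

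Next I would form the commutative diagram with exact rows provided by Theorem \ref{thm:FES-ST}:
\[
\xymatrix@C=0.6pc{
{\underset{(C,t_1,t_2)}\bigoplus}\,{\underset{j}\bigoplus}\,\Omega_{*-n+j}(X \times C) \ar[r]^>>>>>>>>{i_1^*-i_2^*} \ar[d]_{\cong} & {\underset{j}\bigoplus}\,\Omega_{*-n+j}(X) \ar[r] \ar[d]_{\cong} & {\underset{j}\bigoplus}\,\Omega^{\rm alg}_{*-n+j}(X) \ar[r] \ar[d]^{\Theta_X} & 0 \\
{\underset{(C,t_1,t_2)}\bigoplus}\,\Omega_*(\mathbb{P}(E) \times C) \ar[r]^>>>>>>>>{i_1^*-i_2^*} & \Omega_*(\mathbb{P}(E)) \ar[r] & \Omega^{\rm alg}_*(\mathbb{P}(E)) \ar[r] & 0,
}
\]
in which the first two vertical arrows are the Levine--Morel projective-bundle isomorphisms of \cite[Theorem 3.5.4]{LM}. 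The right-hand square commutes by naturality of $\Phi_{(-)}$, and a diagram chase via the five lemma will then force $\Theta_X$ to be an isomorphism, exactly as in the proof of Theorem \ref{thm:H-Invariance}.

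The only nontrivial point will be checking commutativity of the left-hand square, i.e.\ that $i_1^* - i_2^*$ intertwines the Levine--Morel projective-bundle isomorphisms for $X \times C$ and for $X$. This reduces to the base-change identity $i_j^* \circ p_C^* = p^* \circ i_j^*$ (functoriality of smooth pull-back on $\Omega_*$) together with the fact that $\xi$ on $\mathbb{P}(E) \times C$ is the pull-back of $\xi$ on $\mathbb{P}(E)$; both are formal consequences of the axioms of an oriented Borel--Moore $\mathbb{L}_*$-functor of geometric type, so no genuinely new input beyond the formal structure used in Theorems \ref{thm:localization**} and \ref{thm:H-Invariance} is required.
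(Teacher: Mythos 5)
Your proposal is correct and is essentially the paper's own argument: the paper proves the projective bundle formula by repeating the proof of Theorem \ref{thm:H-Invariance} verbatim, with the vertical isomorphisms now given by the Levine--Morel projective bundle formula \cite[Theorem 3.5.4]{LM}, and concluding by a diagram chase on the exact rows from Theorem \ref{thm:FES-ST}. Your explicit check of the left-hand square (via functoriality of l.c.i.\ pull-backs and compatibility of $i_j^*$ with $\tilde{c}_1$ of pulled-back bundles) is a point the paper leaves implicit, but it is the same formal input.
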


\section{$\Omega_{\rm alg}^*$ as an oriented cohomology theory}\label{section:OCT}
Recall from \cite[Definition 1.1.2]{LM} that an oriented cohomology theory $A^*$ on $\Sm_k$ is an additive contravariant functor to the
category of commutative graded rings with unit, such that $A^*$ has push-forward maps for projective morphisms and it satisfies the $\mathbb{A}^1$-homotopy invariance and projective bundle formula. Moreover, the push-forward and the pull-back maps commute in a Cartesian diagram of transverse morphisms.

On the bigger category $\Sch_k$, there is a notion of an oriented Borel-Moore homology theory (see \cite[Definition 5.1.3]{LM}). This requires some similar axioms, but a nontrivial one is the existence of pull-backs for l.c.i. morphisms. This ensures that an oriented Borel-Moore homology theory on $\Sch_k$ restricted onto $\Sm_k$ gives an oriented cohomology theory.

Our goal in this section is to conclude that $\Omega^*_{\rm alg}$ is an oriented cohomology theory on $\Sm_k$ and $\Omega_* ^{\rm alg}$ is an oriented Borel-Moore homology theory on $\Sch_k$.

\subsection{Pull-back via l.c.i. morphisms}\label{sec:lci} By Definition \ref{basic definitions for cobordism cycles}, one can pull-back cobordism cycles via \emph{smooth} quasi-projective morphisms.
One further step is needed to turn $\Omega^{\rm alg}_*$ into an oriented Borel-Moore homology : to show that one can pull-back also via l.c.i. morphisms $f\colon X \to Y$ for $X, Y \in \Sch_k$. Recall that $f\colon X \to Y$ is an l.c.i. morphism if it factors as the composition $f= q \circ i\colon X\to P\to Y$, where $i$ is a regular embedding and $q$ is a smooth quasi-projective morphism. Since we have $q^*$ already, defining $i^*$ is the first technical issue to resolve. We shall demonstrate the existence of such pull-backs on $\Omega^{\rm alg}_*$ using Proposition \ref{prop:Alg-st} and the analogous construction for the algebraic cobordism in \cite[\S 5, 6]{LM}.

Recall from \cite[Definition 2.2.1]{Fulton} that a \emph{pseudo-divisor} $D$ on a scheme $X$ is a triple $D= (Z, \mathcal{L}, s)$, where $Z \subset X$ is a closed subset, $\mathcal{L}$ is an invertible sheaf on $X$, and $s$ is a section of $\mathcal{L}$ on $X$ such that the support of the zero scheme of $s$ is contained in $Z$. We call $Z$, \emph{the support of $D$} and write it as $|D|$. We call the zero scheme $\{s=0\}$, \emph{the divisor of $D$} and write it as $\Div(D)$. 

Given $X \in \Sch_k$ and a pseudo-divisor $D$ on $X$, Levine and Morel defined in \cite[\S 6.1.2]{LM} a graded group ${{\Omega}_*(X)}_D$ with a natural map $\theta_X\colon{{\Omega}_*(X)}_D \to \Omega_*(X)$, which is an isomorphism by \cite[Theorem 6.4.12]{LM}. Roughly speaking, this is the group on which the ``intersection product'' by the pseudo-divisor $D$ is well-defined so that we have a map $D(-)\colon {{\Omega}_*(X)}_D \to \Omega_{*-1}(|D|)$. (See \S~\ref{section:appendix} for the definitions of $\Omega_* (X)_D$ and $D(-)$.) This yields
\begin{equation}\label{eqn:Int-div}
i^*_D\colon \Omega_*(X) \stackrel{\theta^{-1}_X}{\underset{\simeq}\to} {{\Omega}_*(X)}_D \xrightarrow{D(-)} \Omega_{*-1}(|D|).
\end{equation} 

It follows from Proposition \ref{prop:Alg-st} and Lemma \ref{lem:Int-ps-div} that $i^*_D$ descends to
\begin{equation}\label{eqn:Int-div-st}
i^*_D\colon \Omega^{\rm alg}_*(X) \to \Omega^{\rm alg}_{*-1}(|D|).
\end{equation}

\subsubsection{Gysin map for regular embedding}Let $\iota_Z\colon Z \to X$ be a regular embedding of codimension $d$ in $\Sch_k$. We use \eqref{eqn:Int-div-st} and the technique of the deformation to the normal bundle to define the pull-back map $\iota^* _Z\colon \Omega_* ^{\rm alg} (X) \to \Omega_{*-d} ^{\rm alg} (Z)$, that we call \emph{the Gysin map} for the cobordism classes. 
Without going into the full construction of the deformation to the normal bundle, we recall here only the necessary summary from \cite[\S 6.5.2 (6.10)]{LM}:

\begin{proposition}\label{Gysin prop}Let $\iota_Z\colon Z \to X$ be a regular embedding in $\Sch_k$. Then, there exists a scheme $U \in \Sch_k$, a closed immersion $i_N \colon N \to U$ of codimension one, a surjective morphism $\mu\colon U \to X \times \mathbb{P}^1$, and its restriction $\mu_N \colon N \to Z \times 0$, that form the commutative diagram
\[
\xymatrix{ N \ar[rr] ^{i_N} \ar[d] _{\mu_N} & & U \ar[d] _{\mu} \\
Z \times 0 \ar[r]^{{\rm Id} \times 0} & Z \times \mathbb{P} ^1 \ar[r]^{\iota _Z\times {\rm Id}} & X \times \mathbb{P}^1}
\]
such that

\emph{(1)} $N$ is isomorphic to the normal vector bundle $N_{Z/X}$ of $Z$ in $X$ over $Z$ under the identification $Z = Z \times 0$, and

\emph{(2)} the restriction $\mu\colon U \backslash N \to X \times \left( \mathbb{P}^1 \backslash \{  0 \} \right)$ is an isomorphism of schemes.
\end{proposition}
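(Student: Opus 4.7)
The plan is to realize $U$ as an open subscheme of the blow-up of $X \times \mathbb{P}^1$ along $Z \times \{0\}$, via the classical \emph{deformation to the normal bundle} construction. Since $\iota_Z : Z \to X$ is a regular embedding of codimension $d$, the induced closed immersion $Z \times \{0\} \hookrightarrow X \times \mathbb{P}^1$ is a regular embedding of codimension $d+1$, with conormal sheaf $\mathcal{I}/\mathcal{I}^2 \cong (N_{Z/X})^\vee \oplus \mathcal{O}_Z$. Let $\pi : \mathcal{B} \to X \times \mathbb{P}^1$ denote the blow-up along this center and let $E \subset \mathcal{B}$ be the exceptional divisor. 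By the standard description of blow-ups along regular embeddings, one has $E \cong \mathbb{P}\bigl(N_{Z/X} \oplus \mathcal{O}_Z\bigr)$ as a $\mathbb{P}^d$-bundle over $Z$.

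Next I would identify the strict transform $\widetilde{X}_0 \subset \mathcal{B}$ of $X \times \{0\}$. Since $X \times \{0\}$ contains the blow-up center $Z \times \{0\}$ regularly, $\widetilde{X}_0$ is naturally isomorphic to $\mathrm{Bl}_Z X$, and its intersection with the exceptional divisor $E$ is the ``section at infinity'' $\mathbb{P}(N_{Z/X}) \hookrightarrow \mathbb{P}(N_{Z/X} \oplus \mathcal{O}_Z)$. I then define
\[
U := \mathcal{B} \setminus \widetilde{X}_0, \quad N := E \cap U, \quad i_N : N \hookrightarrow U \text{ the inclusion}, \quad \mu := \pi|_U.
\]
Since $E$ is a Cartier divisor in $\mathcal{B}$, the inclusion $i_N$ is a codimension-one closed immersion. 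Moreover, removing the section at infinity from a projective bundle completion yields the vector bundle itself, so
\[
N \;\cong\; \mathbb{P}(N_{Z/X} \oplus \mathcal{O}_Z)\setminus \mathbb{P}(N_{Z/X}) \;\cong\; N_{Z/X},
\]
which verifies (1), with $\mu_N : N \to Z \times \{0\}$ being the bundle projection.

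For (2), the center $Z \times \{0\}$ is disjoint from $X \times (\mathbb{P}^1 \setminus \{0\})$, so $\pi$ restricts to an isomorphism over this open; this image lies inside $U$ (being disjoint from both $E$ and $\widetilde{X}_0$) and precisely equals $U \setminus N$. The surjectivity assertion is seen by combining this isomorphism with the fact that $\mu_N : N \to Z \times \{0\}$ is the structure map of a vector bundle (hence surjective onto $Z \times \{0\}$), together with the compatibility diagram that factors $\mu_N$ through $Z \times \mathbb{P}^1$. The main subtlety, and the only step requiring genuine care, is verifying the identification $\widetilde{X}_0 \cap E \cong \mathbb{P}(N_{Z/X})$ as the correct section at infinity; this can be checked locally using a regular sequence defining $Z \subset X$ and the standard coordinate description of the blow-up, at which point the remaining claims are immediate from the deformation diagram. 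All other properties are formal consequences of standard blow-up theory, and this is precisely the content recalled from \cite[\S 6.5.2, (6.10)]{LM}.
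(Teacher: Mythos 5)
Your construction is the standard deformation to the normal bundle, which is exactly what the paper itself recalls from \cite[\S 6.5.2]{LM} without independent proof, and your verifications of (1), of (2), of the codimension-one closed immersion $i_N$ (via the Cartier divisor $E$), and of the commutativity of the diagram are correct and are the intended argument.

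The gap is in the surjectivity step. For your $U = \mathcal{B}\setminus \widetilde{X}_0$ the map $\mu$ is \emph{not} surjective when $Z \subsetneq X$: set-theoretically $\mu^{-1}(X\times\{0\}) = \widetilde{X}_0 \cup E$ in the blow-up, so after removing $\widetilde{X}_0$ the part of $U$ lying over $X\times\{0\}$ is exactly $N = E\setminus\widetilde{X}_0$, which maps onto $Z\times\{0\}$ only. Hence $\mu(U) = \left(X\times(\mathbb{P}^1\setminus\{0\})\right)\cup\left(Z\times\{0\}\right)$, and every point of $(X\setminus Z)\times\{0\}$ is missed; your justification accounts precisely for this image and nothing more, so it does not prove what it claims. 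Moreover, no modification of $U$ can repair this: any point of $U$ mapping to $(X\setminus Z)\times\{0\}$ would necessarily lie in $U\setminus N$ (since $\mu(N)\subseteq Z\times\{0\}$), contradicting property (2), which forces $\mu(U\setminus N)\subseteq X\times(\mathbb{P}^1\setminus\{0\})$. In other words, ``surjective'' in the proposition is a slip in the statement itself, incompatible with (1) and (2) unless $Z=X$; the correct assertion is surjectivity onto $(X\times(\mathbb{P}^1\setminus\{0\}))\cup(Z\times\{0\})$, and this is harmless because the construction of the Gysin map in Definition \ref{defn:Gysin} uses only (1) (through homotopy invariance, Theorem \ref{thm:H-Invariance}), (2), the localization sequence for $N\subset U$ (Theorem \ref{thm:localization**}), and Lemma \ref{push-pull}. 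You should have flagged this inconsistency rather than asserting surjectivity with an argument that covers only $X\times(\mathbb{P}^1\setminus\{0\})$ and $Z\times\{0\}$.
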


We have the following analogue of \cite[Lemma 6.5.2]{LM}:
\begin{lemma}\label{push-pull}The composition $i_N ^* \circ {i_N}_* \colon \Omega_{*+1} ^{\rm alg} (N) \to \Omega_{*+1} ^{\rm alg} (U) \to \Omega_* ^{\rm alg} (N)$ is zero, where ${i_N}_*$ is the push-forward via the closed immersion $i_N$, and $i_N^*$ is the pull-back by the divisor $N$ defined in \eqref{eqn:Int-div-st}.
\end{lemma}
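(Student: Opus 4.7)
The plan is to deduce this lemma directly from the corresponding statement for Levine--Morel's algebraic cobordism, namely \cite[Lemma 6.5.2]{LM}, which asserts that the analogous composition $i_N^* \circ (i_N)_* \colon \Omega_{*+1}(N) \to \Omega_{*+1}(U) \to \Omega_*(N)$ already vanishes. I will transfer this vanishing through the canonical surjection $\Phi \colon \Omega_* \twoheadrightarrow \Omega_*^{\rm alg}$.

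The main point is that both operations in the composition are compatible with $\Phi$, giving a commutative diagram
\[
\xymatrix{
\Omega_{*+1}(N) \ar[r]^{(i_N)_*} \ar@{->>}[d]^{\Phi_N} & \Omega_{*+1}(U) \ar[r]^{i_N^*} \ar@{->>}[d]^{\Phi_U} & \Omega_*(N) \ar@{->>}[d]^{\Phi_N} \\
\Omega_{*+1}^{\rm alg}(N) \ar[r]^{(i_N)_*} & \Omega_{*+1}^{\rm alg}(U) \ar[r]^{i_N^*} & \Omega_*^{\rm alg}(N).
}
\]
Compatibility of the push-forward $(i_N)_*$ with $\Phi$ is immediate, since it is defined on representative cobordism cycles by composition with $i_N$, while $\Phi$ is a quotient map. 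Compatibility of the pseudo-divisor pull-back $i_N^*$ is built directly into the construction~\eqref{eqn:Int-div-st}, which was justified using Proposition~\ref{prop:Alg-st} and Lemma~\ref{lem:Int-ps-div}.

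With the diagram in hand, the conclusion is a short diagram chase: given any $\beta \in \Omega_{*+1}^{\rm alg}(N)$, surjectivity of $\Phi_N$ (Lemma~\ref{lem:surj}) produces a lift $\alpha \in \Omega_{*+1}(N)$ with $\Phi_N(\alpha) = \beta$, and then
\[
i_N^* (i_N)_*(\beta) \; = \; i_N^* (i_N)_* \Phi_N(\alpha) \; = \; \Phi_N\bigl(i_N^* (i_N)_*(\alpha)\bigr) \; = \; \Phi_N(0) \; = \; 0.
\]

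I do not foresee any genuine obstacle for this reduction. The only nontrivial ingredient, namely the fact that the pseudo-divisor pull-back descends to $\Omega_*^{\rm alg}$, has already been established in \eqref{eqn:Int-div-st}, and the geometric heart of the argument (triviality of the normal bundle $N_{N/U}$, ultimately because $O_U(N)$ admits a divisor representative disjoint from $N$ via the rational equivalence of $\{0\}$ and $\{\infty\}$ on $\mathbb{P}^1$) is absorbed into the Levine--Morel statement we are quoting.
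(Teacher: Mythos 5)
Your proposal is correct and is essentially the paper's own argument: the paper also quotes \cite[Lemma 6.5.2]{LM} for the vanishing of $i_N^*\circ (i_N)_*$ on $\Omega_*$ and transfers it to $\Omega_*^{\rm alg}$ via the commutative square whose vertical arrows are the natural surjections $\Phi$, the compatibility of the pseudo-divisor pull-back with $\Phi$ being exactly what \eqref{eqn:Int-div-st} (via Lemma \ref{lem:Int-ps-div}) provides. Your diagram chase using surjectivity of $\Phi_N$ is the same deduction, only spelled out in slightly more detail.
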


\begin{proof}Consider the commutative diagram
\[
\begin{CD}\Omega_{*+1} (N) @>{i_N ^* \circ {i_N}_*}>> \Omega_* (N) \\
@VVV @VVV\\
\Omega_{*+1} ^{\rm alg} (N) @>{i_N^* \circ {i_N}_*}>> \Omega_* ^{\rm alg} (N),\end{CD}
\]
where the vertical maps are the natural surjections. Since the top map on the algebraic cobordism is zero by \cite[Lemma 6.5.2]{LM}, the bottom map is also zero.\end{proof}

By Theorem \ref{localization for sim-cobordism}, we have the localization exact sequence 
\[
\Omega_{*+1} ^{\rm alg} (N) \overset{{i_N}_*}{\to} \Omega_{*+1} ^{\rm alg} (U) 
\overset{j^*}{\to} \Omega_{*+1} ^{\rm alg} (U \backslash N) \to 0,
\]
that gives an isomorphism 
\begin{equation}\label{conseq of local}
(j^*)^{-1} \colon \Omega_{*+1} ^{\rm alg} (U \backslash N) \to \frac{\Omega_{*+1}  ^{\rm alg} (U)}{ {i_N}_* ( \Omega_{*+1} ^{\rm alg} (N))}.\end{equation}
Combining \eqref{conseq of local} with Lemma \ref{push-pull}, we see that the composition 
\begin{equation}\label{conseq of local 2}
\alpha \colon \Omega_{*+1} ^{\rm alg} (U \backslash N) \overset{ (j^*)^{-1}}{\to} \frac{\Omega_{*+1}  ^{\rm alg} (U)}{ {i_N}_* ( \Omega_{*+1} ^{\rm alg} (N))} \overset{i_N^*}{\to} \Omega_* ^{\rm alg} (N)
\end{equation}
is well-defined.

\begin{definition}\label{defn:Gysin}For a regular embedding $\iota_Z \colon Z \to X$ of codimension $d$ in $\Sch_k$, the \emph{Gysin morphism} $\iota ^*_Z \colon \Omega_*^{\rm alg} (X) \to \Omega_{*-d} ^{\rm alg} (Z)$ is defined to be the composition
\[
\Omega^{\rm alg}_* (X) \overset{pr_1 ^*}{\to} \Omega^{\rm alg}_{*+1}(X \times (\mathbb{P}^1 \backslash \{0\} )) \underset{\simeq}{\overset{\mu^*}{\to} } \Omega^{\rm alg}_{*+1}(U \backslash N) 
\overset{\alpha}{\to} \Omega^{\rm alg}_{*}(N) 
\underset{\simeq}{\overset{(\mu_N ^*)^{-1}}{\to}} \Omega^{\rm alg}_{*-d}(Z),
\]
where $pr_1$ is the projection, $\mu$ is the isomorphism of Proposition \ref{Gysin prop}(2), $\alpha$ is the map in \eqref{conseq of local 2}, and $\mu_N\colon N \to Z$ is the normal bundle of Proposition \ref{Gysin prop}(1) so that $\mu_N ^*$ is an isomorphism by Theorem \ref{thm:H-Invariance}.
\end{definition}

We have the following basic properties for the Gysin maps on $\Omega_* ^{\rm alg}$ that can be easily deduced from \cite[Lemmas 6.5.6, 6.5.7, Theorem 6.5.8]{LM} combined with the surjectivity of $\Phi_X \colon \Omega_* (X) \to \Omega_* ^{\rm alg} (X)$, as in the proof of Lemma \ref{push-pull}. We skip the details:

\begin{proposition}\label{prop:Gysin}The Gysin maps on $\Omega_* ^{\rm alg}$ satisfy the following:

\emph{(1)} Let $\iota \colon Z \to X$ be a regular embedding of codimension one. Then, as operators $\Omega_* ^{\rm alg}(X) \to \Omega_{*-1} ^{\rm alg}(Z)$, the pull-back $Z(-)$ by the divisor $Z$ is identical to the Gysin pull-back $\iota^*$.

\emph{(2)} Let $\iota \colon Z \to X$ be a regular embedding, let $p\colon Y \to X$ be a smooth quasi-projective morphism, and let $s\colon Z \to Y$ be a section of $Y$ over $Z$. Then, $s^* \circ p^* = \iota^*$.

\emph{(3)} Let $\iota\colon Z \to Z'$ and $\iota' \colon Z ' \to X$ be regular embeddings. Then, $(\iota' \circ \iota)^* = \iota^* \circ {\iota'}^*$.

\end{proposition}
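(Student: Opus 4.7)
The plan is to reduce each of (1), (2), (3) to the corresponding statement already established for the Levine-Morel cobordism $\Omega_*$, using the surjectivity of the natural map $\Phi_X \colon \Omega_*(X) \to \Omega_*^{\rm alg}(X)$ of Lemma \ref{lem:surj}. The pivotal intermediate step is to verify that, for every regular embedding $\iota \colon Z \to X$ of codimension $d$, the Gysin morphism of Definition \ref{defn:Gysin} fits into a commutative square
\[
\begin{CD}
\Omega_*(X) @>{\iota^*}>> \Omega_{*-d}(Z)\\
@V{\Phi_X}VV @V{\Phi_Z}VV\\
\Omega_*^{\rm alg}(X) @>{\iota^*}>> \Omega_{*-d}^{\rm alg}(Z).
\end{CD}
\]

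To establish this square, I would trace $\iota^*$ through Definition \ref{defn:Gysin}: each constituent operation --- the smooth pull-back $pr_1^*$, the isomorphism $\mu^*$, the push-forward ${i_N}_*$, the divisor pull-back $i_N^*$ of \eqref{eqn:Int-div-st}, and the inverse of the homotopy-invariance isomorphism $\mu_N^*$ --- commutes with $\Phi$ on source and target. For $pr_1^*$, $\mu^*$, and ${i_N}_*$ this is immediate from Proposition \ref{operations for sim cobordism}; for $(\mu_N^*)^{-1}$ it follows from Theorem \ref{thm:H-Invariance} by naturality; and for $i_N^*$ it is exactly the content of \eqref{eqn:Int-div-st}, which factors the \cite{LM} divisor pull-back through the kernel described in Proposition \ref{prop:Alg-st} by virtue of Lemma \ref{lem:Int-ps-div} from the Appendix. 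Composing these compatibilities yields the square.

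Once the square is in hand, each claim is disposed of exactly as in Lemma \ref{push-pull}. For (1), given $\alpha \in \Omega_*^{\rm alg}(X)$, lift to $\tilde\alpha \in \Omega_*(X)$ via Lemma \ref{lem:surj}; by \cite[Lemma 6.5.6]{LM} we have $\iota^*(\tilde\alpha) = Z(\tilde\alpha)$ in $\Omega_{*-1}(Z)$, and applying $\Phi_Z$, combined with the evident compatibility of $Z(-)$ with $\Phi$ from \eqref{eqn:Int-div-st}, yields $\iota^*(\alpha) = Z(\alpha)$ in $\Omega_{*-1}^{\rm alg}(Z)$. Assertions (2) and (3) are handled identically by invoking \cite[Lemma 6.5.7]{LM} and \cite[Theorem 6.5.8]{LM} respectively; for (2) one additionally needs that the smooth pull-back $p^*$ and the section pull-back $s^*$ are themselves compatible with $\Phi$, which follows from Proposition \ref{operations for sim cobordism} for $p^*$ and from the Gysin compatibility just established for $s^*$. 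The main technical burden lies entirely in the verification of the square above; once that is granted, surjectivity of $\Phi$ delivers all three assertions with no further work.
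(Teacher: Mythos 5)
Your proposal is correct and follows essentially the same route as the paper, which deduces all three identities from \cite[Lemmas 6.5.6, 6.5.7, Theorem 6.5.8]{LM} together with the surjectivity of $\Phi_X\colon \Omega_*(X)\to\Omega_*^{\rm alg}(X)$, arguing exactly as in the proof of Lemma \ref{push-pull}. The commutative square you verify (compatibility of the Gysin map with $\Phi$ through each constituent of Definition \ref{defn:Gysin}, with \eqref{eqn:Int-div-st} and Lemma \ref{lem:Int-ps-div} handling the divisor pull-back) is precisely the detail the paper leaves implicit when it says the deduction is easy and skips it.
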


\subsubsection{Pull-back for l.c.i. morphisms} Let $f \colon X \to Y$ be an l.c.i. morphism in $\Sch_k$ with a factorization $f = p \circ i \colon X\to P \to Y$, where $p$ is smooth quasi-projective and $i$ is a regular embedding. We have $p^*$ by Definition \ref{basic definitions for cobordism cycles}, and we have the Gysin pull-back $i^*$ by Definition \ref{defn:Gysin}. So, one wishes to define $f^*$ by taking the composition $i^* \circ p^*$. To show that this definition is meaningful, one needs to know that if $p_1\circ i_1 = p_2 \circ i_2$ are two such factorizations, then $i_1 ^* \circ p_1 ^* = i_2 ^* \circ p_2 ^*$. However, this fact follows at once from such an equality on the level of algebraic cobordism, as shown in \cite[Lemma 6.5.9]{LM}, and from the surjection $\Phi _{ - }\colon \Omega_* (-) \to \Omega^{\rm alg}_* (-)$. Thus we have:

\begin{definition}\label{defn:lci pull-back}Let $f \colon X \to Y$ be an l.c.i. morphism that has a factorization $f = p \circ i\colon X \to P \to Y$, where $i$ is a regular embedding and $p$ is smooth quasi-projective. The pull-back $f^*$ on $\Omega_* ^{\rm alg} (Y)$ is defined to be $i^* \circ p^*$. 
\end{definition}

One has the following properties of the l.c.i. pull-backs on $\Omega_* ^{\rm alg}$ as proven for $\Omega_*$ in \cite[Theorems 6.5.11, 6.5.12, 6.5.13]{LM}. The proof follows immediately from \emph{ibids.} and we omit the arguments.

\begin{theorem}\label{thm:lci prop}The pull-backs on $\Omega_* ^{\rm alg}$ via l.c.i. morphisms have the following properties:

\emph{(1)} If $f_1 \colon X \to Y$, $f_2 \colon Y \to Z$ are l.c.i. morphisms in $\Sch_k$, then $(f_2 \circ f_1)^* = f_1 ^* \circ f_2 ^*$.

\emph{(2)} Suppose $f\colon X \to Z$ and $g\colon Y \to Z$ are Tor-independent morphisms in $\Sch_k$, where $f$ is l.c.i. and $g$ is projective. Then, for the Cartesian square 
\[
\begin{CD}
X \times_Z Y @>{pr_2}>> Y \\
@V{pr_1}VV @V{g}VV\\
X @>{f}>> Z,
\end{CD}
\] 
we have $f^* \circ g_* = {pr_1}_* \circ pr_2 ^*$.

\emph{(3)} Let $f_i \colon X_i \to Y_i$ for $i=1,2$ be two l.c.i. morphisms in $\Sch_k$. Then, for $\eta_i \in \Omega_* ^{\rm alg} (Y_i)$ with $i=1,2$, we have $ (f_1 \times f_2)^* (\eta_1 \times \eta_2) = f_1 ^* (\eta_1) \times f_2 ^* (\eta_2)$.
\end{theorem}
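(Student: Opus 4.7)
The plan is to reduce each of the three assertions to the corresponding statement for Levine--Morel's algebraic cobordism $\Omega_*$, namely \cite[Theorems 6.5.11, 6.5.12, 6.5.13]{LM}, by exploiting the surjection $\Phi_X\colon \Omega_*(X)\twoheadrightarrow \Omega^{\rm alg}_*(X)$ of Lemma~\ref{lem:surj} and the construction of l.c.i.~pull-backs on $\Omega^{\rm alg}_*$ in Definitions~\ref{defn:Gysin} and \ref{defn:lci pull-back}. The central observation is the following: every operation we have built on $\Omega^{\rm alg}_*$ (projective push-forward by Proposition~\ref{prop:Basic-small}, smooth pull-back and first Chern class by Proposition~\ref{operations for sim cobordism}, pull-back by a pseudo-divisor via \eqref{eqn:Int-div-st}, Gysin pull-back for a regular embedding, and thus the l.c.i.~pull-back) is by construction defined so that the square
\[
\begin{CD}
\Omega_*(X) @>{\mathrm{op}}>> \Omega_*(Y) \\
@V{\Phi_X}VV @VV{\Phi_Y}V \\
\Omega^{\rm alg}_*(X) @>{\mathrm{op}}>> \Omega^{\rm alg}_*(Y)
\end{CD}
\]
commutes. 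Consequently, any identity among compositions of such operations that holds on $\Omega_*$ descends to the quotient $\Omega^{\rm alg}_*$.

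For part (1), given l.c.i.\ morphisms $f_1\colon X\to Y$ and $f_2\colon Y\to Z$, the composite $f_2\circ f_1$ is again l.c.i.; by \cite[Theorem 6.5.11]{LM} one has $(f_2\circ f_1)^* = f_1^*\circ f_2^*$ on $\Omega_*(Z)$. Composing with the vertical surjection $\Phi_Z$ and using commutativity of the square above for each of the operations $f_1^*$, $f_2^*$, and $(f_2\circ f_1)^*$, surjectivity of $\Phi_Z$ yields the identity on $\Omega^{\rm alg}_*(Z)$.

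For part (2), given Tor-independent $f$ l.c.i.\ and $g$ projective, \cite[Theorem 6.5.12]{LM} gives the base-change identity $f^*\circ g_* = (pr_1)_*\circ pr_2^*$ on $\Omega_*(Z)$. Since both sides are compositions of operations (push-forward and pull-back) that are compatible with $\Phi_{(-)}$, this identity descends through the surjection $\Phi_Z$ to the desired identity on $\Omega^{\rm alg}_*(Z)$. Part (3) is handled identically, using \cite[Theorem 6.5.13]{LM} and the compatibility of $\Phi_{(-)}$ with external products, which is built into Proposition~\ref{prop:Basic-small}(3). The only technical point to verify in each case is that the l.c.i.\ pull-back on $\Omega^{\rm alg}_*$, being defined through the composition $i^*\circ p^*$ of Definition~\ref{defn:lci pull-back}, is the image under $\Phi_{(-)}$ of the l.c.i.\ pull-back on $\Omega_*$; this follows by inspection from the construction of the Gysin map via the deformation to the normal bundle, which was lifted verbatim from the analogous construction for $\Omega_*$ using the localization sequence (Theorem~\ref{localization for sim-cobordism}) and homotopy invariance (Theorem~\ref{extended homotopy sim}). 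The main, and essentially the only, thing to be careful about is this compatibility of the l.c.i.~pull-back with $\Phi$, after which all three statements become formal consequences of their $\Omega_*$-counterparts.
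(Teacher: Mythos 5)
Your proposal is correct and follows exactly the route the paper takes: the paper's proof of this theorem simply cites \cite[Theorems 6.5.11, 6.5.12, 6.5.13]{LM} and omits the argument, the implicit mechanism being precisely the one you spell out, namely that the surjections $\Phi_{(-)}\colon \Omega_*\twoheadrightarrow\Omega^{\rm alg}_*$ intertwine all the operations involved (as already used in Lemma~\ref{push-pull} and Proposition~\ref{prop:Gysin}), so the identities on $\Omega_*$ descend. Your added verification that the l.c.i.\ pull-back on $\Omega^{\rm alg}_*$ is compatible with $\Phi$, by inspecting each step of the deformation-to-the-normal-bundle construction, is the right (and only) point needing care, and it is handled correctly.
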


\begin{corollary}\label{pull-back for smooth varieties}Let $f\colon X \to Y$ be any morphism of smooth schemes. Then, there is a well-defined pull-back $f^*\colon \Omega^* _{\rm alg} (Y) \to \Omega^* _{\rm alg} (X)$. If $f\colon X \to Y$, $g\colon Y \to Z$ are any morphisms of smooth schemes, then $(g\circ f)^* = f^* \circ g^*$.
\end{corollary}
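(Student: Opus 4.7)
The plan is to reduce the corollary to the l.c.i.\ pull-back machinery already established in Definition \ref{defn:lci pull-back} and Theorem \ref{thm:lci prop}, by observing that every morphism between smooth schemes is automatically l.c.i.

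First I would factor $f \colon X \to Y$ through its graph. Since $Y$ is smooth, the graph morphism $\Gamma_f \colon X \to X \times Y$ sending $x \mapsto (x, f(x))$ is a regular embedding (being a section of the smooth projection $pr_1 \colon X \times Y \to X$, it is a regular embedding of codimension $\dim Y$). On the other hand, the second projection $pr_2 \colon X \times Y \to Y$ is smooth and quasi-projective because $X$ is smooth and quasi-projective. Since $f = pr_2 \circ \Gamma_f$, this exhibits $f$ as an l.c.i.\ morphism. By Definition \ref{defn:lci pull-back} we then obtain a well-defined map $f^* = \Gamma_f^* \circ pr_2^* \colon \Omega_*^{\rm alg}(Y) \to \Omega_{*+d}^{\rm alg}(X)$, where $d = \dim X - \dim Y$ is the relative dimension. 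Translating to the cohomological indexing $\Omega_{\rm alg}^{n-d}(Z) \colon= \Omega_d^{\rm alg}(Z)$ on equidimensional smooth schemes from Definition \ref{definition of sim cobordism}, this degree shift is precisely what is needed to yield a degree-preserving map $f^* \colon \Omega_{\rm alg}^*(Y) \to \Omega_{\rm alg}^*(X)$.

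For the functoriality, given a second morphism $g \colon Y \to Z$ of smooth schemes, the composition $g \circ f$ is again a morphism between smooth schemes, hence l.c.i.\ by the same graph argument. Applying Theorem \ref{thm:lci prop}(1) directly gives $(g \circ f)^* = f^* \circ g^*$.

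There is no real obstacle here; the only thing to check carefully is that Definition \ref{defn:lci pull-back} does not depend on the chosen factorization, but this was already built into the definition (via the remark preceding it, which invokes \cite[Lemma~6.5.9]{LM} lifted through the surjection $\Phi \colon \Omega_* \twoheadrightarrow \Omega_*^{\rm alg}$). So once the factorization through the graph is identified, the corollary is an immediate specialization of Theorem \ref{thm:lci prop}(1) to the subcategory $\Sm_k$.
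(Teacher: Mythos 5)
Your proposal is correct and follows exactly the paper's argument: factor $f$ as $pr_2 \circ \Gamma_f \colon X \to X \times Y \to Y$, note this exhibits $f$ as l.c.i., and invoke Definition \ref{defn:lci pull-back} and Theorem \ref{thm:lci prop}(1). The extra details you supply (why $\Gamma_f$ is a regular embedding, the degree bookkeeping under cohomological indexing, and the independence of factorization) are just the steps the paper leaves implicit.
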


\begin{proof}Any morphism $f\colon X \to Y$ of smooth schemes is an l.c.i. morphism, with a factorization $f = pr_2 \circ \Gamma_f \colon X \to X \times Y \to Y$. The rest follows immediately.
\end{proof}

The main results proven in \S~\ref{subsection :Fundamental-STC} and 
\S~\ref{sec:lci} can now be summarized as follows:

\begin{theorem}\label{thm:universal}The theory $\Omega^* _{\rm alg}$ is an oriented cohomology theory on $\Sm_k$ that respects algebraic equivalence, and it is universal among such theories. In other words, for any oriented cohomology theory $A^*$ that respects algebraic equivalence, there exists a unique morphism of oriented cohomology theories $\theta \colon \Omega^* _{\rm alg} \to A^*$ on $\Sm_k$.

Similarly, the theory $\Omega_* ^{\rm alg}$ is an oriented Borel-Moore homology theory on $\Sch_k$ that respects algebraic equivalence, and it is universal among such theories. 
\end{theorem}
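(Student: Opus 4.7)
My approach is to assemble the structural results already established in Sections~\ref{subsection :Fundamental-STC} and~\ref{sec:lci}, verify that they exhaust the axioms of \cite[Definitions 1.1.2, 5.1.3]{LM}, and then deduce universality by factoring the universal morphism out of $\Omega_*$ through the quotient $\Phi_X \colon \Omega_* \to \Omega^{\rm alg}_*$.

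\textbf{Step 1 (Structure).} To see that $\Omega^{\rm alg}_*$ is an oriented Borel-Moore homology theory on $\Sch_k$, I would collect: that it is an oriented Borel-Moore $\mathbb{L}_*$-functor of geometric type (Proposition following Lemma~\ref{lem:surj}); that it carries l.c.i.\ pull-backs with all the expected compatibilities (Definition~\ref{defn:lci pull-back} and Theorem~\ref{thm:lci prop}); and that it satisfies the localization sequence (Theorem~\ref{thm:localization**}), the $\mathbb{A}^1$-homotopy invariance (Theorem~\ref{thm:H-Invariance}), and the projective bundle formula (Theorem~\ref{thm:PBF}). Together these match the remaining axioms of \cite[Definition 5.1.3]{LM} beyond the geometric-type data. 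The relation (Equiv) of Definition~\ref{precobordism definition} directly ensures that $\Omega^{\rm alg}_*$ respects algebraic equivalence in the sense of Definition~\ref{def:respects alg equiv}. For the cohomological version, Corollary~\ref{pull-back for smooth varieties} supplies pull-backs for arbitrary morphisms in $\Sm_k$, so the cohomologically indexed restriction $\Omega^*_{\rm alg}$ is an oriented cohomology theory on $\Sm_k$ in the sense of \cite[Definition 1.1.2]{LM}.

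\textbf{Step 2 (Universality).} Given an oriented Borel-Moore homology theory $A_*$ on $\Sch_k$ that respects algebraic equivalence, the universality of $\Omega_*$ in \cite[Theorem 7.1.1]{LM} yields a unique morphism $\vartheta \colon \Omega_* \to A_*$ of oriented Borel-Moore homology theories. I then need to show $\vartheta$ factors through $\Phi_X$. By Proposition~\ref{prop:Alg-st}, $\ker(\Phi_X)$ is generated by elements of the shape $f_*\bigl(\tilde{c}_1(L)(1_Y) - \tilde{c}_1(M)(1_Y)\bigr)$ with $L \sim M$ on $Y$. Since $\vartheta$ commutes with projective push-forward and with first Chern class operators, and since the assumption on $A_*$ gives $\tilde{c}_1(L) = \tilde{c}_1(M)$ on $A_*(Y)$, these generators map to zero in $A_*(X)$. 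The induced map $\theta_A \colon \Omega^{\rm alg}_* \to A_*$ is then the desired morphism. Uniqueness is forced by the surjectivity of $\Phi_X$ (Lemma~\ref{lem:surj}) together with uniqueness of $\vartheta$. The exact same argument, restricted to $\Sm_k$ with cohomological indexing, handles the universal property of $\Omega^*_{\rm alg}$ among oriented cohomology theories respecting algebraic equivalence.

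\textbf{Main obstacle.} The bulk of the work is already done; the subtle point is verifying that the list of axioms in \cite[Definition 5.1.3]{LM} really is exhausted by the items cited in Step~1, in particular the compatibility of l.c.i.\ pull-backs with projective push-forward under Tor-independent squares and with external products. For these I would invoke Theorem~\ref{thm:lci prop} directly and, when needed, reduce via the surjection $\Phi_X$ to the corresponding statements for $\Omega_*$ proved in \cite[Theorems 6.5.11--6.5.13]{LM}, mirroring the strategy used in Lemma~\ref{push-pull} and Definition~\ref{defn:lci pull-back}.
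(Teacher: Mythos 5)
Your proposal is correct and follows essentially the same route as the paper: the paper states this theorem precisely as a summary of the structural results of \S\ref{subsection :Fundamental-STC} and \S\ref{sec:lci} (localization, homotopy invariance, projective bundle formula, l.c.i.\ pull-backs with their compatibilities), with the universality obtained by the factoring argument through $\Phi_X$ via Proposition~\ref{prop:Alg-st}, exactly as in the proof of Proposition~\ref{prop:universal lev0}, which you reproduce at the level of oriented (co)homology theories using the universality of $\Omega_*$ from Levine--Morel. No gap.
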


\section{Connections to algebraic cobordism, Chow groups and $K$-theory}\label{section:connection to cycle and K}
In this section, we study how our cobordism theory $\Omega_* ^{\rm alg}(X)$ is related with the Chow groups $\CH^{\rm alg} _* (X)$ modulo algebraic equivalence and the semi-topological $K$-groups $K_0 ^{\rm semi} (X)$ and $G_0 ^{\rm semi}(X)$.  
We shall also show that our cobordism theory agrees with the algebraic 
cobordism theory with finite coefficients.

\subsection{Connection with Chow groups and $K$-theory}

\begin{theorem}\label{equivalence with cycle}\label{thm:Cob-Chow}
For $X \in \Sch_k$, there is a natural map $\Omega^{\rm alg}_*(X) \to \CH^{\rm alg}_*(X)$ that induces an isomorphism $\Omega^{\rm alg}_*(X) \otimes_{\mathbb{L}_*} \mathbb{Z} \xrightarrow{\simeq} \CH^{\rm alg}_*(X).$
\end{theorem}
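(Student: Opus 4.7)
The strategy is to first construct the map by descending the Levine--Morel cycle class map $\Omega_*(X) \to \CH_*(X)$, and then to compare the two theories via their fundamental exact sequences.

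\emph{Construction of the map.} The Levine--Morel cycle class map $\Phi^{\CH}_X \colon \Omega_*(X) \to \CH_*(X)$ (from \cite[Theorem 4.5.1]{LM}) sends a generator $[f\colon Y \to X, L_1, \ldots, L_r]$ to $f_*\bigl(c_1(L_1)\cdots c_1(L_r) \cap [Y]\bigr)$. I will compose this with the quotient map $\CH_*(X) \twoheadrightarrow \CH^{\rm alg}_*(X)$ and show the composite factors through $\Omega^{\rm alg}_*(X)$. By Proposition~\ref{prop:Alg-st}, $\ker(\Phi_X)$ is generated by elements of the form $[f\colon Y \to X, L]-[f\colon Y \to X, M]$ with $L \sim_{\rm alg} M$ on $Y$. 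Such an element maps to $f_*\bigl(c_1(L)\cap[Y] - c_1(M)\cap[Y]\bigr)$, which equals $f_*[D_L - D_M]$ for suitably chosen divisors $D_L, D_M$ representing $L$ and $M$. By Lemma~\ref{lem:trivial**}, $D_L \sim_{\rm alg} D_M$ as cycles on $Y$, so their difference is zero in $\CH^{\rm alg}_*(Y)$, hence also after push-forward. This produces the desired natural map $\Phi^{\rm alg}_X \colon \Omega^{\rm alg}_*(X) \to \CH^{\rm alg}_*(X)$.

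\emph{Proving the isomorphism after $\otimes_{\mathbb{L}_*}\mathbb{Z}$.} I will compare two right-exact sequences via the commutative diagram
\[
\xymatrix@C=1pc{
\displaystyle \bigoplus_{(C,t_1,t_2)} \Omega_*(X \times C) \otimes_{\mathbb{L}_*} \mathbb{Z} \ar[r]^{\ \ \ i_1^*-i_2^*} \ar[d]^{\simeq} &
\Omega_*(X) \otimes_{\mathbb{L}_*} \mathbb{Z} \ar[r] \ar[d]^{\simeq} &
\Omega^{\rm alg}_*(X) \otimes_{\mathbb{L}_*} \mathbb{Z} \ar[r] \ar[d] & 0 \\
\displaystyle \bigoplus_{(C,t_1,t_2)} \CH_*(X \times C) \ar[r]_{\ \ \ i_1^*-i_2^*} &
\CH_*(X) \ar[r] &
\CH^{\rm alg}_*(X) \ar[r] & 0.
}
\]
The top row is obtained by applying the right-exact functor $-\otimes_{\mathbb{L}_*}\mathbb{Z}$ to the fundamental exact sequence of Theorem~\ref{thm:FES-ST}. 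The two left vertical arrows are isomorphisms by \cite[Theorem~4.5.1]{LM}. The bottom row is the defining exact sequence for $\CH^{\rm alg}_*(X)$: an algebraic $d$-cycle $Z$ on $X$ is algebraically trivial exactly when $Z = i_1^*W - i_2^*W$ for some cycle $W$ on some $X \times C$ and points $t_1,t_2 \in C(k)$, where $i_j$ denotes the l.c.i.\ Gysin pull-back along $X \times\{t_j\} \hookrightarrow X \times C$. Once the left square commutes, the Five Lemma (applied to the right-exact rows) immediately gives that the right vertical arrow is an isomorphism.

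\emph{Main obstacle.} The one nontrivial point to verify is the commutativity of the left square, \emph{i.e.}\ the compatibility of the Levine--Morel cycle class map with the l.c.i.\ Gysin pull-backs $i_j^*$ attached to the regular embeddings $X\times\{t_j\} \hookrightarrow X \times C$. This compatibility follows because $\Phi^{\CH}$ is a morphism of oriented Borel--Moore homology theories on $\Sch_k$ in the sense of \cite[Definition~5.1.3]{LM}, and such morphisms commute with l.c.i.\ pull-backs by construction (see \cite[\S6.5]{LM}); the Tor-independence needed here is provided by Lemma~\ref{lem:Tor-ind}. Given this, the bottom row of the diagram is exact by the very definition of algebraic equivalence on $\CH_*$ (cf.\ \cite[Chapter~10]{Fulton}), and the theorem follows.
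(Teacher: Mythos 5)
Your proposal is correct and follows essentially the same route as the paper: both compare the fundamental right-exact sequence of Theorem~\ref{thm:FES-ST} (tensored over $\mathbb{L}_*$ with $\mathbb{Z}$) with the corresponding presentation of $\CH^{\rm alg}_*(X)$, use \cite[Theorem~4.5.1]{LM} for the two left vertical isomorphisms, and invoke the universality of $\Omega_*$ (compatibility of the cycle class map with the l.c.i.\ pull-backs $i_j^*$) for the commutativity of the left square. The only cosmetic difference is that you build the map $\Omega^{\rm alg}_*(X)\to\CH^{\rm alg}_*(X)$ by explicit descent using Proposition~\ref{prop:Alg-st} and Lemma~\ref{lem:trivial**}, whereas the paper obtains it directly as the induced map on cokernels of the same diagram; both are fine.
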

\begin{proof}
We consider the commutative diagram
\begin{equation}\label{eqn:Cob-G}
\xymatrix{
{\underset{(C,t_1, t_2)}\bigoplus} 
\Omega_*(X \times C) \ar[r]^>>>>>>{i^*_1 - i^*_2} \ar[d] & \Omega_*(X) 
\ar[r] \ar[d] & \Omega^{\rm alg}_*(X) \ar@{-->}[d] \ar[r] &
0 \\
{\underset{(C,t_1, t_2)}\bigoplus} 
\CH_*(X \times C) \ar[r]^>>>>{i^*_1 - i^*_2} & \CH_{*}(X) 
\ar[r] & \CH^{\rm alg}_{*}(X) \ar[r] & 0,}
\end{equation}
where the top row is exact by Theorem \ref{thm:FES-ST}. It follows from the definition of algebraic equivalence of algebraic cycles in \cite[Definition 10.3]{Fulton} and the proof of Lemma \ref{lem:trivial**} that the bottom row is also exact (see \cite[Example 10.3.2]{Fulton} when $k$ is algebraically closed). The existence of the first two vertical maps and their commutativity follow from the universal property of $\Omega_*$. This immediately yields a natural map $\Omega^{\rm alg}_*(X) \to \CH^{\rm alg}_*(X)$.

Moreover, the top row remains exact after applying the functor
$-\otimes_{\mathbb{L}_*}{\mathbb{Z}}$ and the first two vertical maps after tensoring are isomorphisms by \cite[Theorem 4.5.1]{LM}. Thus, the last vertical map after tensoring is also an isomorphism.
\end{proof}

\begin{remark}\label{univ Chow mod alg}By Theorems \ref{thm:universal}, \ref{equivalence with cycle}, and \cite[Theorem 1.2.2]{LM}, we see that $\CH^* _{\rm alg}$ is universal among oriented cohomology theories on $\Sm_k$ whose Chern class operations are additive, i.e., $\tilde{c}_1 (L_1 \otimes L_2) = \tilde{c}_1 (L_1) + \tilde{c}_1 (L_2)$ and respect algebraic equivalence.
\end{remark}

For $X \in \Sch_k$, let $K_0 (X)$ (resp. $G_0(X)$) be the Grothendieck group of coherent locally free sheaves (resp. coherent sheaves) on $X$. Recall from \cite[Definition 1.1]{FW} that the semi-topological $K$-group $K^{\rm semi}_0 (X)$ (resp. $G^{\rm semi}_0(X)$) is the quotient by the subgroup generated by the images of the l.c.i. pull-backs $ i_1 ^* - i_2 ^*\colon K_0 (X \times C)\to K_0 (X)$ (resp. $i_1 ^* - i_2 ^* \colon G_0 (X \times C) \to G_0 (X)$) over the equivalence classes of the triples $(C, t_1, t_2)$. When $X$ is smooth, we have 
$K_0 ^{\rm semi} (X) \xrightarrow{\cong} G_0 ^{\rm semi} (X)$. We have the following analogue of \cite[Corollary 4.2.12]{LM}.

\begin{theorem}\label{equivalence with K}\label{thm:Cob-K}
Let $X \in \Sch_k$ and let $\beta$ be a formal symbol of degree $-1$. Then, there is a natural map $\Omega_* ^{\rm alg}(X) \to G_0 ^{\rm semi}(X) [ \beta, \beta^{-1}]$ which induces an isomorphism $\Omega^{\rm alg}_*(X) \otimes_{\mathbb{L}_*} \mathbb{Z}[\beta, \beta^{-1}] \xrightarrow{\simeq} G_0 ^{\rm semi}(X) [\beta, \beta^{-1}].$
\end{theorem}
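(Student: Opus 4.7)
The plan is to mimic the proof of Theorem~\ref{thm:Cob-Chow}, replacing $\CH_*$ by $G_0[\beta,\beta^{-1}]$ and invoking \cite[Corollary~4.2.12]{LM} in place of \cite[Theorem~4.5.1]{LM}. First I would record the semi-topological analogue of the basic exact sequence: by the definition of $G_0^{\rm semi}$ recalled just before the theorem, there is a right-exact sequence
\[
{\underset{(C,t_1, t_2)}\bigoplus} G_0(X \times C)[\beta,\beta^{-1}] \xrightarrow{i^*_1 - i^*_2} G_0(X)[\beta,\beta^{-1}] \to G_0^{\rm semi}(X)[\beta,\beta^{-1}] \to 0,
\]
obtained from the usual defining sequence of $G_0^{\rm semi}$ by the exact functor $(-)[\beta,\beta^{-1}]$.

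Next I would assemble the commutative diagram, analogous to \eqref{eqn:Cob-G},
\[
\xymatrix{
{\underset{(C,t_1, t_2)}\bigoplus} \Omega_*(X \times C) \ar[r]^>>>>>>{i^*_1 - i^*_2} \ar[d] & \Omega_*(X) \ar[r]^{\Phi_X} \ar[d] & \Omega^{\rm alg}_*(X) \ar@{-->}[d] \ar[r] & 0 \\
{\underset{(C,t_1, t_2)}\bigoplus} G_0(X \times C)[\beta,\beta^{-1}] \ar[r]^>>>>{i^*_1 - i^*_2} & G_0(X)[\beta,\beta^{-1}] \ar[r] & G_0^{\rm semi}(X)[\beta,\beta^{-1}] \ar[r] & 0,
}
\]
whose top row is exact by Theorem~\ref{thm:FES-ST}. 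The first two vertical maps come from the universal property of $\Omega_*$ as an oriented Borel-Moore homology theory on $\Sch_k$: the functor $X \mapsto G_0(X)[\beta,\beta^{-1}]$ (with the multiplicative formal group law $F(u,v)=u+v-\beta uv$) is such a theory by \cite[Example~1.2.3 and Proposition~4.2.11]{LM}, so one has a canonical transformation $\Omega_* \to G_0[\beta,\beta^{-1}]$ of oriented Borel-Moore homology theories. In particular this transformation commutes with l.c.i.\ pull-backs, which gives the commutativity of the left square; the middle square is tautological. A diagram chase then produces the dashed vertical arrow and an induced map $\Omega_*^{\rm alg}(X) \to G_0^{\rm semi}(X)[\beta,\beta^{-1}]$.

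Finally, to obtain the isomorphism after tensoring, I would apply the right-exact functor $-\otimes_{\mathbb{L}_*} \mathbb{Z}[\beta,\beta^{-1}]$ to the whole diagram. The top row remains exact. By \cite[Corollary~4.2.12]{LM}, the first two vertical maps become isomorphisms
\[
\Omega_*(Y)\otimes_{\mathbb{L}_*}\mathbb{Z}[\beta,\beta^{-1}] \xrightarrow{\simeq} G_0(Y)[\beta,\beta^{-1}]
\]
for $Y = X$ and $Y = X \times C$. A five-lemma / standard diagram chase then forces the induced map $\Omega_*^{\rm alg}(X)\otimes_{\mathbb{L}_*}\mathbb{Z}[\beta,\beta^{-1}] \to G_0^{\rm semi}(X)[\beta,\beta^{-1}]$ to be an isomorphism. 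The main point requiring care, and the only real obstacle, is verifying that the Levine-Morel comparison $\Omega_* \to G_0[\beta,\beta^{-1}]$ is compatible with the l.c.i.\ pull-backs $i_j^*$ appearing in the two exact sequences; this is however built into the fact that it is a morphism of oriented Borel-Moore homology theories on $\Sch_k$, so no new work is needed.
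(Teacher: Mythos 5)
Your strategy coincides with the paper's: rerun the proof of Theorem~\ref{thm:Cob-Chow} with $\CH_*$ replaced by $G_0[\beta,\beta^{-1}]$, using the defining presentation of $G_0^{\rm semi}$, the basic exact sequence of Theorem~\ref{thm:FES-ST}, the universal property of $\Omega_*$ to get the vertical comparison maps, and then apply $-\otimes_{\mathbb{L}_*}\mathbb{Z}[\beta,\beta^{-1}]$ and chase the diagram. The construction of the dashed arrow and the final five-lemma step are fine as written.

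The gap is in the key input that the first two vertical arrows become isomorphisms after tensoring. You justify this for $Y=X$ and $Y=X\times C$ by \cite[Corollary~4.2.12]{LM} alone, but that result is proved for smooth schemes, while the theorem is asserted for arbitrary $X\in\Sch_k$; when $X$ is singular, so is $X\times C$, and the comparison $\Omega_*(Y)\otimes_{\mathbb{L}_*}\mathbb{Z}[\beta,\beta^{-1}]\xrightarrow{\simeq} G_0(Y)[\beta,\beta^{-1}]$ is then not covered by Levine--Morel. This is precisely why the paper supplements \cite[Corollary~4.2.12]{LM} with \cite[Theorem~1.5]{Dai}, which extends the isomorphism to singular schemes. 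With that reference added (or with the statement restricted to smooth $X$, in which case $X\times C$ is smooth too and your argument is complete), the proof closes. A minor secondary point: \cite[Example~1.2.3]{LM} only gives $K^0[\beta,\beta^{-1}]$ as an oriented cohomology theory on $\Sm_k$; to have the transformation $\Omega_*\to G_0[\beta,\beta^{-1}]$ on all of $\Sch_k$, compatible with the l.c.i.\ pull-backs $i_j^*$, you should cite the Borel--Moore statement for $G_0$ on $\Sch_k$ (again with \cite{Dai} in the singular case) rather than the cohomological one --- this is a matter of citing the correct statement, not a mathematical obstruction.
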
 
\begin{proof}
This follows from the definition of $G^{\rm semi}_0(X)$ above, Theorem \ref{thm:FES-ST}, together with \cite[Corollary 4.2.12]{LM} (if $X$ is smooth) and \cite[Theorem 1.5]{Dai} (if $X$ is not smooth) 
by repeating the arguments in the proof of Theorem \ref{thm:Cob-Chow} in verbatim. The only change is that we have to apply the functor
$ - \otimes_{\mathbb{L}_*} \mathbb{Z} [ \beta, \beta^{-1}]$ instead of 
$- \otimes_{\mathbb{L}_*} \mathbb{Z}$ to the exact sequence similar to that of ~\eqref{eqn:Cob-G}, where the bottom row consists of $G_0$ instead of $\CH$.
\end{proof}


\subsection{Comparison with algebraic cobordism with finite coefficients}\label{subsection:Finite-coeff} 
By \cite[Corollary 3.8]{FW1}, we know that with finite coefficients, the algebraic and the semi-topological $K$-theories of complex projective varieties coincide. The following is the cobordism analogue of this agreement.

\begin{theorem}\label{thm:FC}
Let $X \in \Sch_k$ and let $m \ge 1$ be an integer. Then the natural map $\Phi_X \otimes \mathbb{Z}/m\colon \Omega_*(X)\otimes_{\mathbb{Z}} {\mathbb{Z}}/m \to \Omega^{\rm alg}_*(X)  \otimes_{\mathbb{Z}} {\mathbb{Z}}/m$ is an isomorphism.
\end{theorem}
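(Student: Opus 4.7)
The plan is to apply $-\otimes_{\Z}\Z/m$ to the basic exact sequence of Theorem~\ref{thm:FES-ST}. Since tensoring with $\Z/m$ is right exact and $\Phi_X \otimes \Z/m$ is automatically surjective, the theorem amounts to showing that $\ker \Phi_X \subseteq m\,\Omega_*(X)$.

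By Proposition~\ref{prop:Alg-st}, $\ker \Phi_X$ is generated by cobordism cycles of the form $[f\colon Y \to X, L] - [f\colon Y \to X, M] = f_*\bigl((\tilde c_1(L) - \tilde c_1(M))(1_Y)\bigr)$ with $L \sim_{\rm alg} M$ on $Y$. Since projective push-forward preserves $m$-divisibility, it suffices to show that $(\tilde c_1(L) - \tilde c_1(M))(1_Y) \in m\,\Omega_{\dim Y - 1}(Y)$. Setting $N = L \otimes M^{-1}$ (algebraically trivial) and expanding via the formal group law
\begin{equation*}
\tilde c_1(L) - \tilde c_1(M) = \tilde c_1(N) + \sum_{i,j \ge 1} a_{i,j}\, \tilde c_1(N)^i\, \tilde c_1(M)^j,
\end{equation*}
one sees that every higher-order term carries a factor of $\tilde c_1(N)$. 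An induction on the Chern-class monomial length (bounded by $\dim Y$ via Definition~\ref{precobordism definition}(1)) thus reduces the theorem to the following:

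\textbf{Key Claim.} For every smooth $Y \in \Sm_k$ and every algebraically trivial line bundle $N$ on $Y$, the class $\tilde c_1(N)(1_Y) \in \Omega_{\dim Y - 1}(Y)$ is $m$-divisible.

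The main obstacle is the Key Claim, which is a genuinely cobordism-theoretic statement and does not follow from any naive Chow-theoretic divisibility. The approach is to exploit the defining datum $N \simeq \mathcal L|_{Y \times \{t_1\}}$, with $\mathcal L|_{Y \times \{t_2\}} \simeq \mathcal O_Y$ after appropriate normalization, for some line bundle $\mathcal L$ on $Y \times C$, together with the multiplication-by-$m$ isogeny $[m]\colon J(C) \to J(C)$ on the Jacobian of $C$. Pulling $\mathcal L$ back along a finite cover of $C$ constructed from $[m]$ (with any descent from a finite field extension handled by a norm/transfer argument) produces an auxiliary line bundle whose $m$-th tensor power is related to $\mathcal L$. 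The formal group law identity $c_1^\Omega(L^{\otimes m}) = [m]_F(c_1^\Omega(L)) = m\,c_1^\Omega(L) + O(c_1^\Omega(L)^2)$ then converts this $m$-th-power relation into $m$-divisibility of Chern classes modulo higher-order monomials; the $\mathbb{L}_{>0}$-shift implicit in these higher-order terms is precisely what permits the theorem to hold even where a naive Chow analog could fail. The delicate point is ensuring compatibility of the isogeny construction with the cobordism-theoretic $\tilde c_1$ operations and with the Gysin pull-backs of Section~\ref{section:OCT}, and carrying out the descent step without losing control of the relevant formal group law coefficients.
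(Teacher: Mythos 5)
Your reduction is sound as far as it goes: surjectivity mod $m$ is clear, and by Proposition~\ref{prop:Alg-st} together with the expansion $\tilde{c}_1(L)-\tilde{c}_1(M)=\tilde{c}_1(N)+\sum_{i,j\ge 1}a_{i,j}\tilde{c}_1(N)^i\tilde{c}_1(M)^j$ (the Chern operators commute and are $\mathbb{L}_*$-linear) everything does come down to your Key Claim. But the Key Claim is precisely the heart of the theorem (it is the generator $[Y\to Y,N]-[Y\to Y,\mathcal{O}_Y]$ of $\ker\Phi_Y$), and what you offer for it is a plan, not a proof, with two genuine gaps. First, from $N\simeq M^{\otimes m}$ you only get $\tilde{c}_1(N)=[m]_F(\tilde{c}_1(M))=m\,\tilde{c}_1(M)+\sum_{i\ge 2}b_i\,\tilde{c}_1(M)^i$, and the higher coefficients $b_i\in\mathbb{L}_{i-1}$ of the $m$-series are \emph{not} divisible by $m$ (already $[2]_F(u)=2u+a_{1,1}u^2+\cdots$ with $a_{1,1}$ a polynomial generator of $\mathbb{L}_*$); so ``$m$-divisibility modulo higher-order monomials'' concludes nothing by itself. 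One could try to repair this by a descending induction on the number of algebraically-trivial Chern factors (such monomials vanish once their number exceeds $\dim Y$), but you neither state nor carry out such an induction, and each step of it requires again extracting $m$-th roots. Second, and fatally for the descent step you flag: the $m$-th root $M$ (equivalently a preimage of $[t_1]-[t_2]$ under $[m]$ on $J(C)$) exists in general only after a finite extension $k'/k$ whose degree cannot be chosen prime to $m$ --- if it could, a transfer already at the level of $\CH_0$ would force $[t_1]-[t_2]$ to be $m$-divisible in $\Pic^0(C)(k)$, which fails, e.g., over $k=\mathbb{Q}$ --- and in $\Omega_*$ the identity $\pi_*\pi^*=[k':k]$ is false anyway (it holds only up to $\mathbb{L}_{>0}$-corrections, since $\pi_*(1)$ is not the degree). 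So the ``norm/transfer'' step cannot recover $m$-divisibility over $k$, and the ``delicate points'' you mention are real obstructions rather than technicalities.

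For comparison, the paper's proof avoids all of this machinery. It passes to the double-point model via \cite[Theorem 1]{LP} and Theorem~\ref{thm:comparison}; there the kernel of $\Psi_X$ is generated by the double-point relations, and using \eqref{adpc equation} each such relation equals $f_*\pi^*\left([\{\zeta\}\to C]-[\{p\}\to C]\right)$, a push-pull of a degree-zero $0$-cycle class on the parameterizing curve itself. Divisibility is then needed only in $\omega_0(C)\cong\CH_0(C)$, where it is the divisibility of $\Pic^0(C)(k)$ (valid, e.g., for algebraically closed $k$) --- no formal-group-law expansion, no isogeny on the Jacobian, and no descent. Note that your route would in any case need this same divisibility input on $C$ before any transfer, so the extra machinery buys nothing; if you want to salvage your approach, the efficient move is to use the curve datum to write $\tilde{c}_1(N)(1_Y)$ as $(i_1^*-i_2^*)$ of a class on $Y\times C$ and reduce to $0$-cycles on $C$ as the paper does, rather than to manipulate $m$-series on $\Pic^0$.
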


\begin{proof}
Using \cite[Theorem 1]{LP} and Theorem \ref{thm:comparison}, we can identify $\Omega_*(X)$ and $\Omega^{\rm alg}_*(X)$ with $\omega_*(X)$ and $\omega^{\rm alg}_*(X)$, respectively. In the diagram \eqref{eqn:FES*1}, it suffices to show that ${\rm Im}(\theta)$ in $\omega_*(X)$ is divisible.

Let $(g, p, \zeta)$ be a double-point cobordism with a projective $g \colon Y \to X \times C$, two points $p, \zeta \in C(k)$ and $\pi = pr_2 \circ g$ such that $\pi^{-1} (p) = A \cup B$ 
(see Definition \ref{double point degeneration}). 

Let $\alpha \colon= [Y_\zeta \to Y] - [A \to Y] - [B \to Y] + [\mathbb{P}(\pi) \to Y]$ in $\omega_* (Y)$. Set $f\colon= pr_1 \circ g\colon Y \to X$. Since $\partial_C(g, p, \zeta)= f_*( \alpha) $, it suffices to show that $\alpha$ is divisible in $\omega_*(Y)$. An application of \eqref{adpc equation} to the divisor $E\colon= A + B$ shows that $[A \to Y] + [B \to Y] - [\mathbb{P}(\pi) \to Y] = [E \to Y] =\pi^*([\{p\} \to C])$. We also have $[Y_\zeta \to Y] = \pi^*([\{\zeta\} \to C])$. Thus, $\alpha = \pi^*\left([\{\zeta\} \to C] - [\{p\} \to C]\right)$ and it reduces to proving that the class $\beta \colon= [\{\zeta\} \to C] - [\{p\} \to C]$ is divisible in $\omega_0(C)$. 

By \cite[Lemma 4.5.3]{LM}, the natural map $\omega_0(C) \to \CH_0(C)$ is an isomorphism and the image of $\beta$ in $\CH_0(C)$ is $[\{\zeta\}] - [\{p\}]$, which lies in $\Pic^0(C)$. Since $\Pic^0(C)$ is an abelian variety, the group $\Pic^0 (C)(k)$ is divisible. This completes the proof. 
\end{proof}

\section{Computations of $\Omega_{*} ^{\rm alg}$ and questions on finite generation}\label{sec:computation}
It is usually not easy to compute $\Omega_*$. For the point $X= \Spec (k)$, Levine and Morel \cite[Theorem 1.2.7]{LM} showed that the natural map $\mathbb{L} _*\to \Omega_*(k)$ is an isomorphism. In this section, we focus on some computational aspects of $\Omega_* ^{\rm alg}$.  

\subsection{Comparison with the complex cobordism}\label{subsection:STCC}
We refer to \cite{Quillen} or \cite{Totaro} for the definition and basic properties of the complex cobordism theory $\MU^*$ for locally compact Hausdorff topological spaces. We only mention here that $\MU^*(X)$ is generated by $[f\colon Y \to X]$, where $f$ is proper and $Y$ is a weakly complex real manifold under certain ``bordism relations''.

\begin{proposition}\label{top cycle class sm}\label{prop:STCC}
Given an embedding $\sigma \colon k \hookrightarrow \mathbb{C}$, there is a natural transformation $\theta^{\rm alg}  \colon \Omega_{\rm alg}^* \to \MU^{2*}$ of oriented cohomology theories on $\Sm_k$ that factors the cycle class map $\theta \colon \Omega ^* \to \MU^{2*}$. 
This $\theta^{\rm alg}$ is a lifting of the cycle class map of Totaro $\CH^*_{\rm alg} (X) \to \MU^{2*}(X) \otimes_{\mathbb{L}^*} \mathbb{Z}$ (see \cite[Theorem 3.1]{Totaro}).

\end{proposition}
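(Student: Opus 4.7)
The plan is to invoke the universality of $\Omega^*_{\rm alg}$ from Theorem~\ref{thm:universal}. The composite functor $X \mapsto \MU^{2*}(X(\mathbb{C}))$ on $\Sm_k$, obtained from the analytification associated to $\sigma$, is a well-known oriented cohomology theory, and Levine--Morel already produce the cycle class map $\theta \colon \Omega^* \to \MU^{2*}$ from the universality of $\Omega^*$. So the main content to verify is that $\MU^{2*}$, regarded as a theory on $\Sm_k$ via $\sigma$, \emph{respects algebraic equivalence} in the sense of Definition~\ref{def:respects alg equiv}. Once this is granted, Theorem~\ref{thm:universal} supplies a unique morphism $\theta^{\rm alg} \colon \Omega^*_{\rm alg} \to \MU^{2*}$ of oriented cohomology theories, and the factorization $\theta = \theta^{\rm alg} \circ \Phi$ through the canonical surjection $\Phi \colon \Omega^* \twoheadrightarrow \Omega^*_{\rm alg}$ is forced by the uniqueness clause applied to $\Omega^*$.

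To verify that algebraic equivalence is respected, take $X \in \Sm_k$ and two algebraically equivalent line bundles $L, M$ on $X$. By definition there is a smooth projective connected curve $C$, two $k$-rational points $t_1, t_2 \in C(k)$, and a line bundle $\mathcal{L}$ on $X \times C$ with $\mathcal{L}|_{X \times \{t_j\}} \simeq L, M$ for $j = 1, 2$. Analytifying via $\sigma$ we obtain a topological complex line bundle $\mathcal{L}^{\rm an}$ on $X(\mathbb{C}) \times C(\mathbb{C})$ with complex first Chern class $c_1(\mathcal{L}^{\rm an}) \in \MU^2\bigl(X(\mathbb{C}) \times C(\mathbb{C})\bigr)$. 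Since $C(\mathbb{C})$ is a nonempty connected (hence path-connected) complex manifold, the two sections $x \mapsto (x,t_1)$ and $x \mapsto (x,t_2)$ of the projection $X(\mathbb{C}) \times C(\mathbb{C}) \to X(\mathbb{C})$ are homotopic, so by the homotopy invariance of $\MU^*$ we have $c_1(L) = c_1(M)$ in $\MU^2(X(\mathbb{C}))$. As the Chern class operator $\tilde c_1$ acts by multiplication by $c_1$ in $\MU^{2*}$, this gives $\tilde c_1(L) = \tilde c_1(M)$ as operators on $\MU^{2*}(X(\mathbb{C}))$, which is exactly Definition~\ref{def:respects alg equiv}.

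For the assertion that $\theta^{\rm alg}$ lifts Totaro's cycle class map, apply the functor $- \otimes_{\mathbb{L}^*} \mathbb{Z}$ to the factorization $\theta = \theta^{\rm alg} \circ \Phi_X$. Using Theorem~\ref{equivalence with cycle} to identify $\Omega^*_{\rm alg}(X) \otimes_{\mathbb{L}^*} \mathbb{Z} \simeq \CH^*_{\rm alg}(X)$ and using \cite[Theorem~4.5.1]{LM} to identify $\Omega^*(X) \otimes_{\mathbb{L}^*} \mathbb{Z} \simeq \CH^*(X)$, we obtain a commutative diagram
\[
\xymatrix{
\CH^*(X) \ar@{->>}[r] \ar[dr] & \CH^*_{\rm alg}(X) \ar[d] \\
& \MU^{2*}(X(\mathbb{C})) \otimes_{\mathbb{L}^*} \mathbb{Z},
}
\]
whose diagonal arrow is Totaro's map \cite[Theorem~3.1]{Totaro}. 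Thus the right vertical arrow, which is induced from $\theta^{\rm alg}$, is precisely the unique factorization of Totaro's map through $\CH^*_{\rm alg}(X)$ (this factorization exists for the trivial reason that the diagonal map clearly kills algebraically trivial cycles, as already re-proven above at the level of Chern classes).

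The main obstacle is the first step, namely that the topological first Chern class respects algebraic equivalence of line bundles. Once this is in place the remainder of the proposition is a formal consequence of the universal properties of $\Omega^*_{\rm alg}$ and $\CH^*_{\rm alg}$ and the comparison results already established.
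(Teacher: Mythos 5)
Your proposal is correct and follows essentially the same route as the paper: reduce via the universality of $\Omega^*_{\rm alg}$ (Theorem~\ref{thm:universal}) to checking that $\MU^{2*}$ respects algebraic equivalence, and verify this by noting that the two inclusions $X(\mathbb{C})\times\{t_1\}, X(\mathbb{C})\times\{t_2\} \hookrightarrow X(\mathbb{C})\times C(\mathbb{C})$ are homotopic (the paper realizes this homotopy by restricting to a path $I\subset C(\mathbb{C})$ joining $t_1$ and $t_2$ and using contractibility of $I$, which is the same argument), with the Totaro lifting then following formally from Theorem~\ref{thm:Cob-Chow}.
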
 

\begin{proof}
From \cite[Example 1.2.10]{LM}, we have a morphism $\theta \colon  \Omega ^* \to \MU^{2*}$ of oriented cohomology theories on $\Sm_k$. Hence by Theorem \ref{thm:universal}, it suffices to show that for any $X \in \Sm_k$ and algebraically equivalent line bundles $L_1$ and $L_2$ on $X$, one has $\tilde{c}_1(L_1) = \tilde{c}_1(L_2) \colon \MU^*(X_\sigma) \to \MU^{*+2}(X_\sigma)$. We can assume $k = \mathbb{C}$.

Let $\mathcal{L}$ be a line bundle on $X \times C$ for some compact Riemann surface $C$ such that for some points $t_1, t_2 \in C$, we have $L_j = \mathcal{L}|_{X \times\{ t_j\}}$ for $j = 1,2$. Let $i_j \colon X \times \{t_j \}\to X \times C$ be the inclusions. Take any differentiable path $I$ in $C$, diffeomorphic to the unit interval $[0,1]$, whose end points are $t_1$ and $t_2$. Let $\alpha \colon X \times I \to X \times C$ and $\iota_j \colon X \times \{t_j\} \to X \times I$ be the inclusions. Note that $\alpha \circ \iota_j = i_j$ for $j=1, 2$.

Since $X$ is smooth, we have $ \tilde{c}_1(L_j)([Y \to X]) =  \left(\tilde{c}_1(L_j)(1_X)\right) \cdot [Y \to X] = c_1(L_j) \cdot [Y \to X]$, where the first equality comes from \cite[(5.2)-5]{LM}. On the other hand, we have $c_1(L_j) = i^*_j(c_1(\mathcal{L})) = \iota_j^*\alpha^*(c_1(\mathcal{L}))$. The desired assertion now follows from the fact that $\iota^*_j \colon \MU^*(X \times I) \to \MU^*(X)$ is an isomorphism for $j = 1,2$ because $I$ is contractible.
The second assertion follows from the first assertion and
Theorem~\ref{thm:Cob-Chow}. \end{proof}


\subsection{Points}\label{point case}
\begin{proposition}\label{prop:Cob-point}
The map $\mathbb{L}^* \to \Omega^*_{\rm alg}(k)$ is an isomorphism.
\end{proposition}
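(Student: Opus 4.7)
The plan is to split the proof into surjectivity and injectivity, with only the second requiring work. For surjectivity, the map in question factors as $\mathbb{L}^* \xrightarrow{\cong} \Omega^*(k) \xrightarrow{\Phi_k} \Omega^*_{\rm alg}(k)$, where the first arrow is Levine--Morel's isomorphism from \cite[Theorem 1.2.7]{LM} and the second is the natural surjection $\Phi_k$ constructed in Section \ref{adq cobordism}. The composite is therefore surjective, and the remaining task is to show $\Phi^{\rm alg}$ is injective.

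The strategy for injectivity is to exhibit a left inverse via the complex-cobordism realization. Assume first that $k$ admits an embedding $k \hookrightarrow \mathbb{C}$. Then Proposition \ref{prop:STCC} provides a cycle class map $\theta^{\rm alg} \colon \Omega^*_{\rm alg} \to \MU^{2*}$ of oriented cohomology theories on $\Sm_k$, and Quillen's theorem identifies $\MU^{2*}(\Spec \mathbb{C})$ with $\mathbb{L}^*$ as graded rings via the unit map. Since both $\Phi^{\rm alg}$ and $\theta^{\rm alg}$ on $\Spec k$ are unit-preserving graded ring homomorphisms, the composite
\[
\mathbb{L}^* \xrightarrow{\Phi^{\rm alg}} \Omega^*_{\rm alg}(\Spec k) \xrightarrow{\theta^{\rm alg}} \MU^{2*}(\Spec \mathbb{C}) \cong \mathbb{L}^*
\]
must coincide with the universal unit map $\mathbb{L}^* \to \mathbb{L}^*$, which is the identity. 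In particular, $\Phi^{\rm alg}$ is split injective in this case.

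For a general characteristic-zero field $k$, I would apply a Lefschetz-style reduction. Given $x \in \mathbb{L}^*$ with $\Phi^{\rm alg}(x) = 0$, Theorem \ref{thm:FES-ST} allows us to write (under the identification $\mathbb{L}^* \cong \Omega^*(k)$) $x = \sum_s (i_{1,s}^* - i_{2,s}^*)(\beta_s)$ for finitely many smooth projective curves $(C_s, t_1^s, t_2^s)$ and $\beta_s \in \Omega_*(C_s)$. All underlying varieties, morphisms and line bundles appearing in this finite sum descend to some finitely generated subfield $k_0 \subset k$, so the same identity holds already in $\Omega_*(k_0)$, giving $\Phi^{\rm alg}_{k_0}(x) = 0$. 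Since any finitely generated characteristic-zero field embeds into $\mathbb{C}$, the preceding paragraph forces $x = 0$. The main technical subtlety is verifying that the displayed composite above is really the identity, which comes down to recognizing the three appearances of $\mathbb{L}^*$ as sharing a common universal unit map; everything else is a routine application of the realization and descent to a finitely generated subfield.
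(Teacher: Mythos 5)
Your surjectivity step is exactly the paper's. For injectivity, your route (complex realization plus Quillen's theorem, then spreading out to reduce an arbitrary characteristic-zero field to a finitely generated subfield of $\mathbb{C}$) is genuinely different from the paper's, and as written it has two real gaps. First, the assertion that the composite $\mathbb{L}^* \to \Omega^*_{\rm alg}(k) \to \MU^{2*}(\mathrm{pt}) \cong \mathbb{L}^*$ must be the identity ``since both maps are unit-preserving graded ring homomorphisms'' is not a valid deduction: the Lazard ring has many unit-preserving graded ring endomorphisms besides the identity (any formal group law over $\mathbb{L}^*$ classifies one). What actually makes the argument work is that $\theta^{\rm alg}$ from Proposition \ref{prop:STCC} is a morphism of \emph{oriented} cohomology theories, hence compatible with first Chern classes and so with the formal group laws; therefore the composite is the map classifying $F_{\MU}$, and Quillen's theorem says precisely that this classifying map is an isomorphism (it becomes the identity only after you identify $\MU^{2*}(\mathrm{pt})$ with $\mathbb{L}^*$ via that very map). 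The conclusion is correct, but this substitute argument has to be made, not waved at. Second, the step ``the same identity holds already in $\Omega_*(k_0)$'' is unsupported. To descend the relation $x=\sum_s(i^*_{1,s}-i^*_{2,s})(\beta_s)$ from Theorem \ref{thm:FES-ST} you need either a continuity statement $\Omega_*(k)\cong\colim_{k_0}\Omega_*(k_0)$, or at least base-change homomorphisms $\Omega_*(Y_{k_0})\to\Omega_*(Y_k)$ along the (non-finite-type) extension $k/k_0$ that are compatible with the Gysin pullbacks $i_j^*$ and with the identifications $\Omega_*(k_0)\cong\mathbb{L}_*\cong\Omega_*(k)$. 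Nothing of this kind is set up in the paper or in the portions of \cite{LM} it invokes (pullbacks there exist only for morphisms in $\Sch_k$, and lci pullbacks are built by deformation to the normal cone, whose compatibility with such base change is exactly what you would have to check). The claim is believable and standard in spirit, but it is additional work, not a one-line observation.

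For comparison, the paper's proof is purely algebraic and avoids both issues: by Proposition \ref{prop:Alg-st}, the kernel of $\Omega^*(k)\to\Omega^*_{\rm alg}(k)$ is generated by differences $[Y\to\Spec(k),L]-[Y\to\Spec(k),M]$ with $L\sim M$; rationally, by \cite{LP} one may replace $Y$ by a product of projective spaces, on which algebraically equivalent line bundles are already isomorphic, so these generators vanish in $\Omega^*(k)_{\mathbb{Q}}$; hence $\mathbb{L}^*_{\mathbb{Q}}\to\Omega^*_{\rm alg}(k)_{\mathbb{Q}}$ is injective, and torsion-freeness of $\mathbb{L}^*$ gives integral injectivity. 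That argument works uniformly over every characteristic-zero field, with no transcendental input and no reduction to subfields of $\mathbb{C}$; if you want to keep your realization-based approach, you must fill in the two points above.
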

\begin{proof}Composing the isomorphism $\mathbb{L}^* \overset{\simeq}{\to} \Omega^* (k)$ with the surjection $\Omega^* (k) \to \Omega^* _{\rm alg} (k)$, we see that the map $\mathbb{L}^* \to \Omega^* _{\rm alg} (k)$ is surjective. We prove injectivity.

We first prove the injectivity of the map $\mathbb{L}^* \otimes _{\mathbb{Z}} \mathbb{Q} \to \Omega^* _{\rm alg} (k) \otimes_{\mathbb{Z}}{\mathbb{Q}}$ with the rational coefficients. Applying Proposition \ref{prop:Alg-st}, we see that $\ker (\Omega^*(k) \to \Omega^*_{\rm alg}(k))$ is generated by the cobordism cycles of the form $\alpha = [Y \to \Spec(k), L]- [Y \to \Spec(k), M]$, where $L \sim M$ on $Y$. Since we are working with the rational coefficients, we can use \cite[Theorem 1, Corollary 3]{LP} to assume that $Y$ is a product of projective spaces. But on such spaces, two lines bundles are algebraically equivalent if and only if they are isomorphic. In particular, $\alpha$ is zero already in $\Omega^*(k)\otimes_{\mathbb{Z}} \mathbb{Q}$. Thus, the map $\mathbb{L} ^*\otimes _{\mathbb{Z}} \mathbb{Q} \to \Omega^* _{\rm alg} (k) \otimes_{\mathbb{Z}}{\mathbb{Q}}$ is injective. The injectivity of $\mathbb{L} ^*\to \Omega^* _{\rm alg} (k)$ now follows because $\mathbb{L}^*$ has no torsion.
\end{proof}

Recall from \cite[Definition 4.4.1]{LM} that an oriented Borel-Moore homology theory $A_*$ on $\Sch_k$ is said to be \emph{generically constant}, if for each finitely generated field extension $k \subset F$, the canonical morphism $A_*(k) \to A_*(F/k)$ is an isomorphism. Here $A_*(F/k)$ is the colimit of $A_{*+ \tr_{F/k}}(X)$ over models $X$ for $F$ over $k$ and $\tr_{F/k}$ is the transcendence degree of $F$ over $k$. Recall that a model for $F$ over $k$ is an integral scheme $X \in \Sch_k$ whose function field is isomorphic to $F$.

\begin{proposition}\label{prop:Gen-const}
The cobordism theory $\Omega^{\rm alg}_*$ is generically constant.
\end{proposition}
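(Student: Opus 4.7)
The strategy is to reduce to the known generic constancy of $\Omega_*$ from \cite[\S~4]{LM} via the basic exact sequence of Theorem~\ref{thm:FES-ST}. Consider the commutative square
\[
\begin{CD}
\Omega_*(k) @>{\sim}>> \Omega_*(F/k)\\
@V{\cong}VV @VV{\Psi}V\\
\Omega^{\rm alg}_*(k) @>>{\phi}> \Omega^{\rm alg}_*(F/k),
\end{CD}
\]
in which the top arrow is an isomorphism by the generic constancy of $\Omega_*$, the left vertical is an isomorphism by Proposition~\ref{prop:Cob-point}, and the right vertical $\Psi$ is the filtered colimit of the surjections $\Phi_X\colon \Omega_*(X)\twoheadrightarrow \Omega^{\rm alg}_*(X)$, hence surjective. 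Consequently $\phi$ is surjective; it remains to show that $\Psi$ is injective.

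By Hironaka's resolution of singularities ($\Char k=0$), smooth models of $F/k$ are cofinal, so we may restrict the colimit to smooth models $X$. Since filtered colimits are exact and commute with direct sums, Theorem~\ref{thm:FES-ST} yields the exact sequence
\[
\bigoplus_{(C,t_1,t_2)}\colim_{X}\Omega_{*+d+1}(X\times C)\xrightarrow{i_1^*-i_2^*}\Omega_*(F/k)\xrightarrow{\Psi}\Omega^{\rm alg}_*(F/k)\to 0,
\]
where $d=\tr_{F/k}$. Thus it suffices to show that this leftmost map vanishes for each triple $(C,t_1,t_2)$.

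For a fixed such triple, the smooth models of $F(C)/k$ of the form $X\times C$ are cofinal among smooth models, by Hironaka-type arguments comparing birational modifications of products, so
\[
\colim_X\Omega_{*+d+1}(X\times C)\;\cong\;\Omega_*(F(C)/k)\;\cong\;\mathbb{L}_*,
\]
the latter being generic constancy of $\Omega_*$ applied to $F(C)/k$. Under this composite, every $[Z]\in\mathbb{L}_*$ corresponds to the external product class $[Z\times(X\times C)\to X\times C]$ (for $Z$ smooth projective over $k$), at least after passing to a sufficiently small model. Hence given $\tilde\beta\in\Omega_{*+d+1}(X\times C)$, after shrinking $X$ to a suitable open $X'\subset X$ we have the identity $\tilde\beta|_{X'\times C}=[Z\times(X'\times C)\to X'\times C]$ in $\Omega_{*+d+1}(X'\times C)$ for some smooth projective $Z/k$. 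By the compatibility of the Gysin pull-back with external products (Theorem~\ref{thm:lci prop}(3)),
\[
(i_1^*-i_2^*)\bigl([Z\times(X'\times C)\to X'\times C]\bigr)=[Z\times X'\to X']-[Z\times X'\to X']=0
\]
in $\Omega_{*+d}(X')$, and \emph{a fortiori} in $\Omega_*(F/k)$. Thus the leftmost map vanishes and $\Psi$ is injective.

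The main thing to verify carefully is the cofinality of the products $X\times C$ in smooth models of $F(C)/k$ and the resulting explicit form of the generic-constancy isomorphism as an external product with the fundamental class $1_{X\times C}$; once these are in hand, the Gysin computation on such ``constant'' classes reduces the argument to the trivial identity above.
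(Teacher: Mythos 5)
Your overall framework is fine: taking the filtered colimit of the exact sequences of Theorem \ref{thm:FES-ST} over the models (equivalently, over shrinking opens of a fixed model) is legitimate, and your surjectivity argument for $\phi$ is essentially the paper's. The gap is in the key vanishing step. Opens of the form $X'\times C$ are \emph{not} cofinal among the opens of a model of $F(C)$: the complement of any horizontal divisor $Z\subset X\times C$ (one dominating $X$) contains no subset of the form $X'\times C$, so $\colim_{X}\Omega_{*+d+1}(X\times C)$ is not $\Omega_*(F(C)/k)$ and in particular is not $\mathbb{L}_*$. This is not a technicality but the heart of the matter: you are only allowed to shrink in the $X$-direction, because you must keep the fibres over $t_1,t_2$ in order to apply $i_1^*-i_2^*$, whereas generic constancy of $\Omega_*$ for $F(C)$ only makes a class ``constant'' after deleting a closed subset which will in general meet $X\times\{t_1\}$ and $X\times\{t_2\}$. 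What your colimit actually computes (at best, and only after a continuity argument that is not in the paper) is $\Omega_{*+1}(C_F)$ for the curve $C_F=C\times_k\Spec(F)$ over the large field $F$; this is far from $\mathbb{L}_*$ (already $\Omega_*(C_F)\otimes_{\mathbb{L}_*}\mathbb{Z}=\CH_*(C_F)$ contains $\Pic(C_F)$), and its classes --- sections, multisections, classes $[W\to X'\times C]$ with $W$ dominating both factors --- never become external products $[Z\times(X'\times C)\to X'\times C]$ after shrinking $X$. So the asserted vanishing of $i_1^*-i_2^*$ in the colimit does not follow from your argument; indeed that vanishing is essentially equivalent to the proposition itself.

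A correct way to finish along your lines would be: after identifying the colimit with $\Omega_{*+1}(C_F)$, note that the image of $i_1^*-i_2^*\colon\Omega_{*+1}(C_F)\to\Omega_*(F)$ lies in $\ker\left(\Omega_*(F)\to\Omega^{\rm alg}_*(F)\right)$ by Theorem \ref{thm:FES-ST} applied over the base field $F$ (still of characteristic zero), and this kernel is zero by Proposition \ref{prop:Cob-point} over $F$. But at that point you have essentially reproduced the paper's much shorter proof, which never computes the colimit of the correction terms: it shows $\eta^{\rm alg}_F$ is surjective (as you do, via the colimit of the surjections $\Phi_X$, i.e.\ Proposition \ref{prop:Alg-st}), and then observes that the composite $\Omega^{\rm alg}_*(k)\to\Omega^{\rm alg}_*(F/k)\to\Omega^{\rm alg}_*(F)$ is an isomorphism by Proposition \ref{prop:Cob-point} applied over both $k$ and $F$, which forces $\eta^{\rm alg}_F$ to be injective as well. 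I recommend either adopting that retraction argument or, if you want to keep your exact-sequence route, supplying the continuity identification of the colimit with $\Omega_{*+1}(C_F)$ and replacing the false cofinality/external-product claims by the base-change argument above.
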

\begin{proof}
Let $\mathcal{C}$ denote the category of models for $F$ over $k$. Then, we have a commutative diagram
\[
\xymatrix{
\Omega_*(k) \ar[r]^{\simeq} \ar[d]_{\eta_F} ^{\simeq}  & \Omega^{\rm alg}_*(k) \ar[d]_{\eta^{\rm alg}_F} \ar[dr]^{\simeq} & \\
{\underset{X \in \mathcal{C}}\colim} \ \Omega_{*+ \tr_{F/k}}(X) \ar[r] & {\underset{X \in \mathcal{C}}\colim} \ \Omega^{\rm alg}_{*+ \tr_{F/k}}(X) \ar[r] & \Omega^{\rm alg}_*(F).}
\]

We need to show that $\eta^{\rm alg}_F$ is an isomorphism. It follows from \cite[Corollary 4.4.3]{LM} that $\eta_F$ is an isomorphism. 
Applying Proposition \ref{prop:Alg-st} to the first horizontal arrow on the bottom, we see that $\eta^{\rm alg}_F$ is surjective. On the other hand, it follows from Proposition \ref{prop:Cob-point} that the slanted downward arrow is an isomorphism. This in turn implies that $\eta^{\rm alg}_F$ must also be injective, and hence an isomorphism.
\end{proof}

Recall from \cite[Example 1.9.1]{Fulton} that a scheme $X \in \Sch_k$ is called \emph{cellular} if it has a filtration $\emptyset = X_{n+1} \subsetneq X_n \subsetneq \cdots \subsetneq X_1 \subsetneq X_0 = X$ by closed subschemes such that each $X_i \backslash X_{i+1}$
is a disjoint union of affine spaces, called \emph{cells}. Basic examples include projective spaces, smooth projective toric varieties, and schemes of type $G/P$, where $P$ is a parabolic subgroup of a split reductive group $G$. As a consequence of Proposition \ref{prop:Cob-point} and a general result of A. Nenashev and K. Zainoulline \cite[Theorem 5.9]{NZ}, we can easily compute our cobordism theory for cellular schemes:

\begin{proposition}\label{prop:Cell}
For a cellular scheme $X \in \Sch_k$, the natural map $\Phi_X\colon \Omega_*(X)  \to \Omega^{\rm alg}_*(X)$ is an isomorphism. Each of these groups is a free 
$\mathbb{L}_*$-module of rank equal to the number of cells. 
\end{proposition}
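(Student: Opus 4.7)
The plan is to deduce the proposition from the theorem of Nenashev and Zainoulline, \cite[Theorem 5.9]{NZ}, which asserts that for any oriented Borel--Moore homology theory $A_*$ on $\Sch_k$ and any cellular scheme $X$ with filtration $X = X_0 \supsetneq X_1 \supsetneq \cdots \supsetneq X_{n+1} = \emptyset$ and cells $X_i \setminus X_{i+1} = \coprod_\alpha U_i^\alpha$, the group $A_*(X)$ is a free $A_*(k)$-module whose basis consists of push-forward classes of (resolutions of) the closures $\overline{U_i^\alpha}$ in $X$, one class per cell.

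First I would verify the hypotheses for both cobordism theories. By \cite[Theorem 1.2.7]{LM}, $\Omega_*(k) \simeq \mathbb{L}_*$, while Proposition~\ref{prop:Cob-point} gives $\Omega^{\rm alg}_*(k) \simeq \mathbb{L}_*$. By Theorem~\ref{thm:universal}, $\Omega^{\rm alg}_*$ is an oriented Borel--Moore homology theory on $\Sch_k$, and $\Omega_*$ is one by the main theorem of \cite{LM}. Applying \cite[Theorem 5.9]{NZ} to each theory at once yields that both $\Omega_*(X)$ and $\Omega^{\rm alg}_*(X)$ are free $\mathbb{L}_*$-modules of rank equal to the number of cells, establishing the second assertion of the proposition.

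Next, to show that the surjection $\Phi_X\colon \Omega_*(X) \to \Omega^{\rm alg}_*(X)$ is in fact an isomorphism, I would use the fact that the Nenashev--Zainoulline basis classes are constructed universally in any oriented Borel--Moore homology theory as iterated projective push-forwards of the fundamental classes $1_{\widetilde{Y}}$ attached to smooth schemes $\widetilde{Y}$. Because $\Phi_X$ is a morphism of oriented Borel--Moore homology theories (it commutes with projective push-forwards by Proposition~\ref{operations for sim cobordism} and sends the canonical generator $1_{\widetilde{Y}}$ to its counterpart, cf. Lemma~\ref{lem:surj}), it carries the Nenashev--Zainoulline basis of $\Omega_*(X)$ bijectively onto the analogous basis of $\Omega^{\rm alg}_*(X)$. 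Any $\mathbb{L}_*$-linear map of free modules which sends a basis to a basis is an isomorphism, and this finishes the argument.

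The main obstacle I anticipate is a purely bookkeeping one: ensuring that the basis produced by \cite[Theorem 5.9]{NZ} is genuinely natural with respect to morphisms of oriented Borel--Moore homology theories in the sense required above. Should this require extra care, the fallback is to argue directly by induction on the length of the cellular filtration, applying the localization sequence of Theorem~\ref{localization for sim-cobordism} to the closed pair $X_{i+1} \subset X_i$ and using $\mathbb{A}^1$-homotopy invariance (Theorem~\ref{extended homotopy sim}) together with Proposition~\ref{prop:Cob-point} to identify $\Omega^{\rm alg}_*(X_i \setminus X_{i+1})$ with a free $\mathbb{L}_*$-module of the correct rank, and then comparing with the analogous sequence for $\Omega_*$ via $\Phi$ and a five-lemma argument to propagate both freeness and the isomorphism through the induction.
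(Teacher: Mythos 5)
Your argument is essentially the paper's own proof: the paper likewise deduces the statement from Proposition \ref{prop:Cob-point} together with \cite[Theorem 5.9]{NZ}, with the basis classes matched under $\Phi_X$ because they are push-forwards of fundamental classes. Even your fallback (induction on the filtration via Theorem \ref{thm:localization**} and homotopy invariance) is exactly the alternative argument indicated in the remark following the proposition.
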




\begin{remark}One may also directly prove Proposition \ref{prop:Cell} by an induction argument on the length of a filtration of $X$ using Theorem \ref{thm:localization**} and \cite[Proposition 4.3]{Krishna}. If there is an embedding $\sigma \colon k \hookrightarrow \mathbb{C}$, Proposition \ref{prop:Cell} can be also deduced from Proposition \ref{prop:Alg-st}, Proposition \ref{top cycle class sm} and \cite[Theorem 6.1]{HK}.\end{remark}

\subsection{Curves}\label{curve case}\label{subsection:curve-case}
We next compute the cobordism theory $\Omega^*_{\rm alg}(X)$ of a smooth curve $X$. We show that this is a finitely generated $\mathbb{L}^*$-module. This is usually false for the algebraic cobordism $\Omega^*(X)$ unless $X$ is rational. If $k=\mathbb{C}$,  we show that $\Omega^*_{\rm alg}(X)$ is closely related to the complex cobordism $\MU^*(X(\mathbb{C}))$. 

\begin{theorem}\label{thm:curve}Let $X$ be a connected smooth curve over a field $k$. Then, 
\begin{enumerate}
\item The $\mathbb{L}^*$-module $\Omega^* _{\rm alg} (X)$ is generated by at most $2$ elements.
\item  When $X$ is affine, the map $\mathbb{L}^* \to \Omega^*_{\rm alg}(X)$ is an isomorphism.
\item  When $k= \mathbb{C}$, 
the map $\Omega ^* _{\rm alg} (X) \to \MU^{2*} (X(\mathbb{C}))$ is an isomorphism.
\end{enumerate}
\end{theorem}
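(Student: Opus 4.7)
My plan is to prove the three parts in order, using Theorem \ref{thm:comparison} throughout to represent classes by cobordism cycles $[f \colon Y \to X]$ with $Y$ smooth projective, as in the definition of $\omega^{\rm alg}_*$.

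For part (1), I would apply the generalized degree formula \cite[Theorem 4.4.7]{LM} to write any such cycle as $\deg(f) \cdot 1_X + \sum_i \omega_i \cdot [Z_i \to X]$ with $Z_i \subsetneq X$ proper closed subvarieties and $\omega_i \in \mathbb{L}_*$. Since $\dim X = 1$, each $Z_i$ is a finite set of closed points, so modulo $\mathbb{L}^* \cdot 1_X$ the group $\Omega^*_{\rm alg}(X)$ is generated by the point classes $[z \to X]$. To collapse these to a single class, I would embed $X \hookrightarrow \bar X$ into its smooth projective completion. For any two closed points $P_1, P_2 \in \bar X$ of residue degrees $d_1, d_2$, the effective divisors $d_2 P_1$ and $d_1 P_2$ have the same degree, hence are algebraically equivalent on $\bar X$; Lemma \ref{lem:trivial**} gives $O_{\bar X}(d_2 P_1) \sim_{\rm alg} O_{\bar X}(d_1 P_2)$, and the relation (Equiv) combined with expansion via the formal group law and the vanishing $\tilde{c}_1(L)^2(1_{\bar X}) = 0$ (from (Dim), since $\dim \bar X = 1$) yields $d_2 [P_1 \to \bar X] = d_1 [P_2 \to \bar X]$ in $\Omega^*_{\rm alg}(\bar X)$. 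Pulling back along the open immersion then shows $\{1_X, [x \to X]\}$ generates $\Omega^*_{\rm alg}(X)$ over $\mathbb{L}^*$.

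For part (2), assume $X$ is affine, and observe that it suffices to prove $[x \to X] = 0$ in $\Omega^*_{\rm alg}(X)$: by (1) this gives surjectivity of $p^* \colon \mathbb{L}^* \to \Omega^*_{\rm alg}(X)$, and injectivity follows because the Gysin pullback $i^* \colon \Omega^*_{\rm alg}(X) \to \Omega^*_{\rm alg}(x) = \mathbb{L}^*$ along any closed point $i \colon x \hookrightarrow X$ splits $p^*$, since $i^* \circ p^* = (p \circ i)^*$ is an isomorphism by generic constancy (Proposition \ref{prop:Gen-const}). By (Sect), $[x \to X] = \tilde{c}_1(O_X(x))(1_X)$, so by (Equiv) and (Dim) it suffices to show $O_X(x)$ is algebraically equivalent to the trivial bundle. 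Over an algebraically closed base field this is clean: the exact sequence $\mathbb{Z}^{|Z|} \to \Pic(\bar X) \to \Pic(X) \to 0$ for $Z = \bar X \setminus X$ together with the fact that each boundary point has degree $1$ collapses the degree map and forces the algebraic equivalence class of $O_X(x)$ to vanish. In general one supplements this with the $\mathbb{A}^1$-invariance of Theorem \ref{thm:H-Invariance}: since $[t \to \mathbb{A}^1] = 0$ in $\Omega^*_{\rm alg}(\mathbb{A}^1) \cong \mathbb{L}^*$, pulling back along any non-constant finite morphism $f \colon X \to \mathbb{A}^1$ produces the extra relation $[f^{-1}(t) \to X] = 0$, which combined with the relations from (1) allows one to conclude.

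For part (3), every smooth connected curve over $\mathbb{C}$ is either affine or projective. If $X$ is affine, $X(\mathbb{C})$ is a non-compact Riemann surface, homotopy equivalent to a wedge of circles with only $H^0$ in even degrees; the Atiyah--Hirzebruch spectral sequence (which degenerates in even total degree since $\MU^{\rm odd}(\mathrm{pt}) = 0$) then gives $\MU^{2*}(X(\mathbb{C})) = \mathbb{L}^*$, matching $\Omega^*_{\rm alg}(X) = \mathbb{L}^*$ from (2). If $X$ is projective of genus $g$, the same degeneration yields a free $\mathbb{L}^*$-module $\MU^{2*}(X(\mathbb{C}))$ of rank $2$ with basis $\{1, [\mathrm{pt}]\}$. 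By (1) there is a surjection $\varphi \colon \mathbb{L}^{*\oplus 2} \twoheadrightarrow \Omega^*_{\rm alg}(X)$, $(a,b) \mapsto a \cdot 1_X + b \cdot [x \to X]$; the composition $\theta^{\rm alg} \circ \varphi$ sends $(a,b)$ to $a \cdot 1 + b \cdot [\mathrm{pt}]$, which is an isomorphism $\mathbb{L}^{*\oplus 2} \xrightarrow{\simeq} \MU^{2*}(X(\mathbb{C}))$. Hence $\varphi$ is injective, and both $\varphi$ and $\theta^{\rm alg}$ are isomorphisms. The main obstacle will be the vanishing $[x \to X] = 0$ in part (2), especially over non-algebraically closed base fields where $\Pic(X)$ can carry torsion classes coming from boundary points of higher residue degree; carefully combining the algebraic equivalence relations from the projective completion with the pullback-induced vanishing from $\mathbb{A}^1$ is the key technical step.
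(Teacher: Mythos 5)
Parts (1) and (3) of your plan are essentially fine. Part (1) is the paper's own argument (generalized degree formula, plus additivity of $\tilde{c}_1$ on a curve via (Dim) and the fact that $\tilde{c}_1$ factors through $\Pic/\!\sim_{\rm alg}$, which injects into $\mathbb{Z}$ by degree); the one elision, shared with the paper, is that the relation $d_2[P_1 \to X] = d_1[P_2 \to X]$ alone does not express one point class as an $\mathbb{L}^*$-combination of the other, so over a general field the second generator should be taken to be $\tilde{c}_1(L_0)(1_X)$ for a line bundle $L_0$ whose degree is the gcd of the point degrees. Part (3) is correct and takes a mildly different route in the projective case: you compare the surjection $\mathbb{L}^{*\oplus 2} \to \Omega^*_{\rm alg}(X)$, $(a,b)\mapsto a\cdot 1_X + b\cdot[x\to X]$, directly against the $\mathbb{L}^*$-basis $\{1,[\mathrm{pt}]\}$ of $\MU^{2*}(X(\mathbb{C}))$, whereas the paper compares the $\Omega^*_{\rm alg}$- and $\MU^{2*}$-localization sequences for $\{p\}\subset X \supset U$; your version avoids localization at this step, though like the paper it still needs (2) for the affine case. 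Your injectivity argument in (2) via the Gysin splitting and generic constancy is also fine and matches the paper's use of Proposition \ref{prop:Cob-point} over the residue field.

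The genuine gap is in part (2) over a non-algebraically closed field, and your proposed supplement is in fact vacuous. Since $\Pic(\mathbb{A}^1)=0$, one has $f^*O_{\mathbb{A}^1}(t)\cong O_X$, so the relation $[f^{-1}(t)\to X]=\tilde{c}_1\bigl(O_X(f^*t)\bigr)(1_X)=0$ only says that the Chern class of a principal divisor on $X$ vanishes; this is already contained in (Sect)/(Equiv) and adds nothing to the relations you extracted in (1). In particular it cannot force the vanishing of the class of a degree-one point when every point of $Z=\overline{X}\setminus X$ has degree $>1$: for $X=\mathbb{P}^1_{\mathbb{Q}}\setminus V(x^2+1)$ every finite morphism $X\to\mathbb{A}^1$ has even degree, $\CH_0^{\rm alg}(X)\cong\mathbb{Z}/2$ is generated by the class of a rational point $x$, and by Theorem \ref{thm:Cob-Chow} the class $[x\to X]$ has nonzero image in $\CH_0^{\rm alg}(X)$, so no manipulation of your relations can prove $[x\to X]=0$ there; the claim that $O_X(x)$ is algebraically trivial on an arbitrary affine curve is exactly what fails. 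What the paper uses at this point is the surjectivity of $\CH^0(Z)\to \Pic(\overline{X})/\!\sim$, i.e.\ that the degrees of the boundary points generate the image of the degree map --- automatic when $k$ is algebraically closed (your clean case), and precisely the condition your argument leaves open; the example above shows that some such hypothesis is genuinely needed, not merely a technical convenience that a cleverer choice of $f\colon X\to\mathbb{A}^1$ could remove.
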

\begin{proof} 
We have shown in Theorem \ref{thm:localization**} and Proposition \ref{prop:Gen-const} that $\Omega_*^{\rm alg}$ has the localization
property and is generically constant. Hence, it satisfies the generalized degree formula \cite[Theorem 4.4.7]{LM}. By this degree formula, the cobordism $\Omega^* _{\rm alg}(X)$ is generated as an $\mathbb{L}^*$-module by the cobordism cycles $1_X =[X \to X]$ and $[\{p\} \to X]= [X \to X, O_X (p)]$, where $p$ is a closed point of $X$. Part (1) now follows from the fact
that the map $\deg\colon{\Pic(X)}/{\sim} \to \mathbb{Z}$ is injective.

If $X$ is affine, we choose a smooth compactification $j \colon X \hookrightarrow \overline{X}$ and set $Z \colon= \overline{X} \backslash X$. This yields an exact sequence
\[
\CH^0(Z) \to {\Pic(\overline{X})}/{\sim} \xrightarrow{j^*} 
{\Pic(X)}/{\sim} \to 0
\]
by \cite[Example 10.3.4]{Fulton}, in which the first map is surjective. In particular, the last term is zero. Thus $\Omega^*_{\rm alg}(X)$ is generated by $1_X$ as an $\mathbb{L}^*$-module, \emph{i.e.}, $\mathbb{L}^* \to \Omega^*_{\rm alg}(X)$ is surjective.
On the other hand, for a closed point $p \in X$, the composition with the pull-back $\mathbb{L}^* \to \Omega^*_{\rm alg}(X) \to
\Omega^*_{\rm alg}\left(k(p)\right)$ is an isomorphism by Proposition \ref{prop:Cob-point}. We conclude that the map $\mathbb{L}^* \to \Omega^* _{\rm alg} (X)$ is injective and hence an isomorphism. This proves (2).

For (3), we first observe that as $X(\mathbb{C})$ is a topological surface, we have an induced isomorphism 
\begin{equation}\label{eqn:curve1}
\MU^* (X (\mathbb{C})) \overset{\simeq}{\to} H^* (X (\mathbb{C}), \mathbb{Z}) \otimes _{\mathbb{Z}} \mathbb{L}^*
\end{equation}
by \cite[Theorem 2.2]{Totaro}.
Since the cycle class map of Proposition \ref{top cycle class sm} maps $\Omega_{\rm alg} ^* (X)$ into $\MU^{2*} (X (\mathbb{C}))$, we look at only the even degrees. When $X$ is affine, we have $H^2 (X(\mathbb{C}), \mathbb{Z}) = 0$ so that $\MU^{2*} (X(\mathbb{C})) = H^0 (X (\mathbb{C}), \mathbb{L}^*) = \mathbb{L}^*$. By part (2), the natural map $\Omega_{\rm alg} ^* (X) \to \MU^{2*} (X (\mathbb{C}))$ is simply the identity map of $\mathbb{L}^*$.

When $X$ is not affine (thus, projective), we get $H^i (X(\mathbb{C}) , \mathbb{L}^*) = \mathbb{L}^*$ for both $i=0$ and $2$, and this yields
$\MU^{2*} (X(\mathbb{C})) \simeq \mathbb{L}^* \oplus \mathbb{L}^*$. Take any closed point $p \in X$ and set $U= X \backslash \{p \}$ (which is affine). 
We get the localization diagram

\begin{equation}\label{eqn:curve2}
\xymatrix{
0 \ar[r] & \Omega^*_{\rm alg}(\{p\}) \ar[r] \ar[d] & \Omega^*_{\rm alg}(X) \ar[r] \ar[d] & \Omega^*_{\rm alg}(U) \ar[r] \ar[d] & 0 \\
0 \ar[r] & \mathbb{L}^* \ar[r] & \mathbb{L}^* \oplus \mathbb{L}^*  \ar[r] & \mathbb{L}^* \ar[r] & 0,}
\end{equation}   
where the bottom exact row is the sequence of $\MU^{2*}$ groups of the spaces $\{p \}$, $X$ and $U$. The top row is exact because the left vertical map is an isomorphism (plus Theorem \ref{thm:localization**}). The right vertical map is an isomorphism because $U$ is affine. Hence, the middle map is an isomorphism too.
\end{proof}


As an immediate corollary of Theorems \ref{thm:FC} and \ref{thm:curve}, we obtain the following analogue of Quillen-Lichtenbaum conjecture for the cobordism of smooth curves.

\begin{corollary}\label{cor:LC-cob}
For a smooth curve $X$ over $\mathbb{C}$ and an integer $m \ge 1$, the natural map $\Omega^*(X) \otimes _\mathbb{Z}{\mathbb{Z}}/m \to \MU^{2*}(X(\mathbb{C})) \otimes_{\mathbb{Z}} {\mathbb{Z}}/m$ is an isomorphism.
\end{corollary}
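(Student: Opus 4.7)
The plan is to factor the natural map through $\Omega^*_{\rm alg}(X)$ and apply the two key results already established. By Proposition~\ref{top cycle class sm}, the cycle class map $\theta \colon \Omega^*(X) \to \MU^{2*}(X(\mathbb{C}))$ factors as the composition
\[
\Omega^*(X) \xrightarrow{\Phi_X} \Omega^*_{\rm alg}(X) \xrightarrow{\theta^{\rm alg}} \MU^{2*}(X(\mathbb{C})).
\]
Since $-\otimes_{\mathbb{Z}} {\mathbb{Z}}/m$ is right exact and respects composition, it suffices to check that each factor becomes an isomorphism after tensoring with $\mathbb{Z}/m$.

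For the first factor, Theorem~\ref{thm:FC} applied to $X$ tells us directly that $\Phi_X \otimes \mathbb{Z}/m \colon \Omega^*(X) \otimes_{\mathbb{Z}} \mathbb{Z}/m \to \Omega^*_{\rm alg}(X) \otimes_{\mathbb{Z}} \mathbb{Z}/m$ is an isomorphism. For the second factor, Theorem~\ref{thm:curve}(3) asserts that $\theta^{\rm alg}\colon \Omega^*_{\rm alg}(X) \to \MU^{2*}(X(\mathbb{C}))$ is itself an isomorphism for a smooth curve $X$ over $\mathbb{C}$, so tensoring with $\mathbb{Z}/m$ preserves this isomorphism.

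Composing the two isomorphisms yields the claim. There is no genuine obstacle here: the corollary is essentially the formal composition of Theorem~\ref{thm:FC} (which compares $\Omega^*$ and $\Omega^*_{\rm alg}$ with finite coefficients, for arbitrary $X \in \Sch_k$) with the curve-specific topological realization statement of Theorem~\ref{thm:curve}(3). The only subtlety worth flagging is that the factorization used to split the map is precisely the content of Proposition~\ref{top cycle class sm}; once that commutativity is in hand, the rest is immediate.
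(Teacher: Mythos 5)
Your proof is correct and is exactly the argument the paper intends: the corollary is stated as an immediate consequence of Theorem~\ref{thm:FC} and Theorem~\ref{thm:curve}(3), with the factorization through $\Omega^*_{\rm alg}(X)$ supplied by Proposition~\ref{top cycle class sm}, just as you wrote.
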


\subsection{Surfaces}\label{surface case} For an algebraic surface $X$, the Chow groups of $1$-cycles as well as $0$-cycles modulo rational equivalence often form infinitely generated abelian groups. Since the algebraic cobordism contains more data than Chow groups as shown in \cite[Theorem 4.5.1]{LM}, the algebraic cobordism of a surface is often infinitely generated as an $\mathbb{L}^*$-module. However, under algebraic equivalence, the algebraic cycles on an algebraic surface always form a finitely generated group, by N\'eron-Severi theorem.
We prove an analogous result for the $\mathbb{L}^*$-module $\Omega^* _{\rm alg}(X)$. We use the following graded Nakayama lemma whose proof is an elementary application of a backward induction argument. It is left as an exercise.

\begin{lemma}\label{L Nakayama}\label{lem:NAK}
Let $M^*$ be a $\mathbb{Z}$-graded $\mathbb{L}^*$-module such that for some integer $N \geq 0$, we have $M^n = 0$ for all $n > N$. Suppose that $S = \{\alpha_1, \cdots, \alpha_r\}$ is a set of homogeneous elements in $M^{\ge 0}$ whose images generate $M^* \otimes _{\mathbb{L}^*} \mathbb{Z}$ as an abelian group. Then $M^*$ is generated by $S$ as an $\mathbb{L}^*$-module. 
\end{lemma}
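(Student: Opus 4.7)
The plan is to run the standard graded Nakayama argument, taking advantage of the crucial fact that the Lazard ring in cohomological grading lives in nonpositive degrees: $\mathbb{L}^0 = \mathbb{Z}$ and $\mathbb{L}^n = 0$ for $n > 0$. Consequently its augmentation ideal $I^* := \bigoplus_{n < 0} \mathbb{L}^n$ consists entirely of elements of strictly negative degree, and so multiplication by any element of $I^*$ \emph{raises} the cohomological degree of a homogeneous element.

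Let $\langle S\rangle \subseteq M^*$ be the graded $\mathbb{L}^*$-submodule generated by $S$, and set $Q^* := M^*/\langle S\rangle$. I want to show $Q^* = 0$. Since the images of $S$ generate $M^* \otimes_{\mathbb{L}^*} \mathbb{Z} = M^*/I^*M^*$ as an abelian group, we have $M^* = \langle S\rangle + I^* M^*$, and passing to the quotient yields the key identity
\[
Q^* \;=\; I^* \cdot Q^*.
\]
Moreover, $Q^m = 0$ for all $m > N$, since $M^m = 0$ there.

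The remaining step is downward induction on the degree $m \le N$. Assume inductively that $Q^k = 0$ for every $k > m$. Using $Q^m = (I^* Q^*)^m$, we get
\[
Q^m \;=\; \sum_{n < 0} \mathbb{L}^n \cdot Q^{\,m-n}.
\]
For each $n < 0$ one has $m - n > m$, so $Q^{m-n} = 0$ by the inductive hypothesis. Hence $Q^m = 0$, completing the induction and the proof.

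The only point that could be considered an obstacle is the fact that $M^*$ is not assumed to be bounded below, so a naive Nakayama argument need not terminate; however, the sign convention of the cohomological grading on $\mathbb{L}^*$ makes the induction go \emph{downward} from the upper bound $N$, which is guaranteed to exhaust all degrees in finite steps relative to the given $m$.
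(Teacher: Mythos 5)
Your argument is correct, and it is precisely the ``backward induction argument'' that the paper alludes to while leaving the proof as an exercise: pass to $Q^*=M^*/\langle S\rangle$, use $Q^*=I^*Q^*$ with $I^*=\bigoplus_{n<0}\mathbb{L}^n$, and induct downward from the vanishing bound $N$. Nothing is missing; your proof even shows the hypothesis $S\subseteq M^{\ge 0}$ is not needed for the conclusion.
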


\begin{theorem}\label{thm:surface}Let $X$ be a connected smooth projective surface. Then $\Omega^* _{\rm alg} (X)$ is a finitely generated $\mathbb{L}^*$-module with at most $\rho + 2$ generators, where $\rho$ is the minimal number of generators of the N\'eron-Severi group $\NS(X)$.
\end{theorem}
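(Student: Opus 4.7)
The plan is to apply the graded Nakayama Lemma~\ref{lem:NAK} to $M^{*} = \Omega^{*}_{\rm alg}(X)$. The lemma requires (a) vanishing of $M^{n}$ for $n$ above some bound, and (b) a finite set of homogeneous elements in $M^{\ge 0}$ whose images span $M^{*} \otimes_{\mathbb{L}^{*}} \mathbb{Z}$. For (b), I will use Theorem~\ref{thm:Cob-Chow} to identify this tensor product with $\CH^{*}_{\rm alg}(X)$ and then exhibit a generating set of cardinality $\rho + 2$.

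For (a), I would show that $\Omega^{j}_{\rm alg}(X) = 0$ for every $j > 2 = \dim X$. A homogeneous element of $\Omega^{j}_{\rm alg}(X) = \Omega^{\rm alg}_{2-j}(X)$ is a sum of terms $a \otimes [f\colon Y \to X, L_{1}, \ldots, L_{r}]$ with $a \in \mathbb{L}_{m}$, $m \ge 0$ (since $\mathbb{L}_{m} = 0$ for $m < 0$), and $m + \dim Y - r = 2 - j$. For $j > 2$ this forces $r > \dim Y$, and the relation (Dim) from Definition~\ref{precobordism definition}, applied with $Z = Y$ and $\pi = {\rm id}_{Y}$, kills the cobordism cycle.

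For (b), I would decompose $\CH^{*}_{\rm alg}(X) = \CH^{0}_{\rm alg}(X) \oplus \CH^{1}_{\rm alg}(X) \oplus \CH^{2}_{\rm alg}(X)$ and identify: $[X]$ as the generator of $\CH^{0}_{\rm alg}(X) \simeq \mathbb{Z}$; classes of $\rho$ divisors $[D_{1}], \ldots, [D_{\rho}]$ as generators of $\CH^{1}_{\rm alg}(X) = \NS(X)$ by the definition of $\rho$; and the class $[p]$ of any closed point as a generator of the cyclic group $\CH^{2}_{\rm alg}(X) = \CH_{0}^{\rm alg}(X)$. Cyclicity of $\CH_{0}^{\rm alg}(X)$ amounts to injectivity of the degree map $\CH_{0}^{\rm alg}(X) \to \mathbb{Z}$: any two closed points of the same degree on a smooth projective connected surface are algebraically equivalent, via a smooth projective curve through both (obtained by normalizing a suitable plane section), whose Jacobian renders degree-zero divisors algebraically trivial; the general degree-zero case follows by comparing appropriate multiples. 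These $\rho + 2$ classes lift canonically to $1_{X}$, $[X \to X, O_{X}(D_{i})]$, and $[\{p\} \to X]$ in $\Omega^{\ge 0}_{\rm alg}(X)$.

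Applying Lemma~\ref{lem:NAK} with these data then yields an $\mathbb{L}^{*}$-generating set for $\Omega^{*}_{\rm alg}(X)$ of cardinality $\rho + 2$, which is the conclusion of the theorem. The main technical point I anticipate is the cyclicity of $\CH_{0}^{\rm alg}(X)$ over an arbitrary base field of characteristic zero; this is classical but should be argued carefully, especially the steps of finding a smooth curve through two given closed points on a surface and tracking residue-field degrees through normalization. Everything else is a formal consequence of Theorem~\ref{thm:Cob-Chow}, the (Dim) relation, and Lemma~\ref{lem:NAK}.
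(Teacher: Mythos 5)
Your proposal is correct and is essentially the paper's own proof: the paper deduces the theorem in exactly this way from Theorem \ref{thm:Cob-Chow}, Lemma \ref{lem:NAK}, and the decomposition $\CH^*_{\rm alg}(X)\simeq \mathbb{Z}\oplus \NS(X)\oplus \mathbb{Z}$, and your verifications of the vanishing of $\Omega^{j}_{\rm alg}(X)$ for $j>2$ via (Dim) and of the cyclicity of $\CH^{2}_{\rm alg}(X)$ just make explicit what the paper leaves implicit. One small correction: over a non-algebraically closed field the class of an arbitrary closed point need not generate the cyclic group $\CH^{2}_{\rm alg}(X)$ (its degree may be a proper multiple of the index), so the degree-two element of your generating set should be a zero-cycle of minimal positive degree (a $\mathbb{Z}$-combination of classes $[\{p_i\}\to X]$), which changes nothing in the count $\rho+2$.
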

Note that if $\NS(X)$ is torsion free, then $\rho$ is the Picard number of $X$.
\begin{proof}
This follows immediately from Theorem \ref{thm:Cob-Chow} and Lemma \ref{lem:NAK}, using the fact that $\CH^*_{\rm alg}(X) \simeq \mathbb{Z} \oplus \NS(X) \oplus \mathbb{Z}$.
\end{proof}

\subsection{Threefolds and beyond}\label{threefold case} We saw that for a smooth projective scheme $X$ of dimension $\leq 2$, the $\mathbb{L}^*$-module $\Omega^* _{\rm alg} (X)$ is finitely generated. But, this is the highest we can go. This is due to the following result and some known deep results about algebraic cycles. Recall that for a smooth projective complex scheme $X$, the Griffiths group $\Griff^r (X)$ of $X$ is the group of codimension $r$ homologically trivial cycles modulo algebraic equivalence. In particular, it is a subgroup of $\CH^r_{\rm alg}(X)$.

\begin{theorem}\label{finiteness summary}\label{thm:Finiteness}
For any smooth scheme $X$, the following two statements are equivalent:

\emph{(1)} The Chow group $\CH^* _{\rm alg} (X)$ modulo algebraic equivalence is finitely generated.

\emph{(2)} The cobordism $\Omega^* _{\rm alg} (X)$ is a finitely generated $\mathbb{L}^*$-module.

If $X$ is a smooth projective complex variety, then the following statement is also equivalent to the above two:

\emph{(3)} The Griffiths group $\Griff^* (X)$ is finitely generated.




\end{theorem}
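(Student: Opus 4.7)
The plan is to deduce both (1) $\Leftrightarrow$ (2) and (in the smooth projective complex case) (1) $\Leftrightarrow$ (3) by using Theorem \ref{thm:Cob-Chow} as the bridge between cobordism and Chow.

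For the easy direction (2) $\Rightarrow$ (1), I would simply tensor a finite $\mathbb{L}^*$-generating set of $\Omega^*_{\rm alg}(X)$ with $\mathbb{Z}$ over $\mathbb{L}^*$ and invoke Theorem \ref{thm:Cob-Chow} to obtain a finite $\mathbb{Z}$-generating set of $\CH^*_{\rm alg}(X)$.

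For the reverse direction (1) $\Rightarrow$ (2), the key tool will be the graded Nakayama lemma (Lemma \ref{lem:NAK}) applied to $M^* = \Omega^*_{\rm alg}(X)$. Two hypotheses must be verified: (a) the vanishing $M^i = 0$ for all $i$ larger than some $N \geq 0$, and (b) the existence of homogeneous lifts in $M^{\geq 0}$ of a finite generating set of $M \otimes_{\mathbb{L}^*} \mathbb{Z} = \CH^*_{\rm alg}(X)$. Condition (b) is immediate: $\CH^i_{\rm alg}(X)$ is concentrated in $0 \leq i \leq n := \dim X$ and is finitely generated by hypothesis, so a finite $\mathbb{Z}$-generating set can be lifted to classes in $M^{\geq 0}$ using the surjection $\Phi_X$ of Lemma \ref{lem:surj}. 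For (a), I would invoke the generalized degree formula \cite[Theorem 4.4.7]{LM} for $\Omega_*$, which transfers to $\Omega^{\rm alg}_*$ via the surjection $\Phi_X$. The formula writes every class in $\Omega^{\rm alg}_d(X)$ as an $\mathbb{L}_*$-combination of classes $[\tilde Z \to X]$ with $Z \subset X$ integral of dimension $\leq n$ and coefficients in $\mathbb{L}_{d-\dim Z}$. Since $\mathbb{L}_k = 0$ for $k < 0$, this forces $\Omega^{\rm alg}_d(X) = 0$ whenever $d < 0$, equivalently $M^i = 0$ for $i > n$. Lemma \ref{lem:NAK} then yields (2) with $N = n$.

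For the equivalence (1) $\Leftrightarrow$ (3) when $X$ is smooth projective over $\mathbb{C}$, the topological cycle class map $\CH^r(X) \to H^{2r}(X(\mathbb{C}), \mathbb{Z})$ vanishes on algebraically trivial cycles (they are even homologically trivial) and therefore descends to a map $\CH^r_{\rm alg}(X) \to H^{2r}(X(\mathbb{C}), \mathbb{Z})$ whose kernel is, by the very definition of the Griffiths group, equal to $\Griff^r(X)$. This produces a short exact sequence
\[
0 \to \Griff^r(X) \to \CH^r_{\rm alg}(X) \to T^r \to 0,
\]
where $T^r \subseteq H^{2r}(X(\mathbb{C}), \mathbb{Z})$ denotes the image. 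Since the Betti cohomology of a compact complex manifold is a finitely generated abelian group, $T^r$ is finitely generated for every $r$. Hence $\CH^r_{\rm alg}(X)$ is finitely generated if and only if $\Griff^r(X)$ is, and summing over the finite range $0 \leq r \leq n$ gives (1) $\Leftrightarrow$ (3).

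The only somewhat delicate step I expect is the degree bound $M^i = 0$ for $i > n$ needed to apply Nakayama; it rests entirely on the generalized degree formula (which is available for $\Omega^{\rm alg}_*$ because of the surjection from $\Omega_*$, or alternatively because $\Omega^{\rm alg}_*$ has the localization property of Theorem \ref{thm:localization**} and is generically constant by Proposition \ref{prop:Gen-const}). The remaining reductions are formal consequences of Theorem \ref{thm:Cob-Chow} and elementary finiteness properties of Betti cohomology.
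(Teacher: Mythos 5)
Your proof is correct and follows essentially the same route as the paper: Theorem \ref{thm:Cob-Chow} together with the graded Nakayama Lemma \ref{lem:NAK} gives (1)$\Leftrightarrow$(2), and the sequence $0 \to \Griff^*(X) \to \CH^*_{\rm alg}(X) \to \CH^*_{\rm hom}(X) \to 0$ (your image $T^r$ is exactly $\CH^r_{\rm hom}(X)$) with finite generation of Betti cohomology gives (1)$\Leftrightarrow$(3). Your explicit check of the vanishing hypothesis $M^i=0$ for $i>\dim X$ via the generalized degree formula is a detail the paper leaves implicit, but it is the same argument.
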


\begin{proof} The equivalence (1) $\Leftrightarrow$ (2) follows by
applying Theorem \ref{thm:Cob-Chow} and Lemma \ref{lem:NAK} to 
$M^* = \Omega^*_{\rm alg}(X)$.

When $X$ is a smooth projective complex variety, let $\CH^* _{\rm hom}(X)$ denote the group of algebraic cycles on $X$ modulo homological equivalence. The equivalence (1) $\Leftrightarrow$ (3) follows from the exact sequence
\begin{equation}\label{eqn:Griff}
0 \to \Griff^* (X) \to \CH^* _{\rm alg} (X) \to \CH^* _{\rm hom} (X) \to 0
\end{equation} 
and the observation that $\CH^* _{\rm hom}(X)$ is a subgroup of $H^{2*} (X(\mathbb{C}), \mathbb{Z})$, which is a finitely generated abelian group since $X$ is smooth and projective.\end{proof}



\begin{remark}It was shown by Griffiths \cite{Griff} that the Griffiths groups can be nontrivial. Clemens \cite{Clemens} later showed that $\Griff^2(X)$ is not finitely generated for a general quintic threefold $X$. These results were generalized by Nori \cite{Nori} for algebraic cycles of codimension $\geq 2$. Thus, it follows from Theorem \ref{thm:Finiteness} that the $\mathbb{L}^*$-module $\Omega^*_{\rm alg}(X)$ is in general not finitely generated for a variety of dimension at least three.
\end{remark}

It seems that certain questions about algebraic cycles of smooth projective 
schemes can be lifted to the level of cobordism cycles. As an example, consider the following. We saw in Section \ref{subsection:STCC} that for a smooth complex variety $X$, there are cycle class maps $\theta_X \colon \Omega^*(X) \to \MU^{2*}(X(\mathbb{C}))$ and $\theta^{\rm alg}_X \colon\Omega^*_{\rm alg}(X) \to \MU^{2*}(X(\mathbb{C}))$. Let $\Phi_X \colon \Omega^*(X) \to \Omega^*_{\rm alg}(X)$ be the natural map. We define the \emph{Griffiths groups for the cobordism cycles} to be the graded group
\begin{equation}\label{eqn:Griff-Cob}
\Griff^*_{\Omega} (X) = {\ker(\theta_X)}/{\ker(\Phi_X)}.
\end{equation}
The subgroup $\ker(\theta_X)$ can be called the group of cobordism cycles \emph{homologically equivalent to zero}. We ask the following:

\begin{question}\label{ques:Cob-Griif-fin}
Let $X$ be a smooth projective complex variety of dimension at least three. Is it true that $\Griff^*_{\Omega} (X)$ is a finitely generated $\mathbb{L}^*$-module if and only if $\Griff^*(X)$ is a finitely generated abelian group? In particular, are there examples where $\Griff^*_{\Omega} (X)$ is not finitely generated as an $\mathbb{L}^*$-module?
\end{question}

\section{Rational smash-nilpotence for cobordism}\label{sec:smash-nil}
It was proven by Voevodsky \cite{Voevodsky} and Voisin \cite{Voisin} that if an algebraic cycle $\alpha$ on a smooth projective scheme $X$ is zero in $\CH_*^{\rm alg} (X)_{\mathbb{Q}}$, then the smash-product $\alpha^{\otimes N}\colon= \alpha \times \cdots \times \alpha$ on $X^N \colon= X \times \cdots \times X$ is zero in $\CH_* (X^N)_{\mathbb{Q}}$ for some integer $N>0$. We use the notation $\alpha^{\otimes N}$ instead of $\alpha^N$. The latter symbol denotes the self-intersection of $\alpha$ in $\CH_* (X)_{\mathbb{Q}}$. This section studies the corresponding question for cobordism cycles. 

\begin{definition}\label{def:smash-product}Let $X \in \Sch_k$ and let $\alpha \in \mathcal{Z}_* (X)$. Let $N \geq1$ be an integer.

(1) The \emph{$N$-fold smash-product} $\alpha ^{\boxtimes N} \in \mathcal{Z}_* (X^N)$ is the $N$-fold self-external product 
$\alpha \times \cdots \times \alpha$ (see Definition \ref{basic definitions for cobordism cycles}). 


(2) $\alpha$ is \emph{rationally smash-nilpotent}, if there is an integer $N>0$ such that the image of $\alpha^{\boxtimes N}$ in $\Omega_* (X^N)_{\mathbb{Q}}$ is zero.
\end{definition}

\begin{lemma}\label{prep nilp}Let $X \in \Sch_k$ and let $\alpha, \beta \in \mathcal{Z}_* (X)$.

\emph{(1)} If $\alpha$ or $\beta$ is rationally smash-nilpotent, then so is $\alpha \times \beta$. 

\emph{(2)} If $\alpha$ and $\beta$ are rationally smash-nilpotent, then so is $\alpha + \beta$.
\end{lemma}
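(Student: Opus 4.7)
The plan is to reduce both parts of the lemma to two elementary facts about the external product on cobordism cycles: bilinearity, and commutativity up to pullback along permutation isomorphisms of the factors of $X^N$ (which induce isomorphisms on $\Omega_{*}(X^N)_{\mathbb{Q}}$, so vanishing is preserved). Both facts follow directly from Definition~\ref{basic definitions for cobordism cycles}(4) and the fact, recorded in the excerpt, that $\times$ descends to $\Omega_{*}$. No new input will be required.

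For (1), I will assume without loss of generality that $\alpha^{\boxtimes N} = 0$ in $\Omega_{*}(X^N)_{\mathbb{Q}}$. The cycle $(\alpha \times \beta)^{\boxtimes N}$ lives on $(X \times X)^N$, which is canonically isomorphic to $X^N \times X^N$ via the permutation $\sigma$ that regroups the $2N$ copies of $X$ into the two blocks of $\alpha$- and $\beta$-factors. Under this permutation, $(\alpha \times \beta)^{\boxtimes N}$ corresponds to $\alpha^{\boxtimes N} \times \beta^{\boxtimes N}$, which vanishes in $\Omega_{*}(X^N \times X^N)_{\mathbb{Q}}$ by bilinearity of the external product and the hypothesis. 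Since the permutation induces an isomorphism on the cobordism groups, the original class vanishes as well.

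For (2), let $M$ and $K$ be integers with $\alpha^{\boxtimes M}=0$ and $\beta^{\boxtimes K}=0$ in the corresponding rational cobordism groups, and set $N := M + K - 1$. I will expand, by distributivity of $\times$ over $+$,
\[
(\alpha + \beta)^{\boxtimes N} = \sum_{I \subseteq \{1, \ldots, N\}} \gamma_I,
\]
where $\gamma_I = \gamma_1 \times \cdots \times \gamma_N$ with $\gamma_i = \alpha$ for $i \in I$ and $\gamma_i = \beta$ otherwise. For each such $I$, the arithmetic identity $|I| + (N - |I|) = M + K - 1$ forces either $|I| \geq M$ or $N - |I| \geq K$. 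Up to a permutation of the factors of $X^N$, $\gamma_I$ equals $\alpha^{\boxtimes |I|} \times \beta^{\boxtimes (N-|I|)}$; in the first case this equals $(\alpha^{\boxtimes M} \times \alpha^{\boxtimes (|I|-M)}) \times \beta^{\boxtimes (N-|I|)}$, which vanishes by bilinearity and the hypothesis on $\alpha$, and symmetrically in the second case. Hence every $\gamma_I$ is zero in $\Omega_{*}(X^N)_{\mathbb{Q}}$, and so is the sum $(\alpha+\beta)^{\boxtimes N}$.

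I do not anticipate a genuine obstacle. The only potential source of imprecision is the permutation bookkeeping in both parts, but this is harmless: any permutation of the factors of a power of $X$ is an isomorphism of schemes, hence induces an automorphism of the corresponding cobordism group, under which vanishing of a class is equivalent to vanishing of its pullback.
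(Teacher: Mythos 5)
Your argument is correct and is essentially the paper's proof: the paper likewise deduces (1) from $(\alpha\times\beta)^{\boxtimes N}=\alpha^{\boxtimes N}\times\beta^{\boxtimes N}$ using commutativity of $\times$ (up to identifying cycles related by permuting the factors of the base) and handles (2) by the binomial theorem. Your version merely makes explicit what the paper leaves implicit, namely the exponent $N=M+K-1$ and the fact that permutation automorphisms of $X^N$ induce isomorphisms on $\Omega_*(X^N)_{\mathbb{Q}}$, so vanishing of the reordered terms suffices.
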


\begin{proof} Note that the external product $\times$ is commutative because in Definition \ref{cobordism cycle LM} we identified all isomorphic cobordism cycles. For (1), if $\alpha^{\boxtimes N} = 0 \in \Omega_* (X^N)_{\mathbb{Q}}$, then $(\alpha \times \beta)^{\boxtimes N} = \alpha^{\boxtimes N} \times \beta^{\boxtimes N} = 0 \in \Omega_* (X^{2N})_{\mathbb{Q}}$. The case $\beta ^{\boxtimes N} = 0$ in $\Omega_* (X^N) _{\mathbb{Q}}$ is similar. For (2), use the binomial theorem since $\times$ is commutative.
\end{proof}


We now prove the cobordism analogue of the result \cite[Corollary 3.2]{Voevodsky}:

\begin{theorem}\label{thm:smash nil}Let $X$ be a smooth projective scheme and
let $\alpha \in \mathcal{Z}_* (X)$. If the image of $\alpha$ in $\Omega_* ^{\rm alg} (X) _{\mathbb{Q}}$ is trivial, then it is rationally smash-nilpotent.
\end{theorem}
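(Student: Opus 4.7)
The strategy is to follow Voevodsky's original approach for Chow groups: combine the basic exact sequence of Theorem~\ref{thm:FES-ST} with the known Chow-theoretic smash-nilpotence for a curve-difference cycle, transferred through a projection-formula identity. First I would tensor the exact sequence of Theorem~\ref{thm:FES-ST} with $\Q$, so that the hypothesis $\alpha \equiv 0$ in $\Omega_*^{\rm alg}(X)_{\Q}$ produces an equality
\[
\alpha = \sum_j \bigl(i^*_{1,j} - i^*_{2,j}\bigr)(\gamma_j) \qquad \text{in } \Omega_*(X)_{\Q},
\]
for finitely many triples $(C_j, t_{1,j}, t_{2,j})$ of smooth projective connected curves with distinct $k$-points, and some $\gamma_j \in \Omega_*(X \times C_j)_{\Q}$. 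After clearing denominators and invoking Lemma~\ref{prep nilp}, which guarantees that rational smash-nilpotence is closed under addition and external products, the claim reduces to showing that every single term $\beta := i_1^*(\gamma) - i_2^*(\gamma)$, with $\gamma \in \Omega_*(X \times C)$, $C$ a smooth projective connected curve and $t_1 \neq t_2 \in C(k)$, is rationally smash-nilpotent.

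Next, since $X \times C$ is smooth projective, the intersection product on cobordism is available, and the standard projection formula together with the identity $q^*([t_j]) = [X \times \{t_j\}]$ (where $\pi \colon X \times C \to X$ and $q \colon X \times C \to C$ are the two projections) yields
\[
\beta = \pi_*\bigl(q^*([t_1] - [t_2]) \cdot \gamma\bigr).
\]
Iterating this and using the compatibility of external product with projective push-forward and with the intersection product on smooth schemes (Theorem~\ref{thm:lci prop}), I would derive
\begin{equation*}
\beta^{\boxtimes N} = \pi^N_*\bigl((q^N)^* \widetilde{D}_N \cdot \gamma^{\boxtimes N}\bigr) \in \Omega_*(X^N),
\end{equation*}
where $\pi^N \colon X^N \times C^N \to X^N$ and $q^N \colon X^N \times C^N \to C^N$ are the projections, and $\widetilde{D}_N := ([t_1]-[t_2])^{\boxtimes N} \in \Omega_0(C^N)$. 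The problem therefore reduces to producing some $N \ge 1$ for which $\widetilde{D}_N$ vanishes in $\Omega_0(C^N)_{\Q}$.

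Since $[t_1] - [t_2] \in \CH_0(C)$ lies in $\Pic^0(C)$ and is in particular algebraically trivial, the Voevodsky--Voisin theorem for algebraic cycles gives that $([t_1]-[t_2])^{\times N}$ vanishes in $\CH_0(C^N)_{\Q}$ for $N$ sufficiently large. To transfer this vanishing to cobordism, I would show that for the smooth projective scheme $Z = C^N$, the natural map $\Omega_0(Z)_{\Q} \to \CH_0(Z)_{\Q}$ is an isomorphism: surjectivity follows from Theorem~\ref{thm:Cob-Chow}, while injectivity comes from the rational splitting $\Omega_*(Z)_{\Q} \simeq \CH_*(Z)_{\Q} \otimes_{\Q} \mathbb{L}_*^{\Q}$ together with the vanishing $\CH_k(Z)_{\Q} = 0 = \mathbb{L}_k^{\Q}$ for $k<0$, which forces the kernel $\mathbb{L}^{\Q}_{>0} \cdot \Omega_*(Z)_{\Q}$ to contribute nothing in degree zero. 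Consequently $\widetilde{D}_N = 0$ in $\Omega_0(C^N)_{\Q}$, and the identity of the previous paragraph yields $\beta^{\boxtimes N} = 0$ in $\Omega_*(X^N)_{\Q}$. The main obstacles I anticipate are twofold: first, the careful verification of the projection-formula identity for $\beta^{\boxtimes N}$, with correct degree accounting through $\boxtimes$ and the intersection product on $X^N \times C^N$; and second, the transfer of Voevodsky--Voisin from Chow to cobordism in degree zero, since \emph{a priori} the kernel of the cycle class map could absorb contributions from negative-dimensional cobordism, and one needs the Lazard grading constraints, together with the rational splitting, to rule this out.
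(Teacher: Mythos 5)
Your argument is correct, and its overall skeleton coincides with the paper's: tensor the basic exact sequence with $\Q$, use Lemma \ref{prep nilp} to reduce to a single term coming from one curve $(C,t_1,t_2)$, reduce that term to the $0$-cycle class $[t_1]-[t_2]$ on $C$, quote Voevodsky's smash-nilpotence for algebraically trivial $0$-cycles on $C$, and transfer back through the identification of degree-zero cobordism with $\CH_0$. Where you genuinely diverge is the middle step: the paper passes to the double-point presentation $\omega_*$, where a class on $X\times C$ may be taken to be a geometric generator $[g\colon Y\to X\times C]$, and then uses the transversality result \cite[Proposition 3.3.1]{LM} to write $(i_1^*-i_2^*)[g]=f_*\pi^*\left([t_1\to C]-[t_2\to C]\right)$; you instead stay in $\Omega_*$ and exploit the ring structure and projection formula on the smooth projective scheme $X\times C$ to get $i_1^*\gamma-i_2^*\gamma=\pi_*\bigl(q^*([t_1]-[t_2])\cdot\gamma\bigr)$ for an arbitrary class $\gamma$, and then iterate through external powers. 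Your route avoids the $\omega_*$-identification and the moving/transversality argument, at the price of verifying the multiplicativity and push--pull compatibilities for $\boxtimes$ that you flag (these are available since $X$ is smooth projective, so both routes work). Two small repairs: the surjectivity of $\Omega_0(C^N)_{\Q}\to\CH_0(C^N)_{\Q}$ should be cited from \cite[Theorem 4.5.1]{LM} rather than Theorem \ref{thm:Cob-Chow}, which concerns $\Omega^{\rm alg}_*\otimes_{\mathbb{L}_*}\Z\simeq\CH^{\rm alg}_*$; and you do not need the rational splitting $\Omega_*(Z)_{\Q}\simeq\CH_*(Z)_{\Q}\otimes_{\Q}\mathbb{L}_*^{\Q}$ to handle injectivity in degree zero, since \cite[Lemma 4.5.10]{LM} (which the paper invokes at exactly this point) already gives $\omega_0(C^N)_{\Q}\simeq\CH_0(C^N)_{\Q}$ compatibly with external products.
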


\begin{proof}By \cite[Theorem 1]{LP} and Theorem \ref{thm:comparison}, we may identify $\Omega_* (X)$ and $\Omega_* ^{\rm alg} (X)$ with $\omega_* (X)$ and $\omega_* ^{\rm alg} (X)$, respectively. Consider the exact sequence of Theorem \ref{thm:FES} with rational coefficients,
\[
{\underset{(C,t_1, t_2)}\bigoplus} \omega_*(X \times C)_{\mathbb{Q}} \xrightarrow{\theta'} \omega_*(X)_{\mathbb{Q}} \xrightarrow{\Psi_X} \omega^{\rm alg}_*(X)_{\mathbb{Q}} \to 0
\]
and look at the image of $\alpha$ in $\omega_* (X)_{\mathbb{Q}}$, also denoted by $\alpha$. Since $\alpha$ belongs to $ \ker \Psi_X$ by the given assumption, we have $\alpha \in {\rm Im} ( \theta') $. By Lemma \ref{prep nilp}-(2), it is enough to consider $\alpha$ of the form $(i_1 ^* - i_2 ^* ) (\beta)$ for $\beta = [g\colon Y \to X\times C]\in \omega_* (X \times C)$. So, we suppose $\alpha=(i_1 ^* - i_2 ^*) (\beta)$. 

Since $X \times C$ is smooth, by the transversality \cite[Proposition 3.3.1]{LM} combined with \cite[Theorem 1]{LP}, we may assume that $g$ is transversal to the closed immersions $i_j$, $j=1,2$. Hence, the fiber product $Y_{t_j}$ of $X\times \{ t_j\}$ and $Y$ over $X \times C$ is smooth and $i_j ^* (\beta) = [Y_{t_j} \to X]$. On the other hand, if we let $\pi \colon= pr_2 \circ g \colon Y \to X \times C \to C$ (which is projective), we see that
$\pi^* \left([ \{t_j\} \to C ]\right) = [Y_{t_j} \to Y]$. Hence, for $f\colon= pr_1 \circ g \colon Y \to X \times C \to X$ (which is projective because
$C$ is projective), we get $f_* \pi^* \left([ \{t_j\} \to C]\right) = [Y_{t_j} \to X]$ so that $ (i_1 ^* - i_2 ^*) (\beta) = [ Y_{t_1} \to X] - [Y _{t_2} \to X] = f_* \pi^* ( [ \{t_1\} \to C] - [\{t_2\} \to C]).$ Set $\gamma\colon= [ \{t_1\} \to C] - [\{t_2\} \to C] \in \omega_0 (C) _{\mathbb{Q}}$. 

We then have $\alpha = f_* \pi^* (\gamma)$ with $\gamma \in \omega_0 (C)_{\mathbb{Q}}$ such that $\gamma = 0 \in \omega_0 ^{\rm alg} (C)_{\mathbb{Q}}$. We claim that $\gamma$ is rationally smash-nilpotent. 

Under the isomorphism $\omega_0  (C) _{\mathbb{Q}}  \overset{\simeq}{\to} \CH_0 (C)_{\mathbb{Q}}$ of \cite[Lemma 4.5.10]{LM}, the image of $\gamma$ in $\CH_0 (C)_{\mathbb{Q}}$ is the $0$-cycle $\bar{\gamma}=[\{t_1\}] - [ \{t_2\} ] \in \CH_0 (C)_{\mathbb{Q}}$, whose image in $\CH_0 ^{\rm alg} (C)_{\mathbb{Q}}$ is trivial. Hence by \cite[Corollary 3.2]{Voevodsky}, we see that 
$\bar{\gamma} ^{\otimes N} = 0 \in \CH_0 (C^N)_{\mathbb{Q}}$ for some integer $N>0$. Since the isomorphism $\omega_0 (C^N)_{\mathbb{Q}} \simeq \CH_0 (C^N)_{\mathbb{Q}}$ of \cite[Lemma 4.5.10]{LM} respects the external products, we conclude that $\gamma ^{\boxtimes N} = 0 \in \omega_0 (C^N)_{\mathbb{Q}}$.

Since $\gamma$ is rationally smash-nilpotent, we now easily see that $\alpha = f_* \pi^* (\gamma)$ is also rationally smash-nilpotent since the push-forward and the pull-back maps respect external products (\emph{cf.} Theorem \ref{thm:lci prop}).\end{proof}

\begin{remark}We remark that the proof of Theorem \ref{thm:smash nil} uses \cite{Voevodsky} only for smooth projective curves. \end{remark}

\begin{remark}
Theorem \ref{thm:smash nil} shows that all algebraically trivial cobordism cycles on smooth projective schemes are smash-nilpotent with the $\mathbb{Q}$-coefficients. Motivated by \cite[Conjecture 4.2]{Voevodsky}, one can further ask whether numerical triviality of cobordism cycles is equivalent to smash-nilpotence for a suitable notion of numerical equivalence for cobordism cycles. The authors do not know how to answer this.

As a weaker version, we wonder if homologically trivial cobordism cycles are smash-nilpotent, where homological equivalence on cobordism cycles was defined around \eqref{eqn:Griff-Cob}. For abelian varieties one might try the following, motivated by \cite{KS}. Let $A$ be an abelian variety and for each $m \in \mathbb{Z}$, let $\left< m \right> : A \to A$ be the multiplication morphism by $m$. Let's call a cobordism cycle $\beta \in \Omega^* (A)_{\mathbb{Q}}$ \emph{skew} if $\left<-1\right>^* (\beta) = - \beta$. A skew cobordism cycle is homologically trivial. Our guess is that any skew cobordism cycle on an abelian variety $A$ is smash-nilpotent. The corresponding question for algebraic cycles was answered in \cite[Proposition 1]{KS} and it was deduced that any homologically trivial cycle on an abelian variety of dimension $\leq 3$ is smash-nilpotent. 

To answer it, the following strategy is likely to work. Firstly, use the category of cobordism motives in the sense of \cite[\S 5.1]{NZ} and \cite[\S 2]{VY}, a cobordism analogue of the category of Chow motives. Secondly, use the abelian structure on $A$ to prove an analogue of Chow-K\"unneth decomposition. Lastly, imitate the arguments of \cite{KS}. A detailed discussion of this approach
will appear in a separate paper.
\end{remark}

\section{Appendix}\label{section:appendix}
This section gives a summary of the constructions from \cite[\S 6]{LM} related to the Gysin maps and the pull-backs via l.c.i. morphisms on the algebraic cobordism, that are used in this paper. The only new result is Lemma \ref{lem:Int-ps-div}, used in the construction of the map $i_D ^* \colon \Omega_* ^{\rm alg} (X) \to \Omega_{*-1} ^{\rm alg} (|D|)$ of \eqref{eqn:Int-div-st} for a pseudo-divisor $D$ on $X$.

\begin{definition}[{\rm \cite[6.1.2]{LM}}]\label{defn:cobord cycle with D} Let $X \in \Sch_k$ and let $D$ be a pseudo-divisor on $X$. 

(1) $\mathcal{Z}_*(X)_D$ is the subgroup of $\mathcal{Z}_*(X)$ generated by the cobordism cycles $[f\colon Y \to X, L_1, \cdots, L_r]$ for which either $f(Y) \subset |D|$ holds, or $f(Y) \not \subset |D|$ and $\Div f^* D$ is a strict normal crossing divisor on $Y$. 

(2) Let $ \mathcal{R} _* ^{\rm Dim} (X)_D$ be the subgroup of $\mathcal{Z}(X)_D$ generated by the cobordism cycles of the form $[f \colon Y \to X, \pi^* (L_1), \cdots, \pi^* (L_r), M_1, \cdots, M_s]$, where $\pi\colon Y \to Z$ is smooth quasi-projective, $Z \in \Sm_k$, and $L_1, \cdots, L_{r>\dim Z}$ are line bundles on $Z$. We let $\underline{\mathcal{Z}}_* (X)_D\colon= \mathcal{Z}_* (X)_D/ \mathcal{R}_* ^{\rm Dim} (X)_D$. 

The projective push-forward and smooth pull-back on $\mathcal{Z}_* (-)_D$ can be defined as for $\mathcal{Z}_* (-)$, and likewise for $\underline{\mathcal{Z}}_* (-)$. 

(3) For a line bundle $L \to X$, define the Chern class operation $\tilde{c}_1 (L) \colon \mathcal{Z} _* (X)_D \to \mathcal{Z}_{*-1} (X)_D$ as for $\mathcal{Z}_* (X)$. This descends onto $\underline{\mathcal{Z}}_* (X)_D$. 

(4) We have the external product 
\[
\times \colon \mathcal{Z}_* (X) _D \otimes \mathcal{Z}_* (X')_{D'} \to \mathcal{Z}_* (X \times X') _{pr_1 ^* D + pr_2 ^* D'}
\]
as for $\mathcal{Z}_* (-)$. This descends onto $\underline{\mathcal{Z}}_* (-)_D$-level.
\end{definition}

Let $X \in \Sch_k$, $D$ be a pseudo-divisor on $X$, and $f$ be a projective morphism $f\colon Y \to X$ from a smooth irreducible scheme $Y$. A strict normal crossing divisor $E$ on $Y$ is said to be in \emph{very good position with $D$} if either $f(Y) \subset |D|$ holds, or $f(Y) \not \subset |D|$ and $E + \Div f^*D$ is a strict normal crossing divisor on $Y$. If $E$ is in very good position with $D$, then for each face $i_J \colon E_J\hookrightarrow E$ and the induced composition $f_J\colon= f \circ i_J\colon E_J \to Y \to X$, either $f_J (E_J) \subset |D|$ holds, or $\Div f_J ^* D$ is a strict normal crossing divisor on $E_J$, by \cite[Remark 6.1.4(1)]{LM}

\begin{definition}[{\rm \cite[Definition 6.1.5]{LM}}]\label{defn:cobord cycle with D 1} Let $X \in \Sch_k$ and let $D$ be a pseudo-divisor on $X$. Let $\mathcal{R}_* ^{\rm Sect}(X)_D$ be the subgroup of $\mathcal{Z}_* (X)_D$ generated by elements of the form $[f \colon Y \to X, L_1, \cdots, L_r] - [f \circ i\colon Z \to X, i^* (L_1), \cdots, i^* (L_{r-1})],$ with $r>0$, such that 

(1) $[f\colon Y \to X, L_1, \cdots, L_r] \in \mathcal{Z}_* (X)_D$, and

(2) $i\colon Z \to Y$ is the closed immersion of the subscheme given by the vanishing of a transverse section $s\colon Y \to L_r$ such that $Z$ is in very good position with $D$.

We let $\underline{\Omega}_* (X)_D \colon= \underline{\mathcal{Z}}_* (X)_D / \mathcal{R}_* ^{\rm Sect} (X)_D.$

\end{definition}

\begin{definition}[{\cite[Definitions 6.1.6]{LM}}]\label{defn:cobord cycle with D 2}Let $X\in \Sch_k$ and let $D$ be a pseudo-divisor on $X$. 

(1) Let $\mathcal{R}_*(X)_D$ be the subgroup of $\mathcal{Z}_* (X)_D$ generated by elements of the form $[Y \to X, L_1, \cdots, L_r] - [Y' \to X, L_1 ', \cdots, L_r ']$ such that
	\begin{enumerate}
	\item [(a)] $[Y \to X, L_1, \cdots, L_r]$ and $[Y' \to X, L_1 ', \cdots, L_r ']$ are in $\mathcal{Z}_* (X)_D$, and 
	\item [(b)] there exist an isomorphism $\phi\colon Y \to Y'$ over $X$, a permutation $\sigma$ of $\{ 1, \cdots, r\}$ and an isomorphism $L_i \simeq \phi^* (L' _{\sigma (i)})$.
	\end{enumerate}
We define $\underline{\Omega}_* (X)_D\colon= \underline{\Omega}_* (X)_D / \mathcal{R}_*(X)_D$.

(2) Let $\Omega_*(X)_D$ be the quotient of $\mathbb{L}_* \otimes _{\mathbb{Z}} \underline{\Omega}_* (X)_D$ by the relations of the form 
\[
({\rm Id}_{\mathbb{L}_*} \otimes f_*) ( F_{\mathbb{L}_*} (\tilde{c}_1 (L), \tilde{c}_1 (M)) (\eta) - \tilde{c}_1 (L \otimes M) (\eta))
\]
where $L, M$ are line bundles on $Y$, $\eta \in \underline{\Omega}_*(Y)_D$
and $[f \colon Y \to X]$ is a cobordism cycle for which either $f(Y) \subset |D|$ holds, or $f(Y) \not \subset |D|$ and $\Div f^* D$ is a strict normal crossing divisor on $Y$.

The Chern class operation and the external product are induced on $\Omega_*(-)_D$. 
\end{definition}

It is clear from the above definition that there is a natural map $\theta_X \colon \Omega_*(X)_D \to \Omega_*(X)$. The main content of \cite[\S 6.4.1]{LM} is the proof of the following moving lemma.

\begin{theorem}[{\rm \cite[Theorem 6.4.12]{LM}}]\label{thm:CML}
For $X \in \Sch_k$, the natural map $\theta_X\colon \Omega_* (X)_D \to \Omega_* (X)$ is an isomorphism.
\end{theorem}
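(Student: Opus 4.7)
The plan is to establish the isomorphism $\theta_X \colon \Omega_*(X)_D \to \Omega_*(X)$ by separately proving surjectivity and injectivity, with the moving of cobordism cycles into good position with $D$ driven by Hironaka's embedded resolution of singularities combined with the blow-up formula \cite[Proposition 3.2.4]{LM}.

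For surjectivity, I would start with an arbitrary cobordism cycle $\alpha = [f \colon Y \to X, L_1, \ldots, L_r]$ and produce a representative lying in $\mathcal{Z}_*(X)_D$. One may assume $f(Y) \not\subset |D|$. Apply Hironaka to obtain a projective birational morphism $\mu \colon \widetilde{Y} \to Y$, a sequence of blow-ups with smooth centers supported over $f^{-1}(|D|)$, such that $(f\circ \mu)^*D$ is a strict normal crossing divisor on $\widetilde{Y}$. The blow-up formula then writes $1_Y - \mu_*(1_{\widetilde{Y}})$ as $j_*(\eta)$ for some $\eta \in \Omega_*(W)$, where $j \colon W \hookrightarrow Y$ is the inclusion of a proper closed subscheme. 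Applying the Chern class operators $\tilde c_1(L_i)$ and pushing forward via $f$, one expresses $\alpha$ as the sum of a cycle in $\mathcal{Z}_*(X)_D$ (namely the pulled-back cycle on $\widetilde{Y}$) and a class supported on $f(W)$, which is strictly lower-dimensional. A Noetherian induction on $\dim Y$ (or on the dimension of the image) then completes the surjectivity.

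For injectivity, I would argue that each of the defining relations of $\Omega_*(X)$ --- namely (Dim), (Sect), and (FGL) of \cite[Definition 2.4.10]{LM} --- can be realized by witnesses that are themselves in very good position with $D$. Concretely, given two cobordism cycles $\alpha, \beta \in \mathcal{Z}_*(X)_D$ differing by such a relation in $\Omega_*(X)$, one applies resolution of singularities not only to the cycles but also to the witness data (the smooth quasi-projective morphism $\pi \colon Y \to Z$ for (Dim); the transverse section $s \colon Y \to L$ for (Sect); the auxiliary projective cobordism giving the formal group law identity). This produces a relation between the same two classes inside $\mathcal{Z}_*(X)_D$, yielding their equality already in $\Omega_*(X)_D$.

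The hard part will be the injectivity step, and in particular the (Sect) relation: one must arrange that the transverse section $s \colon Y \to L$ has its smooth vanishing divisor $Z \hookrightarrow Y$ not merely SNC with $\text{Div}(f^*D)$, but in \emph{very good position} with $D$ in the sense of Definition \ref{defn:cobord cycle with D 1}. This typically requires a delicate combination of Bertini-type movement of the section $s$ within its linear system together with further blow-ups, and the argument must be carried out compatibly across all the line bundles $L_1, \ldots, L_r$ so as not to spoil the good position already achieved. This is precisely the technical heart of \cite[\S 6.4.1]{LM}, where a careful induction on $(\dim Y, r)$ together with a cancellation argument in a refined Chow-type filtration on $\mathcal{Z}_*(X)_D$ is needed to close the loop.
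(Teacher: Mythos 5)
This statement is not proved in the paper at all: it is recalled verbatim from Levine--Morel (\cite[Theorem 6.4.12]{LM}), and the paper explicitly defers its proof to \cite[\S 6.4]{LM}. So there is no in-paper argument to compare against; the relevant question is whether your sketch would stand on its own, and it does not. The surjectivity half is a reasonable outline (resolve $Y$ so that $(f\circ\mu)^*D$ becomes strict normal crossing, use the blow-up formula \cite[Proposition 3.2.4]{LM} to write the discrepancy as a class supported on a proper closed subscheme, and induct on the dimension of the support), and it is close in spirit to arguments the paper does carry out elsewhere, e.g.\ Case 2 of the proof of Theorem \ref{thm:FES}. One caveat even here: the induction must be run on the dimension of the support/image in $X$ (and one must control the auxiliary line bundles in the correction terms via the generation results of \cite[\S 2.5]{LM}), not simply on $\dim Y$, since classes in $\Omega_*(W)$ are represented by cycles whose sources need not have smaller dimension.

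The genuine gap is the injectivity step. Your proposal asserts that each defining relation (Dim), (Sect), (FGL) ``can be realized by witnesses that are themselves in very good position with $D$,'' but this assertion is precisely the content of the moving lemma, and no argument is offered for it --- you acknowledge it is ``the technical heart of \cite[\S 6.4.1]{LM}'' and in effect cite the very result being proved. In \cite{LM} this is not handled by resolving the witness data in place; it requires the deformation construction of \cite[Lemma 6.4.1]{LM} (a projective birational modification of $Y \times \mathbb{P}^1$, used also in the paper's Lemma \ref{lem:Int-ps-div}), which produces a cobordism between the given cycle and one in good position while keeping track of the correction terms, followed by a double induction to verify that the relations of $\Omega_*(X)$ already hold in $\Omega_*(X)_D$. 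Without supplying this construction (or an equivalent mechanism for moving the (Sect) divisor and the formal-group-law witnesses into very good position compatibly with all the $L_i$), the injectivity claim is unsupported, so the proposal is an outline rather than a proof.
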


Now we define the intersection by $D$ on $\Omega_*  (X)_D$, namely, $D(-) \colon \Omega_*(X)_D \to \Omega_{*-1}  (|D|)$. First recall the map $D(-) \colon \mathcal{Z}_* (X)_D \to \Omega_{*-1} (|D|)$.

\begin{definition}[{\rm \cite[\S 6.2.1]{LM}}]\label{defn:Intersection*}
Let $X \in \Sch_k$ and let $D= (|D|, O_X (D), s)$ be a pseudo-divisor on $X$. Let $\eta \colon = [ f \colon Y \to X, L_1, \cdots, L_r] \in \mathcal{Z}_* (X)_D$.

(1) If $f(Y) \subset |D|$, let $f^D \colon Y \to |D|$ be the induced morphism from $f$. Note that $\tilde{c}_1 (f^* O_X (D)) ([{\rm Id}_Y \colon Y \to Y, L_1, \cdots, L_r]) \in \Omega_{*-1} (Y)$. We define 
\[
D(\eta) \colon= f^D _* \left\{ \tilde{c}_1 (f^* O_X (D))\left([{\rm Id}_Y\colon Y \to Y, L_1, \cdots, L_r]\right) \right\}\in \Omega_{*-1} (|D|).
\] 

(2)  If $f(Y) \not \subset |D|$, then $\tilde{D}\colon= \Div f^* D$ is a strict normal crossing divisor on $Y$. Let $f^D \colon |\tilde{D}| \to |D|$ be the restriction of $f$, and $L_i ^D$ be the restriction of $L_i$ on $|\tilde{D}|$. We define $D(\eta)\colon=f_* ^D \{ \tilde{c}_1 (L_1 ^D) \circ \cdots \circ \tilde{c}_1 (L_r ^D)  ( [\tilde{D} \to |\tilde{D}|])\} \in \Omega_{*-1}(|D|),$ where the cobordism cycle $[\tilde{D} \to |\tilde{D}|] \in \Omega_* (|\tilde{D}|)$ is discussed in Section \ref{background for Omega adpc} and \cite[\S 3.1]{LM}.
\end{definition}

This descends to give $D(-) \colon \Omega_* (X)_D \to \Omega_{*-1} (|D|)$ by \cite[\S 6.2]{LM}. To show that it induces \eqref{eqn:Int-div-st}, we need the following:

\begin{lemma}\label{lem:Int-ps-div}
Let $X \in \Sch_k$ and let $D$ be a pseudo-divisor on $X$. Let $[f\colon Y \to X, L]$ and $[f\colon Y \to X, M]$ be two cobordism cycles in $\Omega_*(X)$ such that $L\sim M$. Let $\eta\colon=[Y \to X, L] - [Y \to X, M]$. Then $D\circ \phi_X (\eta) \in \ker(\Omega_{*-1}(|D|) \to \Omega^{\rm alg}_{*-1}(|D|))$, where $\phi_X = \theta_X ^{-1}$ of Theorem \ref{thm:CML}.
\end{lemma}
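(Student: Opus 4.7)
The strategy is to find compatible representatives in $\Omega_*(X)_D$ of $\phi_X([Y \to X, L])$ and $\phi_X([Y \to X, M])$ that differ only in their last line-bundle slot, then compute $D(-)$ termwise and invoke the kernel description provided by Proposition \ref{prop:Alg-st}. A first observation is that the kernel of $\Omega_{*-1}(|D|) \to \Omega^{\rm alg}_{*-1}(|D|)$ contains every difference of the form
\[
[g\colon Z \to |D|, N_1, \ldots, N_s, L'] - [g\colon Z \to |D|, N_1, \ldots, N_s, M']
\]
with $L' \sim M'$, because this difference equals $g_*\tilde{c}_1(N_1)\cdots\tilde{c}_1(N_s)(\tilde{c}_1(L')-\tilde{c}_1(M'))(1_Z)$, which vanishes in $\Omega^{\rm alg}_{*-1}(|D|)$ since pushforwards and Chern class operations descend to $\Omega^{\rm alg}_*$ and $\tilde{c}_1(L') = \tilde{c}_1(M')$ there by the (Equiv) relation.

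Next, I would invoke the moving procedure underlying the proof of Theorem \ref{thm:CML} applied to the bare cobordism cycle $[f\colon Y \to X]$. The decisive point is that this procedure is built from geometric modifications of $Y$ --- blow-ups that make $f^*D$ strictly normal crossing together with the (Sect), (Dim) and formal-group-law relations --- and each such modification depends only on $f$ and on $D$, not on any extra line bundle attached to $Y$. One thus obtains a common identity
\[
[f\colon Y \to X] = \sum_i n_i [f_i\colon Y_i \to X, L_i^{(1)}, \ldots, L_i^{(r_i)}]
\]
in $\Omega_*(X)$, with each $f_i = f \circ \pi_i$ in $\mathcal{Z}_*(X)_D$ and with $\pi_i \colon Y_i \to Y$ and $L_i^{(j)}$ prescribed by the procedure. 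Appending $\tilde{c}_1(L)$ uniformly to both sides and lifting back through $\phi_X = \theta_X^{-1}$ yields
\[
\phi_X([Y \to X, L]) = \sum_i n_i [f_i\colon Y_i \to X, L_i^{(1)}, \ldots, L_i^{(r_i)}, \pi_i^* L]
\]
inside $\Omega_*(X)_D$, and likewise for $M$ with $\pi_i^* L$ replaced by $\pi_i^* M$. Since pullback preserves algebraic equivalence, $\pi_i^* L \sim \pi_i^* M$ on each $Y_i$.

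Finally, I would apply Definition \ref{defn:Intersection*} termwise. In both cases of the definition --- whether $f_i(Y_i) \subset |D|$ or $\mathrm{Div}(f_i^* D)$ is a strict normal crossing divisor on $Y_i$ --- the output in $\Omega_{*-1}(|D|)$ is assembled by pushing forward along $f_i^D$ the action of Chern class operators for the $L_i^{(j)}$'s (and possibly $f_i^* O_X(D)$ or $[\tilde{D} \to |\tilde{D}|]$); only the trailing line bundle differs between the $L$- and $M$-summands, being the appropriate restriction of $\pi_i^* L$ versus $\pi_i^* M$. As restriction preserves algebraic equivalence, each termwise difference is exactly of the form identified in the first paragraph and hence lies in $\ker(\Omega_{*-1}(|D|) \to \Omega^{\rm alg}_{*-1}(|D|))$; summing over $i$ yields the claim. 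The hard part will be rigorously justifying the second step --- that the moving procedure of \cite[\S 6.4]{LM} can be carried out \emph{identically} for $[Y \to X, L]$ and $[Y \to X, M]$, with the trailing line bundle acting as a passive passenger transported by pullback --- which requires a careful audit of that proof to confirm every intermediate modification is driven only by $f$ and $D$.
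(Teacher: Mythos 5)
Your first and last steps are fine and coincide with the easy half of the paper's argument: if the two cycles already lie in $\Omega_*(X)_D$, then $D(-)$ applied termwise produces differences of the form $[g\colon Z \to |D|, N_1,\ldots,N_s,L'] - [g\colon Z \to |D|, N_1,\ldots,N_s,M']$ with $L' \sim M'$, which die in $\Omega^{\rm alg}_{*-1}(|D|)$. The genuine gap is your second step, and you have in effect conceded it yourself. The assertion that one may ``append $\tilde{c}_1(L)$ uniformly to both sides and lift back through $\phi_X$'' is not a legitimate operation on classes: $L$ is a line bundle on $Y$, not on $X$, so attaching it is only defined at the level of cycle representatives (via $f_*\circ\tilde{c}_1(L)$ on $\Omega_*(Y)$), not on elements of $\Omega_*(X)$ or of $\Omega_*(X)_D$. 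Consequently, from an identity $[f\colon Y\to X]=\sum_i n_i[f_i\colon Y_i\to X,\ldots]$ in $\Omega_*(X)$ you cannot formally deduce the displayed identity for $\phi_X([Y\to X,L])$ in $\Omega_*(X)_D$; you would have to re-run the entire moving argument of \cite[\S 6.4]{LM} with the passenger bundle $\pi_i^*L$ attached and check that every intermediate relation (Sect, Dim, FGL, blow-up identities) remains a relation after this modification. That is precisely the ``careful audit'' you postpone, and without it the proof is incomplete: computing $\phi_X=\theta_X^{-1}$ on a given cycle is exactly the nontrivial content of Theorem \ref{thm:CML}, and nothing in your argument pins down what $\phi_X([Y\to X,L])$ actually is.

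The paper closes this gap without any such audit by quoting a single explicit statement, \cite[Lemma 6.4.1]{LM}: there is one projective birational map $\rho\colon W \to Y\times\mathbb{P}^1$, constructed from $f$ and $D$ alone, such that
\[
D\circ\phi_X([Y\to X,L]) \;=\; (f\circ pr_1\circ\rho)_*\circ D_W\bigl([\rho^*(Y\times\{0\})\to W,\ \rho^*(L)]\bigr),
\]
with $D_W=\rho^*pr_1^*f^*(D)$, and the identical formula with $M$ in place of $L$ over the \emph{same} $W$. This is exactly the ``line bundle as passive passenger'' statement you need, already proved in \cite{LM}; one then applies the easy case to the pair $(W,D_W)$. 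So your strategy can be completed, but as written it is missing its central ingredient, and the efficient way to supply it is to invoke \cite[Lemma 6.4.1]{LM} rather than to audit the whole moving lemma.
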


\begin{proof}
We first assume that $[f\colon Y \to X, L]$ and $[f\colon Y \to X, M]$ lie in $\Omega_*(X)_D$ and show that $D(\eta) \in \ker(\Omega_{*-1}(|D|) \to \Omega^{\rm alg}_{*-1}(|D|))$. By the definition of $D(-)$ in Definition \ref{defn:Intersection*}, there is nothing to prove if $f(Y) \subset |D|$. So, suppose $f(Y) \not \subset |D|$. Then, we have 
\begin{equation}\label{eqn:Int-ps-div1}
D([f\colon Y \to X, L]) = f^D_*\{\tilde{c}_1(L|_{\widetilde{D}})([\widetilde{D} \to |D|])\} \in \Omega_{*-1}(|D|)
\end{equation}
and the similar expression holds for $D([f\colon Y \to X, M])$. On the other hand, $L \sim M$ implies that
$L|_{\widetilde{D}} \sim M|_{\widetilde{D}}$ and hence $\tilde{c}_1(L|_{\widetilde{D}})  = \tilde{c}_1(L|_{\widetilde{D}})$ as operators on $\Omega^{\rm alg}_*(|\widetilde{D}|)$. Using Proposition \ref{prop:universal lev0} and applying $f^D_*$, we get
from \eqref{eqn:Int-ps-div1} that $D([f\colon Y \to X, L]) =  D([f\colon Y \to X, M])$ in $\Omega^{\rm alg}_{*-1}(|D|)$. Equivalently, $D(\eta)  \in \ker \left(\Omega_{*-1}(|D|) \to \Omega^{\rm alg}_{*-1}(|D|)\right)$.

To complete the proof, choose a suitable projective birational map $\rho\colon W \to Y \times \mathbb{P}^1$ as in \cite[Lemma 6.4.1]{LM}. This yields a 
commutative diagram 
\begin{equation}\label{eqn:Int-ps-div2}
\xymatrix{
\Omega_*(W)_{D_{W}} \ar[r]^{D_W(-)} \ar[d]_{(f\circ pr_1 \circ \rho)_*} & \Omega_{*-1}(|D_W|) \ar[d]^{(f\circ pr_1 \circ \rho)_*} \\
\Omega_*(X)_{D} \ar[r]^{D(-)} & \Omega_{*-1}(|D|)}
\end{equation}
where $D_W = \rho^* \circ pr^*_1 \circ f^*(D)$ such that
\[
D \circ \phi_X([Y \to X, L]) = 
(f\circ pr_1 \circ \rho)_* \circ D_W \left([\rho^*(Y \times \{0\}) \to W,
\rho^*(L)]\right).
\]
A similar formula holds for $D \circ \phi_X([Y \to X, M])$. The lemma follows from this by applying what we have shown above to the pair $(W,D_W)$ in place of $(X,D)$.
\end{proof}


\begin{ack}
The authors feel very grateful to Marc Levine and the anonymous referee of the Journal of $K$-theory for useful comments on various parts of this work and for suggesting further research directions. 

JP would like to thank Juya and Damy for the constant supports and peace of mind at home during the work. He also wishes to acknowledge that part of this work was written during his visits to TIFR and the Universit\"at Duisburg-Essen in 2011. He would like to thank all those institutions that made this work possible. 

During this work, JP was partially supported by the National Research Foundation of Korea (NRF) grant (No. 2011-0001182) and Korea Institute for Advanced Study (KIAS) grant, both funded by the Korean government (MEST), and the TJ Park Junior Faculty Fellowship funded by POSCO TJ Park Foundation.

\end{ack}

\end{document}